\theoremstyle{plain}
  \newtheorem{mainthm}{Theorem}
  \newtheorem{thm}{Theorem}[section]
  \newtheorem{prop}[thm]{Proposition}
  \newtheorem{lemma}[thm]{Lemma}
  \newtheorem{cor}[thm]{Corollary}
\theoremstyle{definition}
  \newtheorem{defn}[thm]{Definition}
  \newtheorem{nota}[thm]{Notation}
  \newtheorem{remark}[thm]{Remark}
  \newtheorem{example}[thm]{Example}
\DeclareMathOperator{\Aut}{Aut}
\DeclareMathOperator{\Sym}{Sym}
\DeclareMathOperator{\SL}{SL}
\newcommand{\ZZ}{\mathbb{Z}}
\newcommand{\NN}{\mathbb{N}}
\newcommand{\FF}{\mathbb{F}}
\newcommand{\n}{\mathbf{n}}
\newcommand{\m}{\mathbf{m}}
\newcommand{\w}{\mathbf{w}}
\DeclarePairedDelimiter\gen{\langle}{\rangle}
\newcommand{\mtr}[4]{\begin{pmatrix} {#1} & {#2} \\ {#3} & {#4} \end{pmatrix}} 
\newcommand{\smtr}[4]{\left(\begin{smallmatrix} {#1} & {#2} \\ {#3} & {#4} \end{smallmatrix}\right)} 
\newcommand{\Defn}[1]{\textcolor{blue}{\textit{#1}}}
\newenvironment{axioms}[2][]%
{\begin{enumerate}[label={(#2\arabic*)},ref={#2\arabic*},leftmargin=*,#1]}%
{\end{enumerate}}
\begin{document}

\title{Moufang Twin Trees of prime order}

\author{
Matthias Grüninger\thanks{\url{matthias.grueninger@mathematik.uni-wuerzburg.de}}
\and Max Horn\thanks{\url{max.horn@math.uni-giessen.de}}
\and Bernhard Mühlherr\thanks{\url{bernhard.muehlherr@math.uni-giessen.de}}
}

\maketitle

\begin{abstract}
We prove that the unipotent horocyclic group of a Moufang twin tree of prime order
is nilpotent of class at most 2.
\end{abstract}

\section{Introduction}

The classification of spherical buildings asserts that each irreducible spherical building
of rank at least 3 is of algebraic origin.
By this we mean that it is the building of a classical group, or a semi-simple algebraic group,
or some variation thereof.
In the rank 2 case, this is no longer true; in particular,
there are free constructions of generalized polygons.
(Generalized polygons are precisely the spherical buildings
of rank 2.) In order to characterize the generalized polygons of algebraic origin,
Tits introduced the Moufang condition for spherical buildings in the 1970s \cite{Ti77}. This condition is automatically
satisfied for irreducible spherical buildings of rank at least 3. The Moufang polygons 
were classified in \cite{TW}. It follows from  this classification   
that the Moufang condition characterizes indeed the generalized polygons of algebraic origin.

In the late 1980s Ronan and Tits introduced twin buildings, which were motivated by 
the theory of Kac-Moody groups. 
Twin buildings are generalizations of spherical buildings. For the latter there is
a natural opposition relation on the set of its chambers due the existence
of a unique longest element in the finite Weyl group. Many important results about spherical
buildings (e.g.\ their classification in higher rank) rely on the presence of the opposition
relation. For Kac-Moody groups over fields there is
a natural notion of \Defn{opposite Borel groups}, even if its Weyl-group is infinite. The idea
underlying the definition of twin buildings is to translate this algebraic fact into combinatorics.
Roughly speaking the existence of an opposition relation for spherical buildings is
axiomatized by the notion of a twinning between two buildings of the
same (possibly non-spherical) type. It turns out that many important notions and concepts
from the theory of spherical buildings have indeed natural analogues in the
context of twin buildings.
In particular, the Moufang condition makes sense for twin buildings. There is
the natural question to which extent
the ``spherical'' results can be generalized to the twin case. In this paper we contribute
to this question in the context of twin trees which are precisely the non-spherical twin buildings
of rank 2.

In view of the main result of \cite{TW} it is natural to ask, whether a classification of Moufang
twin trees is feasible. Our main result can be seen as a major step towards a classification
of Moufang twin trees of prime order (i.e.\ for regular Moufang trees of valency
$p+1$ for some prime $p$). This is of course a rather small subclass of all Moufang twin trees.
As we shall explain below, however,  a classification of all Moufang twin trees seems to 
be out of reach at the moment. In view of our result, there is some hope that a classification
of the locally finite Moufang twin trees might be feasible. The latter are precisely the ones which are interesting for the
theory of lattices in locally compact groups. Indeed, using a construction of Tits in \cite{Ti89} and an important
observation of R\'emy in \cite{RemyCRAS99} one knows that locally finite  Moufang twin trees provide a large class
of lattices in locally compact groups. The examples in this class are irreducible and non-uniform lattices in the full automorphism
group of the product of two locally finite trees. Combining this with a result of Caprace and R\'emy in \cite{CR12} it
turns out that a lot of them are simple as abstract groups. To our knowledge these are the only known
examples of  lattices with these properties. A classification of all locally finite Moufang trees would in particular provide
a better understanding of these examples. 

As already announced in the previous paragraph, we now provide   
more information about  the classification problem for Moufang twin trees.
We recall first that there is the natural question whether the Moufang condition
characterizes the twin trees of algebraic origin,  i.e., the examples
provided by Kac-Moody groups and ``their variations''.
An important invariant of a Moufang twin tree is
a subgroup of its automorphism group which is called its \Defn{unipotent horocyclic group}.
In \cite{Ti89} a general construction of Moufang twin trees is given which uses
this invariant as an essential ingredient.
In \cite[Section 2]{RR06} (see also \cite[Example 67]{AR09}) this
construction was made ``concrete'' for certain parameters in order to
construct ``exotic'' examples of Moufang twin trees with abelian
unipotent horocyclic groups. In this way on gets classes of Moufang twin trees which
one would not like to call of algebraic origin.
Therefore the Moufang condition is not sufficient for characterizing the
algebraic examples. Even worse, in \cite{Ti96} it is shown that there are
uncountably many non-isomorphic twin trees of valency 3. In view of the fact
that for each value of $n$ there is at most one Moufang $n$-gon of valency 3, one has to accept
that the analogy of twin trees and generalized $n$-gons has its limitations.

On the other hand, at present it is not clear whether Moufang twin trees are ``wild'' or whether
there is a powerful structure theory for them. This problem is discussed in \cite{Ti89} and an
abstract construction given therein
provides a tool to obtain all Moufang twin trees. However, this has to be taken with a grain of salt
because the procedure requires some group theoretical parameters. Hence, the construction given in
\cite{Ti89} translates
the classification problem for Moufang twin trees into the problem of classifying these parameters.
The question whether these parameter sets can be classified is also discussed in \cite{Ti89}
and we briefly recall its outcome. First of all it turns out that a classification
of all Moufang twin trees would provide a classification of all Moufang sets. Moufang sets
have been studied intensively over the last 15 years and at present it seems that their classification
is far beyond reach. As the finite Moufang sets are known (see e.g.\ \cite{HKS72})
this difficult problem is not an obstacle if we restrict our attention to locally finite
Moufang trees. However, there is still the problem of describing all possible
commutation relations between the root groups in a Moufang twin tree for a given
pair of Moufang sets. The main result of this paper provides a major step to solve
this problem for Moufang twin trees of prime order.
The commutation relations of a Moufang twin tree are in fact
encoded in its unipotent horocyclic group mentioned before. The first step in our solution
to the problem is to introduce $\ZZ$-systems, in order to axiomatize groups which
are candidates for being the unipotent horocyclic group of a Moufang tree. We then prove
\cref{hauptsatz}, a purely group theoretical result whose statement
requires some preparation.
In order to give at least an idea about its implications for Moufang twin trees,
we state the following consequence of it. As the precise definition of a Moufang twin tree
won't be needed in the paper, we refer to \cite{RT94} for an excellent introduction.

\begin{mainthm} \label{mainthm:detailed}
The unipotent horocyclic group of a Moufang twin tree of
prime order is nilpotent of class at most 2.
\end{mainthm}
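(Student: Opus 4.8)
The plan is to deduce this statement from the purely group-theoretic \cref{hauptsatz}; the only real task is to organise the passage from geometry to group theory. Recall how the unipotent horocyclic group arises: choose a pair $(\epsilon_+,\epsilon_-)$ of opposite ends of the Moufang twin tree $\mathcal{T}=(\mathcal{T}_+,\mathcal{T}_-)$ together with a twin apartment $\Sigma$ containing them, so that the positive part $\Sigma_+$ is a bi-infinite geodesic. The roots of $\mathcal{T}_+$ lying along $\Sigma_+$ and containing $\epsilon_+$ then form a $\ZZ$-indexed chain $\cdots\supset\alpha_{-1}\supset\alpha_0\supset\alpha_1\supset\cdots$ of half-trees, and the associated root groups $U_i:=U_{\alpha_i}$ generate the unipotent horocyclic group $U$. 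The Moufang condition together with the twinning axioms turns $(U;(U_i)_{i\in\ZZ})$ into a $\ZZ$-system: each $U_i$ normalises $\langle U_k:k>i\rangle$ and $\langle U_k:k<i\rangle$, consecutive root groups commute, and more sharply $[U_i,U_j]\le\langle U_k:i<k<j\rangle$ whenever $i<j$; moreover $U$ is the ascending union of the subgroups $\langle U_i:m\le i\le n\rangle$, and the family is non-degenerate because $\mathcal{T}$ is thick. Carrying this translation out in detail is precisely the ``preparation'' referred to before the statement, and I would expect it to occupy a dedicated section.

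I would then bring in the hypothesis of prime order: valency $p+1$ with $p$ prime forces the local Moufang sets of $\mathcal{T}$ to have degree $p+1$, so that each root group $U_i$ is cyclic of order $p$ (immediate from the definition of a Moufang twin tree of prime order, or from the classification of finite Moufang sets). Hence $(U;(U_i)_{i\in\ZZ})$ is a $\ZZ$-system of prime order in the sense set up for \cref{hauptsatz}, and that theorem applies verbatim, giving that $U$ is nilpotent of class at most $2$.

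The entire weight of the argument is of course carried by \cref{hauptsatz}; granting it, the deduction above is bookkeeping with standard facts about twin trees (see \cite{RT94,Ti89}). The one non-formal point in the deduction, and the only place where the twinning --- as opposed to a single Moufang tree --- is genuinely needed, is the verification that the commutator relations have the sharp interval shape $[U_i,U_j]\le\langle U_k:i<k<j\rangle$, in particular $[U_i,U_{i+1}]=1$; it is exactly this constraint, combined with the primality of the order, that \cref{hauptsatz} turns into nilpotency of class at most $2$.
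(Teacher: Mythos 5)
Your overall strategy coincides with the paper's: translate the twin tree into group-theoretic data along a twin apartment and invoke \cref{hauptsatz}. (The paper routes this through the RGD-system $\Pi(T)$ of type $\tilde A_1$ attached to $T$ and \cref{Zsystemmotivation}, rather than working with the tree directly, but that is cosmetic.) However, your verification that the root groups form a $\ZZ$-system omits the axiom that carries the most weight, namely (ZS3): the existence of an automorphism $t$ of $U$ with $t(x_n)=x_{n+2}$, where the $x_n$ are distinguished generators of the $U_n$ chosen compatibly with $t$ (note that a $\ZZ$-system is a group together with a family of \emph{elements}, not of subgroups, so this choice matters -- the paper sets $x_{2n}:=t^n(x_0)$ and $x_{2n+1}:=t^n(x_1)$). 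You list the commutation relations, the generation property and thickness, but never produce the shift. This is not a formality: the entire proof of \cref{hauptsatz} is built on shift-invariance (the intermediate sections study shift-invariant subgroups, and the final step regards a quotient as an $\FF_p[t]$-module), and the paper explicitly remarks that (ZS3) is the non-linear analogue of conjugation by a diagonal matrix. In the paper the shift is obtained from axiom (RGD3): the elements $m_i(u)$ yield $s_0,s_1\in G$ with $U_\alpha^{s_i}=U_{r_i(\alpha)}$, and conjugation by $s_0s_1$ translates the chain of root groups by $2$; geometrically this is a hyperbolic translation along the twin apartment, and its construction uses the negative root groups, hence the twinning. For the same reason your closing claim that the interval-shaped commutator relation is ``the only place where the twinning is genuinely needed'' is inaccurate.

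A secondary, smaller point: for (ZS2) you need that the product map $U_n\times\cdots\times U_m\to\langle U_k\mid n\le k\le m\rangle$ is a \emph{bijection}, so that this group has order exactly $p^{m-n+1}$; the commutator relations only bound the order from above. The paper quotes \cite[Corollary 8.34]{AB08} for this. With (ZS3) and the injectivity of the product map supplied, the rest of your deduction matches the paper's.
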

As already mentioned, \cref{mainthm:detailed} is a consequence of our purely group theoretical
\cref{hauptsatz}. We indicate how \cref{mainthm:detailed} is deduced from 
\cref{hauptsatz} in \cref{sketch proof mainthm}.

Let us finally point out the following two remarks on \cref{mainthm:detailed}:

\begin{enumerate}
\item
As explained before, the theory of twin buildings was developed in order to
provide the appropriate structures associated to Kac-Moody groups.  Roughly
speaking, the ingredients for defining such a group consist of a generalized
Cartan matrix $A$ and a field $\FF$; the resulting group is denoted by
$G_A(\FF)$.  If the Cartan matrix $A$ is a $2 \times 2$-matrix with
non-positive determinant, then the twin building associated to $G_A(\FF)$ is
a Moufang twin tree of order $|\FF|$ whose automorphism group essentially
coincides with the (adjoint version) of $G_A$.  If $A$ is of affine type
(i.e.\ $\det(A) = 0$) then $G_A(\FF)$ can be realized as a matrix group over
$\FF(t)$. In fact, the examples given in \cref{sec trees and RGD}
correspond to Kac-Moody groups of affine type.  In most cases, however,
$G_A(\FF)$ cannot be realized as a matrix group over a field (see
\cite[Theorem 7.1]{Cap09}). 

\item
We already mentioned that there are uncountably many pairwise non-isomorphic
trivalent Moufang twin trees due to a construction of Tits given in \cite{Ti96}.
In view of our result above, one might hope that Tits' construction provides
all trivalent Moufang twin trees which would give a classification of these objects.
By modifying Tits' ideas we have constructed new examples which show
that this is definitively not the case. Nevertheless we are confident that
a classification of  Moufang twin trees of prime order is feasible. We intend
to come back to this question in a subsequent paper.

\end{enumerate}

\paragraph{Some conventions.}
\begin{itemize}
\item We consider $0$ to be a natural number, i.e., $\NN=\{0,1,2,\ldots\}$.
\item For a prime $p\in\NN$, let $\ZZ_p:=\{0,\dots,p-1\}\subset\NN$ and $\ZZ_p^*:=\{1,\dots,p-1\}\subset\NN$.
Moreover, let $\FF_p:=\ZZ/p\ZZ$ be the prime field of order $p$.
\item For a group $G$, let $G^*:=G\setminus\{1\}$.
\item For $A,B,C\leq G$, set $[A,B,C]:=[[A,B],C]$.
\item For $U\subseteq G$, let $\gen{U}$ be the subgroup of $G$ generated by $U$.
\end{itemize}

\section{Moufang twin trees and RGD-systems}
\label{sec trees and RGD}

As explained in the introduction, the classification problem for Moufang twin trees can be 
translated into a purely group theoretical classification problem. The key notion on
the group theoretic side is that of an RGD-system. We first outline what RGD-systems
are, then review the interplay between Moufang twin trees and RGD-systems.
This will provide the motivation for 
our main result and enable us to state it properly.

In \cite{Ti92} RGD-systems have been introduced by Tits in order to investigate
groups of Kac-Moody type and Moufang buildings.
The abbreviation ``RGD'' stands for ``root group data''.
The axioms for an RGD-system are somewhat technical and
we refer to \cite{AB08} and to \cite{CR09} for the general theory
of RGD-systems.

Here we are only interested in
RGD-systems of type $\tilde{A}_1$, i.e.\ in RGD-systems whose type is the Coxeter system associated with the infinite dihedral group. 
The RGD-axioms given below are adapted to this special case
in which they simplify considerably. This is because the root system $\Phi$ of type $\tilde{A}_1$
has the following concrete description. 

\begin{defn}
For each $z \in \ZZ$ we put $\epsilon_z := 1$ if $z \leq 0$ and $\epsilon_z := -1$ if $z > 0$.
We set $\Phi := \ZZ \times \{ 1,-1\}$,
$\Phi^+ := \{ (z,\epsilon_z) \mid z \in \ZZ \}$ and $\Phi^- := \Phi \setminus \Phi^+$. For $i=0,1$ we define
$r_i \in \Sym(\Phi)$ by $(z,\epsilon) \mapsto (2i-z,-\epsilon)$ and we put $\alpha_i := (i,\epsilon_i)$. Finally,
for $\alpha = (z,\epsilon) \in \Phi$ we put $-\alpha := (z,-\epsilon)$.
\end{defn}

\begin{figure}
  \centering
  \begin{tikzpicture}
	\draw[->,white!60!black,thick] (0,0) -- (5,0);
	\draw[->,white!60!black,thick] (0,0) -- (-5,0);
    \foreach\z in {-4,...,4}{
     \draw (\z,1pt) -- (\z,-3pt) node[anchor=north] {$\z$};
    }
    \foreach\z in {-4,...,0}{
     \fill (\z,1) circle (3pt);
     \draw[thick] (\z,-1) circle (3pt);
    }
    \foreach\z in {1,...,4}{
     \draw[thick] (\z,1) circle (3pt);
     \fill (\z,-1) circle (3pt);
    }
  \end{tikzpicture}
\caption{Root system of type $\tilde{A}_1$; black nodes are positive roots,
white nodes negative roots.}
\end{figure}
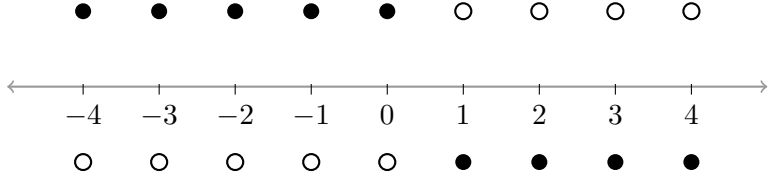

\begin{defn}
An \Defn{RGD-system of type $\tilde{A}_1$} is a triple
$\Pi=(G,(U_{\alpha})_{\alpha \in \Phi},H)$ consisting of a group $G$, a
subgroup $H$ of $G$ and a family $(U_{\alpha})_{\alpha \in \Phi}$ of
subgroups of $G$ (the \Defn{root subgroups}) such that the following holds.
\begin{axioms}{RGD}
\item For all $\alpha \in \Phi$ we have $|U_{\alpha} |>1$.
\item For all $z < z' \in \ZZ$ and all $\epsilon \in \{ 1,-1 \}$ we have
\[ [U_{(z,\epsilon)},U_{(z',\epsilon)}] \in \gen{U_{(n,\epsilon)} \mid z < n < z'}. \]
\item For $i=0,1$ there exists a function $m_i:U_{(i,1)}^* \to G$
such that for all $u \in U_{\alpha_i}^*$ and $\alpha \in \Phi$ we have
\[ m_i(u) \in U_{-\alpha_i} u U_{-\alpha_i}
\quad\text{and }\quad
m_i(u)U_{\alpha}m_i(u)^{-1} = U_{r_i(\alpha)}. \]
Moreover, $m_i(u)^{-1}m_i(v) \in H$ for all $u,v \in U_{\alpha_i}^*$.
\item For $i = 0,1$ the group $U_{-\alpha_i}$ is not contained in $\gen{U_{\alpha} \mid \alpha \in \Phi^+}$.
\item The group $G$ is generated by the family $(U_{\alpha})_{\alpha \in \Phi}$ and the group $H$.
\item The group $H$ normalizes $U_{\alpha}$ for each $\alpha \in \Phi$.
\end{axioms}
\end{defn}

\begin{remark}
We refer to \cite[Definition 7.82 and Subsection 8.6.1]{AB08} for the definition of RGD-systems of arbitrary type. 
In the following discussion ``RGD-system'' shall always
mean ``RGD-system of type $\tilde{A}_1$''.
\end{remark}

\begin{example}[\textbf{The standard example}] \label{ex std}
Let $\FF$ be a field and set 
\[ G:= \SL_2(\FF[t,t^{-1}]) \leq \SL_2(\FF(t)), \qquad
H :=\left\{ \mtr{\lambda}{0}{0}{\lambda^{-1}} \;\middle|\; 0 \neq \lambda \in \FF \right\} \leq G. \]
For each $z \in \ZZ$ we put
\[ U_{(z, 1)} := \left\{ \mtr{1}{\lambda t^z}{0}{1} \;\middle|\; \lambda \in \FF \right\},
   \qquad
   U_{(z,-1)} := \left\{ \mtr{1}{0}{\lambda t^{-z}}{1} \;\middle|\; \lambda \in \FF \right\}.
\]
We point out the following facts:
\begin{enumerate}
\item $\Pi = (G, (U_{\alpha})_{\alpha \in \Phi},H)$ is an RGD-system.
\item Let $U_{++} := \gen{ U_{(z,1)} \mid z \in \ZZ }$. Then
$U_{++} = \left\{ \smtr{1}{f}{0}{1} \;\middle|\; f \in \FF[t,t^{-1}] \right\}$ and in particular
$[U_{(z,1)},U_{(z',1)}] = 1$ for all $z,z' \in \ZZ$.
\item $\gen{ U_{(z,1)},U_{(z,-1)} }$ is isomorphic to $\SL_2(\FF)$ for all $z \in \ZZ$.
\end{enumerate}
\end{example}

\begin{remark}
The following aspect of the standard example is relevant in our context:
Let $\nu$ be a place of $\FF(t)$. Then $\SL_2(\FF(t))$ acts on the Bruhat-Tits tree
$T_{\nu}$ associated with $\nu$. We consider the two rational places $\infty$ and $0$
and set $T_+ := T_{\infty}$ and $T_- := T_0$. It is a fact that there is a twinning $\delta^*$
between $T_+$ and $T_-$ such that $G=\SL_2(\FF[t,t^{-1}])$ acts on the corresponding
Moufang twin tree $T = (T_+,T_-,\delta^*)$ (see \cite{RT94} for details).
Moreover, the unipotent horocyclic group associated with $T$ can be identified with
the group $U_{++}$ defined above.

The interplay between the RGD-system of $\SL_2(\FF[t,t^{-1}])$ and the
twin tree $T$ is actually a special case of a general correspondence
between RGD-systems and Moufang twin trees:
It follows from \cite[Proposition 8.22]{AB08} that each Moufang twin tree $T$
yields an RGD-system $\mathbf{\Pi}(T)$ in a canonical way. Conversely,
for each RGD-system $\Pi$, by \cite[Theorem 8.81]{AB08} there is
a canonical associated twin tree $\mathbf{T}(\Pi)$.
This correspondence is not one-to-one, but it can be made one-to-one
by restricting to RGD-systems of ``adjoint type''.

The following two facts about the correspondence between RGD-systems and
Moufang twin trees are important in our context.
Let $\Pi = (G,(U_{\alpha})_{\alpha \in \Phi},H)$ be an RGD-system and let ${\bf T}(\Pi)$ be the Moufang
twin tree associated with $\Pi$.

\begin{enumerate}
\item As a byproduct of the proof of \cite[Theorem 8.81]{AB08} one observes
that the Moufang twin tree $\mathbf{T}(\Pi)$ is biregular of degree
$(|U_{\alpha_0}|+1,|U_{\alpha_1}|+1)$. In analogy to the theory of
projective planes, we say a tree is of \Defn{order} $q \in \NN$ if it is a
regular tree of degree $q+1$.

\item The group $U_{++} := \gen{U_{(z,1)} \mid z \in \ZZ}$ corresponds
to the unipotent horocyclic group of ${\bf T}(\Pi)$.
\end{enumerate}
\end{remark}

\begin{example}[\textbf{The unitary example.}]  \label{ex unitary}
\cref{mainthm:detailed} in the introduction asserts that the unipotent horocyclic group of a Moufang
twin tree of order $p$ is nilpotent of class at most 2. In the following we want to
provide an example of an RGD-system $\Pi$ which can be realized as a matrix group
and such that the unipotent horocyclic group of ${\bf T}$ is non-abelian.
As this won't be used in the sequel, we omit the details.

Let $\FF$ be field with $\mathrm{char}(\FF)\neq2$. We define the following elements of $\SL_3(\FF(t))$
for $z\in\ZZ$ and $\lambda\in\FF$:
\begin{align*}
x_{2z}(\lambda) &:=
\begin{pmatrix}
  1 & -\lambda t^{z} & (-1)^{z+1} \frac{\lambda^2}{2} t^{2z} \\
    & 1              & \lambda (-t)^{z} \\
    &                & 1
\end{pmatrix}, &
x_{2z+1}(\lambda) &:= 
\begin{pmatrix}
 1 & 0 & (-1)^z \lambda t^{2z+1} \\
   & 1 & 0 \\
   &   & 1
\end{pmatrix},
\\
h(\lambda) &:= \begin{pmatrix} \lambda&&\\&1&\\&&\lambda^{-1} \end{pmatrix}.
\end{align*}
Moreover, we define the following subgroups:
\begin{align*}
  U_{(2z+1,1)} &:= \{ x_{2z+1)}(\lambda) \mid \lambda \in \FF \},
& U_{(2z  ,1)} &:= \{ x_{2z  )}(\lambda) \mid \lambda \in \FF \}, \\
  U_{(2z+1,-1)} &:= U_{(2z+1,1)}^t,
& U_{(2z,-1)} &:= U_{(2z,1)}^t, \\
H &:= \{ h(\lambda) \mid \lambda \in \FF^* \}.
\end{align*}
We set $G := \gen{ U_{\alpha} \mid \alpha \in \Phi }$.
The following can be verified by straightforward calculations.

\begin{itemize}
\item We have $H \leq G$.r
\item $\Pi = (G, (U_{\alpha})_{\alpha \in \Phi},H)$ is an RGD-system.
\item Each $U_{\alpha}$ is isomorphic to the additive group of $\FF$.
\item $U_{++} := \gen{ U_{z,1} \mid z \in \ZZ }$ is non-abelian. Indeed, while
the root groups $U_{2z+1,1}$ are central, we have for $z,z'\in\ZZ$ and $\lambda,\mu\in\FF$ that
\begin{align*}
 [ x_{4z}(\lambda),\ x_{4z'+2}(\mu) ] &= x_{2z+2z'+1}(2\lambda\mu),\\
 [ x_{4z+2}(\lambda),\ x_{4z'}(\mu) ] &= x_{2z+2z'+1}(-2\lambda\mu),\\
 [ x_{4z}(\lambda),\ x_{4z'}(\mu) ] &=  [ x_{4z+2}(\lambda),\ x_{4z'+2}(\mu) ] = 1_G.
\end{align*}
\end{itemize}
\end{example}

\section{The main result}

As consequence of the discussion in the previous section, we conclude that the classification
of Moufang twin trees of prime order $p$ is equivalent to the classification of RGD-systems
in which all $U_{\alpha}$ have order $p$. The Moufang sets of cardinality $p+1$ are
classified. Thus, the main obstacle remaining in the classification of Moufang twin trees of prime
order 
is the classification of the possible commutation relations.
In order to make this more concrete, first consider the
following basic observation about RGD-systems.  

\begin{lemma} \label{Zsystemmotivation}
Let $\Pi=(G,(U_{\alpha})_{\alpha \in \Phi},H)$ be an RGD-system. Let $X_n := U_{(n,1)}$
for each $n \in \ZZ$ and $X := \gen{X_n \mid n \in \ZZ}$. 
Then the following hold.
\begin{enumerate}
\item For all $n \leq m \in \ZZ$ the product map
$X_n \times X_{n+1} \times \dots \times X_m \to \gen{X_i \mid n \leq i \leq m}$
is a bijection.
\item There exists $t \in \Aut(X)$ such that $t(X_n) = X_{n+2}$
for all $n \in \ZZ$.
\end{enumerate}
\end{lemma}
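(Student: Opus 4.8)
For part (i), the plan is to use axiom (RGD2) together with a standard induction. Axiom (RGD2) tells us that for $z < z'$ and a fixed sign $\epsilon$ (here $\epsilon = 1$), the commutator $[U_{(z,1)}, U_{(z',1)}]$ lies in $\gen{U_{(n,1)} \mid z < n < z'}$. This is exactly the ``prenilpotent pair'' relation that makes the subgroup $\gen{X_i \mid n \leq i \leq m}$ a product of the $X_i$ in the given order. More precisely, one first shows by induction on $m - n$ that $\gen{X_i \mid n \leq i \leq m} = X_n X_{n+1} \cdots X_m$ as a set: the inclusion $\supseteq$ is clear, and for $\subseteq$ one checks the right-hand side is closed under multiplication, using (RGD2) to push any ``out of order'' generator from $X_j$ past generators $X_k$ with $k > j$ into the product $X_{j+1}\cdots X_{k-1}$ on the correct side. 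This already appears implicitly in the standard treatment of RGD-systems, e.g.\ \cite[Section 7.8]{AB08}, so I would cite that and only sketch the argument. For injectivity of the product map, one needs that the decomposition is unique; this follows from the corresponding uniqueness statement for the full product over $\Phi^+$ in an RGD-system (the group $U_+ = \gen{U_\alpha \mid \alpha \in \Phi^+}$ has unique factorization along any enumeration of $\Phi^+$ refining the standard order), again a known fact from \cite{AB08}, intersected with the subgroup generated by the relevant $X_i$.

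For part (ii), the idea is to produce the shift automorphism using the Weyl-group elements $m_i(u)$ from axiom (RGD3). The reflections $r_0$ and $r_1$ act on $\Phi$ by $r_i: (z,\epsilon) \mapsto (2i - z, -\epsilon)$, so the composite $r_0 r_1$ sends $(z,\epsilon) \mapsto r_0(2 - z, -\epsilon) = (z - 2, \epsilon)$, i.e.\ it is the translation by $-2$ on the $\ZZ$-coordinate (and similarly $r_1 r_0$ translates by $+2$). Now pick any $u_0 \in U_{\alpha_0}^*$ and $u_1 \in U_{\alpha_1}^*$, set $n_i := m_i(u_i)$, and let $w := n_1 n_0 \in G$. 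Conjugation by $w$ sends $U_\alpha$ to $U_{r_1 r_0(\alpha)} = U_{\alpha + (2,0)}$ by the second displayed relation in (RGD3); in particular it sends $X_n = U_{(n,1)}$ to $U_{(n+2,1)} = X_{n+2}$ for every $n \in \ZZ$. Hence conjugation by $w$ normalizes $X = \gen{X_n \mid n \in \ZZ}$ and restricts to an automorphism $t$ of $X$ with $t(X_n) = X_{n+2}$, which is what we want. (One could equally take $w = n_0 n_1$ to get the shift by $-2$; its inverse then gives $+2$.)

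The main obstacle I anticipate is not conceptual but bookkeeping: in part (i) one must be careful that (RGD2) is stated with the commutator \emph{contained in} (not necessarily equal to) the intermediate product, and that the product of the intervening groups is taken in a consistent order, so the closure-under-multiplication argument has to be organized as an induction that simultaneously maintains the ordered-product form; and for injectivity one must correctly cite or re-derive the uniqueness of factorization, since $X$ is only the ``positive half'' of the unipotent group and one should make sure the ambient uniqueness restricts properly. In part (ii) the only subtlety is checking that conjugation by $w$ preserves $X$ rather than merely permuting the $X_n$ inside $G$ — but this is immediate once we know $w X_n w^{-1} = X_{n+2} \subseteq X$ for all $n$, together with $w^{-1} X_n w = X_{n-2} \subseteq X$, so that conjugation by $w$ is a well-defined bijection $X \to X$.
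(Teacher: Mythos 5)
Your proposal is correct and follows essentially the same route as the paper: part (i) is delegated to the standard unique-factorization result for RGD-systems in \cite{AB08} (the paper cites Corollary 8.34(i) there), and part (ii) constructs the shift by conjugating with a product of the Weyl elements $m_i(u)$ from (RGD3), exactly as in the paper. The sign bookkeeping for $r_1r_0$ versus $r_0r_1$ and the check that conjugation by $w$ preserves $X$ are both handled correctly.
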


\begin{proof}
Assertion (i) follows from Assertion (i) of Corollary 8.34 in \cite{AB08}. Let $i =0,1$. Using the
function $m_i$ from (RGD3), we can construct $s_i \in G$ such that
$U_{\alpha}^{s_i} = U_{r_i(\alpha)}$ for all $\alpha \in \Phi$.
Then the mapping $t:X \to X, x \mapsto x^{s_0s_1}$ has the required properties.
\end{proof}

As we are dealing with Moufang twin trees of prime order, we have to consider
RGD-systems in which all the $U_{\alpha}$ have order $p$ for some prime number $p$.
Let $\Pi=(G,(U_{\alpha})_{\alpha \in \Phi},H)$ be such an RGD-system,
and let $X$, $(X_n)_{n \in \ZZ}$ and $t$ be as in the previous lemma.
By choosing $1 \neq x_i \in U_{(i,1)}$ for $i=0,1$ and setting
$x_{2n} := t^n(x_0)$ and $x_{2n+1} := t^n(x_1)$, we obtain
a pair $(X,(x_n)_{n \in \ZZ})$ conforming to the following definition.

\begin{defn} \label{def zsys}
Let $p$ be a prime.
A \Defn{$\ZZ$-system (of order $p$)} is a pair $(X,(x_n)_{n \in \ZZ})$ consisting of a group $X$ and
a family $(x_n)_{n \in \ZZ}$ of elements in $X$ such that the following conditions
are satisfied.
\begin{axioms}{ZS}
\item $X = \gen{x_n \mid n \in \ZZ}$.
\item For all $n \leq m \in \ZZ$ the group $\gen{x_k \mid n \leq k \leq m}$ is of order $p^{m-n+1}$.
\item There exists an automorphism $t$ of $X$ such that $t(x_n) = x_{n+2}$
for all $n \in \ZZ$.
\end{axioms}
\end{defn}

\begin{example}
Let $p$ be a prime and let $\FF:=\FF_p$.
\begin{enumerate}
\item 
Let everything be as in \cref{ex std}. For $n\in\ZZ$ let $u_n:=\smtr{1}{t^n}{0}{1}$. Then
$(U_{++},(u_n)_{n \in \ZZ})$ is a $\ZZ$-system of order $p$.
Indeed, the map
\[ U_{++}\to U_{++},\ g \mapsto g^\sigma,\quad\text{ where }\quad
\sigma:=\mtr{t^{-1}}{0}{0}{t}\]
is an automorphism of $U_{++}$ which maps $u_n$ to $u_{n+2}$ for all $n\in\ZZ$.

\item
Let everything be as in \cref{ex unitary}. For $n\in\ZZ$ let $u_n:=x_{n}(1_{\FF_p})$.
Then $(U_{++},(u_n)_{n \in \ZZ})$ is a $\ZZ$-system of order $p$.
Indeed, the map
\[ U_{++}\to U_{++},\ g \mapsto g^\sigma,\quad\text{ where }\quad
\sigma:=\begin{pmatrix} t^{-1}&&\\&1&\\&&-t \end{pmatrix}\]
is an automorphism of $U_{++}$ which maps $u_n$ to $u_{n+2}$ for all $n\in\ZZ$.
\end{enumerate}
\end{example}

We already mentioned in the introduction that Tits gave a construction of
uncountably many pairwise non-isomorphic trivalent twin trees. The idea
behind his construction can be generalized to produce uncountably many
non-isomorphic $Z$-systems of order $p$ for each prime $p$. It is conceivable
that only very few of them can be realized as matrix groups. In a sense,
Axiom (ZS3) requires an analogue of the conjugation by a diagonal matrix in the
non-linear context.

We are now in the position to state our main result, which we prove in \cref{sect hauptbeweis}.

\begin{thm} \label{hauptsatz}
Let $(X,(x_n)_{n \in \ZZ})$ be a $\ZZ$-system of prime order.
Then $X$ is nilpotent of class at most 2.
\end{thm}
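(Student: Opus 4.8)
The plan is to work entirely inside the group $X$ and exploit the two structural constraints provided by a $\ZZ$-system: the ``triangular'' product decomposition from (ZS2), and the shift automorphism $t$ from (ZS3). Write $X_n:=\gen{x_n}$, so each $X_n$ has order $p$, and for $n\le m$ write $X_{[n,m]}:=\gen{x_k\mid n\le k\le m}$, which by (ZS2) has order $p^{m-n+1}$ and (by the bijectivity of the product map, cf.\ \cref{Zsystemmotivation}(i) in the RGD setting, or directly from the order count) equals $X_n X_{n+1}\cdots X_m$ as a set. The first observation I would record is that $X_{[n,m]}$ is \emph{subnormal} in $X$ with successive quotients of order $p$; in particular every $X_{[n,m]}$ is a finite $p$-group, and $X$ is locally a finite $p$-group. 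A cleaner reformulation: setting $Y_n:=X_{[n,\infty)}=\gen{x_k\mid k\ge n}$ and $Y^n:=X_{(-\infty,n]}=\gen{x_k\mid k\le n}$, the decomposition shows $X=Y_n\cdot(\text{finite piece})\cdot Y^m$, and $Y_{n}\trianglelefteq Y_{n-1}$ with $Y_{n-1}/Y_n$ cyclic of order $p$. So $X$ carries a descending (and ascending) filtration by the $Y_n$ and $Y^n$, each step of index $p$, and this filtration is permuted by $t$ via $t(Y_n)=Y_{n+2}$, $t(Y^n)=Y^{n+2}$.

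The heart of the argument should be to control the commutator $[x_n,x_m]$ for $m>n$. By (ZS2) applied to the interval $[n,m]$ together with the product decomposition, $[x_n,x_m]$ lies in $X_{[n+1,m-1]}$ (this is exactly the content used implicitly via (RGD2) in the motivating lemma; for a $\ZZ$-system it follows from the normality statement $X_{[n,m]}\supseteq X_{[n+1,m]}\trianglelefteq X_{[n,m]}$ and a parallel statement from the other end, forcing $[x_n,x_m]\in X_{[n+1,m]}\cap X_{[n,m-1]}=X_{[n+1,m-1]}$). Then I would run an induction on the ``width'' $m-n$ to show the much sharper statement that $[x_n,x_m]$ is \emph{central} in $X$ whenever $m-n\ge 2$, and that for $m=n+1$ the commutator $[x_n,x_{n+1}]$ is at worst one level deeper. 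Concretely: suppose we have already shown that all ``long'' commutators are central. Using the Hall--Witt identity (the three-subgroup lemma) on $x_{n-1},x_n,x_{m}$ and the shift automorphism to move intervals around, one pins down where $[x_{n-1},x_n,\text{anything}]$ can live; combined with the finiteness of each $X_{[a,b]}$ and the fact that a finite $p$-group with a cyclic filtration of this shape has tightly constrained lower central series, this should collapse $\gamma_3(X)$ to the trivial group. The key leverage of (ZS3) is that it makes the problem ``translation invariant'': it suffices to understand $[x_0,x_m]$ and $[x_1,x_m]$ for all $m$, and any relation proved for one index $n$ automatically holds for all $n$.

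In more detail, the inductive step I expect to be decisive is the following. Fix $m$ and consider the subgroup $V_m:=X_{[0,m]}$, a finite $p$-group of order $p^{m+1}$ that is generated by $x_0,\dots,x_m$ with $\gen{x_1,\dots,x_m}\trianglelefteq V_m$ of index $p$ and $\gen{x_0,\dots,x_{m-1}}\trianglelefteq V_m$ of index $p$. A finite $p$-group that is the product of two normal subgroups each of index $p$ has derived subgroup contained in their intersection, and iterating, one sees $\gamma_2(V_m)\le X_{[1,m-1]}$ and more generally $\gamma_{k+1}(V_m)\le X_{[k,m-k]}$ as long as the interval is nonempty. This already gives nilpotency of each $V_m$, but with class growing with $m$; the point of (ZS3) is to prevent that growth. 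Using $t$ we get $\gamma_{k+1}(X)\cap V_m = t^{j}(\gamma_{k+1}(X)\cap V_{m})$ translated appropriately, and by choosing the shift to push a putative nontrivial element of $\gamma_3(X)$ into a region where the interval estimate forces it into $X_{[k,m-k]}$ for unboundedly many $k$ — hence into $\bigcap_k X_{[k,m-k]}=\{1\}$ — we conclude $\gamma_3(X)=\{1\}$, i.e.\ class at most $2$. The main obstacle, and the place where real care is needed, is making this ``sliding'' argument rigorous: a priori $\gamma_3(X)$ need not be contained in any single finite $V_m$, so one must first localize (every element of $\gamma_3(X)$ is a product of commutators involving finitely many of the $x_n$, hence lies in some $V_{[a,b]}$), then apply the shift to an appropriately chosen conjugate, and finally invoke the interval intersection being trivial. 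Handling the boundary commutators $[x_n,x_{n+1}]$ — which the interval estimate does not immediately push into the center — is the second delicate point, and I expect it to be dispatched by a Hall--Witt computation together with (ZS3), exactly as in the unitary example where $[x_{4z},x_{4z'+2}]=x_{2z+2z'+1}(\pm2\lambda\mu)$ is itself central.
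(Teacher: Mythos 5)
Your proposal has a genuine gap at its central step. The claimed estimate $\gamma_{k+1}(V_m)\le X_{[k,m-k]}$ does not follow by "iterating" the fact that the derived subgroup of a product of two normal index-$p$ subgroups lies in their intersection. That fact gives $\gamma_2(V_m)=V_m'\le X_{[1,m-1]}$, but $\gamma_3(V_m)=[\gamma_2(V_m),V_m]$, \emph{not} $\gamma_2(X_{[1,m-1]})$: forming the next term of the lower central series means commutating with the boundary generators $x_0$ and $x_m$, which lie outside the shrunken interval. Since $X_{[k,m]}$ is not normal in $V_m$ for $k\ge 2$ (e.g.\ $x_0x_2x_0^{-1}\in x_2\gen{x_1}$), the best one gets for free is $\gamma_{k+1}(V_m)\le X_{[1,m-1]}$ for all $k$, with no further shrinkage. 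Without two-sided shrinkage the intersection $\bigcap_k X_{[k,m-k]}$ never enters, and the "sliding" argument has nothing to slide into. That argument has a second defect you partly acknowledge but do not repair: applying $t^{j}$ to an element of $\gamma_3(X)\cap V_m$ produces a \emph{different} element lying in the translated interval $V_{[2j,m+2j]}$; it does not place the \emph{same} element in $X_{[k,m-k]}$ for unboundedly many $k$. (A small further sign of trouble: by (ZS2) the group $X_{n,n+1}$ has order $p^2$ and is generated by two elements of order $p$, so $[x_n,x_{n+1}]=1$ always; these are not "one level deeper" boundary cases needing Hall--Witt.)

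More broadly, the theorem is not provable by a purely local interval computation of this kind; that is essentially why the paper's proof is long. The actual argument sets $Y=\gamma_3(X)$ and shows $[Y,X]=Y$ (via the identity $[X',X,X]=[X',X]$ for shift-invariant normal subgroups of infinite index), which requires first proving that $X/X'$ is \emph{infinite} — itself a nontrivial theorem using the derived series, \cref{non perfect}, and \cref{nilpotent-extension}. It then develops a structure theory of shift-invariant subgroups (those of infinite index are elementary abelian and are again $\ZZ$-systems, \cref{MaxProposition5.7(3)}), produces a one-sided truncation $Y_{n,\infty}$ that is normal in $X$ (\cref{normal}), and derives a contradiction by viewing a quotient as a finitely generated free $\FF_p[t^{-1}]$-module, where a proper nontrivial submodule has finite index, against \cref{[VG]<V}. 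None of this global, module-theoretic machinery is present in your plan, and the local estimates you propose in its place are not strong enough to substitute for it.
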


\begin{remark}[Sketch of the proof of \cref{mainthm:detailed}]
\label{sketch proof mainthm}
Let ${\bf T}$ be a Moufang twin tree of order $p$ and
let $\Pi(T) = (G, (U_{\alpha})_{\alpha \in \Phi},H)$ be the RGD-system associated
with $T$. As $T$ is of order $p$, each $U_{\alpha}$ has order $p$.
By \cref{Zsystemmotivation} we therefore obtain a $\ZZ$-system $(X,(x_n)_{n \in \ZZ})$
of order $p$, and the unipotent horocyclic group of $T$ coincides with $X$.
Thus \cref{mainthm:detailed} is a consequence of  \cref{hauptsatz}
\end{remark}

\section{$\ZZ$-systems}
\label{ZS-Section}

For the rest of this paper, we assume that $p$ is a prime and that
$\Theta = (X,(x_n)_{n \in \ZZ})$ is a $\ZZ$-system of order $p$,
together with an automorphism $t\in\Aut(X)$ as in (ZS3), the \Defn{shift automorphism} of $\Theta$.
In the following lemma we collect some basic properties of $\ZZ$-systems.

\begin{defn}
For $n\leq m\in\ZZ$, we set
\begin{align*}
X_{n,m} &:=\gen{x_k \mid n \leq k \leq m}, &
X_{-\infty,m}&:=\gen{x_k \mid k \leq m}, &
X_{n,\infty}&:=\gen{x_k \mid n \leq k}.
\end{align*}
\end{defn}

\begin{lemma}
The following statements are true.
\begin{axioms}[start=4]{ZS}
\item For each $n \in \ZZ$ we have $x_n^p = 1 \neq x_n$.
\item For $n < m \in \ZZ$ we have $[x_n,x_m] \in X_{n+1,m-1}$.
\item For each $x \in X^*$ there exist $n \leq m \in \ZZ$ and
$e_n,\ldots,e_m \in \ZZ_p$ such that $x = x_n^{e_n}\cdots x_m^{e_m}$,
and both $e_n\neq0$ and $e_m\neq 0$.
Moreover, $n,m,e_n,\ldots,e_m$ are uniquely determined by $x$.
\end{axioms}
\end{lemma}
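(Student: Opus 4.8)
The plan is to derive all three statements from the defining axioms (ZS1)--(ZS3), treating them in the order (ZS6), (ZS4), (ZS5) since the normal form is the backbone. First I would observe that (ZS2) gives, for each $n \leq m$, that $\lvert X_{n,m}\rvert = p^{m-n+1}$, and in particular each singleton $\langle x_n\rangle$ has order exactly $p$; this is already half of (ZS4), and since $X$ has order $p$ in rank $1$, $x_n$ is a nontrivial element of order $p$, giving $x_n^p = 1 \neq x_n$. For (ZS6), fix $x \in X^*$. By (ZS1) and the fact that the generating set is $\{x_k \mid k \in \ZZ\}$, $x$ lies in some finitely generated subgroup, hence in $X_{a,b}$ for suitable $a \leq b$. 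The key counting step: I claim the product map $X_{a,a} \times X_{a+1,a+1}\times\cdots\times X_{b,b}\to X_{a,b}$, $(x_a^{c_a},\dots,x_b^{c_b})\mapsto x_a^{c_a}\cdots x_b^{c_b}$, is a bijection. Surjectivity follows because the image is a subset of $X_{a,b}$ that I must show is all of it; the clean way is induction on $b-a$, using that $X_{a,b-1}$ is normal in $X_{a,b}$ (why: conjugating $x_k$ for $k \le b-1$ by $x_b$ stays in $X_{a,b}$, and a dimension/order count via (ZS2) forces it into $X_{a,b-1}$ — this is exactly the subtle point, see below), so $X_{a,b} = X_{a,b-1}\langle x_b\rangle$ and every element is $w x_b^{c_b}$ with $w \in X_{a,b-1}$; then apply the inductive normal form to $w$. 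Injectivity is then immediate by counting: the domain has $p^{b-a+1}$ elements, which by (ZS2) equals $\lvert X_{a,b}\rvert$, so a surjection between finite sets of equal size is a bijection. Uniqueness of $n,m,e_n,\dots,e_m$ with $e_n \ne 0 \ne e_m$: given two such expressions, extend both to a common window $[a,b]$ by padding with zero exponents, invoke the bijectivity just proved to match exponents termwise, and then the minimality conditions $e_n \ne 0$, $e_m \ne 0$ pin down $n$ and $m$.

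For (ZS5), let $n < m$ and consider $[x_n, x_m]$. Since $\langle x_n, x_m\rangle \le X_{n,m}$ and $X_{n+1,m-1} \trianglelefteq X_{n,m}$ (again by the normality fact), work in the quotient $X_{n,m}/X_{n+1,m-1}$. There the images $\bar x_k$ for $n < k < m$ are trivial, and by the order count this quotient has order $p^2$ and is generated by $\bar x_n$ and $\bar x_m$ — but a group of order $p^2$ is abelian, so $[\bar x_n, \bar x_m] = 1$, i.e.\ $[x_n, x_m] \in X_{n+1,m-1}$. (In the edge case $m = n+1$ the claim reads $[x_n,x_{n+1}] \in X_{n+1,n}$, the trivial subgroup, which says $\langle x_n, x_{n+1}\rangle$ has order $p^2$ hence is abelian — this too is just the order count plus ``$p^2 \Rightarrow$ abelian''.)

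\textbf{The main obstacle} is the normality claim $X_{a,b-1} \trianglelefteq X_{a,b}$ (equivalently $X_{n+1,m-1}\trianglelefteq X_{n,m}$), because the axioms (ZS1)--(ZS3) as stated do not obviously give any control over commutators $[x_k, x_b]$ for $k < b$; a priori such a commutator could involve generators $x_j$ with $j > b$ or $j < a$, which would wreck the induction. I expect this to be handled by a careful use of the order bound in (ZS2) together with (ZS3): the shift $t$ identifies $X_{n,m}$ with $X_{n+2,m+2}$, so it suffices to understand the nesting $X_{n,m-1} \le X_{n,m}$ for one value; and then one argues that $X_{n,m-1}\langle x_m\rangle$ is a subgroup of order at most $p\cdot p^{m-n}=p^{m-n+1}=\lvert X_{n,m}\rvert$ containing the generators, hence equals $X_{n,m}$, and a subgroup of index $p$ is normal. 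Making the ``order at most'' estimate rigorous without already knowing normality is the delicate part — one proves by induction on $m-n$ that $X_{n,m}$ is a union of at most $p$ cosets of $X_{n,m-1}$, using that conjugation by $x_m$ permutes such cosets because $[x_m, X_{n,m-1}] \subseteq X_{n,m}$ and the coset count is bounded. I would expect the authors to invoke (RGD2)/Corollary 8.34 of \cite{AB08} here, but in the purely $\ZZ$-system setting the argument reduces to this finite-index bookkeeping, which is the one genuinely non-formal step.
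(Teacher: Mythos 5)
Your skeleton coincides with the paper's (order counts from (ZS2), normality of the one-step subgroups, a polycyclic normal form for (ZS6), an abelian-quotient argument for (ZS5)), but you have misdiagnosed the ``main obstacle'', and as written that step is a genuine gap. The normality $X_{a,b-1}\unlhd X_{a,b}$ needs no control over commutators and no coset bookkeeping: $X_{a,b-1}$ is a subgroup of $X_{a,b}$ because its generating set is a subset, and (ZS2) hands you the orders of \emph{both} groups outright, namely $p^{b-a}$ and $p^{b-a+1}$. Hence the index is $p$, and a subgroup of index $p$ in a finite $p$-group is normal. Your worry that $[x_k,x_b]$ might involve generators outside the window is beside the point --- the commutator of two elements of $X_{a,b}$ lies in $X_{a,b}$ by definition of that subgroup, and normality of an index-$p$ subgroup is obtained without computing any commutator. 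Your proposed repair also has the logic backwards: you try to show first that $X_{n,m-1}\gen{x_m}$ is a subgroup in order to bound the index, whereas the index is known \emph{a priori} from (ZS2) and the decomposition $X_{n,m}=X_{n,m-1}\gen{x_m}$ is a consequence. This is exactly how the paper proceeds: the chain $X_{n,m}\rhd X_{n+1,m}\rhd\dots\rhd X_{m,m}\rhd 1$ with index-$p$ steps exhibits $x_n,\dots,x_m$ as a polycyclic generating sequence, and (ZS6) follows.

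A second, smaller gap is in your (ZS5) argument: you pass to $X_{n,m}/X_{n+1,m-1}$ and justify $X_{n+1,m-1}\unlhd X_{n,m}$ ``again by the normality fact'', but that subgroup has index $p^2$, so the index-$p$ fact does not apply directly. It can be repaired: by the uniqueness in (ZS6) one has $X_{n+1,m-1}=X_{n+1,m}\cap X_{n,m-1}$, and both factors are normal of index $p$ (stripping generators from the left, respectively from the right), so the intersection is normal and the quotient has order $p^2$, whence your argument goes through. The paper sidesteps the quotient: the two index-$p$ normal subgroups give $X_{n,m}'\leq X_{n+1,m}$ and $X_{n,m}'\leq X_{n,m-1}$, hence $[x_n,x_m]\in X_{n+1,m}\cap X_{n,m-1}=X_{n+1,m-1}$.
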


\begin{proof}
(ZS4) is immediate from (ZS2) with $m=n$.
Now recall that a subgroup of index $p$ in a finite $p$-group is normal.
Hence for any $n\leq m\in\ZZ$, we obtain the following normal series,
where each group has index $p$ in the preceding one:
\[ X_{n,m}\rhd X_{n+1,m}\rhd \dots \rhd X_{m,m}\rhd 1.\]
Thus $x_n,\dots,x_m$ form a polycyclic generating sequence of $X_{n,m}$.
Then (ZS6) follows.
From this it also follows that $X_{n,m}'\leq X_{n+1,m}$.
By a symmetric argument $X_{n,m}'\leq X_{n,m-1}$ and hence (ZS5) follows.
\end{proof}

\begin{defn}
Let $x \in X^*$. By (ZS6) there exist unique $n \leq m \in\ZZ$
and $e_n,\ldots,e_m \in \ZZ_p$ such that
$e_n\neq0\neq e_m$ and $x = x_n^{e_n}\cdots x_m^{e_m}$.
This is the \Defn{normal form} of $x$, and we set
\[
\n(x) := n, \quad
\m(x) := m.
\]
The \Defn{width} of $x\in X^*$ is $\w(x) := m-n+1$. Additionally
we set $\w(1):=0$, $\n(1):=\infty$ and $\m(1):=-\infty$.
\end{defn}

Finally we point out some useful direct consequences of (ZS5) and (ZS6),
which we use extensively in the sequel.

\begin{lemma}
Let $x,y\in X^*$.
\begin{enumerate}[ref={\thethm (\roman*)}]
\item \label[lemma]{startindex}
Let $k \in \ZZ$ such that $\n(x)\neq k$. Then $\n(x_kx) = \min(k, \n(x))$.
\item \label[lemma]{cancel start} If $\n(x) = \n(y)$, then 
there is $\lambda\in\ZZ_p^*$ such that
$\n(x)<\n(y^\lambda x)$ and $\w(y^\lambda x) < \max(\w(x),\w(y))$.
\item \label[lemma]{cancel end} If $\m(x) = \m(y)$, then 
there is $\lambda\in\ZZ_p^*$ such that
$\m(y^\lambda x)<\m(x)$ and $\w(y^\lambda x) < \max(\w(x),\w(y))$.
\item \label[lemma]{cancel power} $\w(x^p)<\w(x)$.
\end{enumerate}
\end{lemma}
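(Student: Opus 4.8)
The plan is to distill everything from one bookkeeping fact about normal forms. For $g\in X^*$ with $n:=\n(g)$, $m:=\m(g)$ and normal form $g=x_n^{e}\cdots x_m^{f}$ (so $e,f\in\ZZ_p^*$), I would first record that $g = x_n^{e}\,g^{+}$ with $g^{+}\in X_{n+1,\infty}$ and, dually, $g = g^{-}x_m^{f}$ with $g^{-}\in X_{-\infty,m-1}$; both are immediate from the definition of the normal form. Next I would note that $X_{n+1,\infty}$ is invariant under conjugation by $x_n$ --- not literally (ZS5), but a consequence of the argument already used in the previous lemma: for each $M$ the group $X_{n+1,M}$ has index $p$ in $X_{n,M}$, hence is normal, hence $x_n$-invariant, and $X_{n+1,\infty}=\bigcup_M X_{n+1,M}$; symmetrically $X_{-\infty,m-1}$ is $x_m$-invariant. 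From these two facts a one-line induction (repeatedly sliding a power of $x_n$ to the left past factors in $X_{n+1,\infty}$) gives the two rules I will actually use: if $\n(g)=\n(h)=n$ with leading exponents $e,e'$ then $gh=x_n^{e+e'}w$ for some $w\in X_{n+1,\infty}$, and $g^{k}=x_n^{ke}w$ for some $w\in X_{n+1,\infty}$; dually for $\m$ and trailing exponents. Since $p$ is prime this already yields $\n(g^\lambda)=\n(g)$ and $\m(g^\lambda)=\m(g)$ for $\lambda\in\ZZ_p^*$, and (by uniqueness of normal forms) that any element of the form $x_n^{e}w$ with $e\in\ZZ_p^*$ and $w\in X_{n+1,\infty}$ has $\n=n$.

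For (i), I would split on $k$ versus $n:=\n(x)$. If $k<n$, then $x_kx$ is, after padding the empty index range $k<i<n$ with zero exponents, a normal form with first index $k$, so $\n(x_kx)=k=\min(k,\n(x))$. If $k>n$, write $x=x_n^{e}x^{+}$ with $x^{+}\in X_{n+1,\infty}$; as $x_k\in X_{n+1,\infty}$ and this subgroup is $x_n$-invariant, $x_kx=x_n^{e}\bigl(x_n^{-e}x_kx_n^{e}\bigr)x^{+}$ with the bracketed product lying in $X_{n+1,\infty}$, so $\n(x_kx)=n=\min(k,\n(x))$.

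For (ii), set $n:=\n(x)=\n(y)$ and let $e,e'\in\ZZ_p^*$ be the leading exponents of $x$ and $y$. By the product rule, $y^{\mu}x=x_n^{\mu e'+e}w$ with $w\in X_{n+1,\infty}$ for every $\mu$, so the choice $\lambda:=-e(e')^{-1}\in\ZZ_p^*$ gives $y^{\lambda}x\in X_{n+1,\infty}$, i.e. $\n(y^{\lambda}x)\ge n+1>\n(x)$. Also $y^{\lambda}x\in X_{-\infty,M}$ with $M:=\max(\m(y^{\lambda}),\m(x))=\max(\m(y),\m(x))$, so $\m(y^{\lambda}x)\le M$; hence if $y^{\lambda}x\ne1$ then $\w(y^{\lambda}x)\le M-(n+1)+1=\max(\w(x),\w(y))-1$, and if $y^{\lambda}x=1$ then $\w(y^{\lambda}x)=0<\max(\w(x),\w(y))$. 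Part (iii) is the mirror image: kill the trailing exponent at $m:=\m(x)=\m(y)$ to get $\m(y^{\lambda}x)\le m-1$, and use $\n(y^{\lambda}x)\ge\min(\n(x),\n(y))$ to conclude the width estimate. Part (iv) is the power rule applied at both ends of $x$: with $n=\n(x)$ and $m=\m(x)$ one gets $x^{p}=x_n^{pe}w\in X_{n+1,\infty}$ and dually $x^{p}\in X_{-\infty,m-1}$, so $\n(x^{p})\ge n+1$ and $\m(x^{p})\le m-1$, giving $\w(x^{p})\le\w(x)-2<\w(x)$ (or $\w(x^{p})=0$ when $x^{p}=1$).

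There is no real obstacle here; the work is entirely in setting up the leading/trailing-exponent bookkeeping and being careful with the degenerate cases (the empty index range in (i), and $y^{\lambda}x=1$, which has width $0$). The only slightly non-obvious input is the $x_n$-invariance of $X_{n+1,\infty}$, which I would justify as above from the normal-series argument rather than from (ZS5) directly.
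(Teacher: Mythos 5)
Your proof is correct and complete. The paper itself offers no proof of this lemma --- it is stated as a collection of ``useful direct consequences of (ZS5) and (ZS6)'' --- so your write-up simply supplies the routine verification the authors omit, and it does so along exactly the intended lines: the leading/trailing-exponent bookkeeping via the normal form, with the only non-trivial input being the $x_n$-invariance of $X_{n+1,\infty}$ (equivalently of each $X_{n+1,M}$ as an index-$p$, hence normal, subgroup of $X_{n,M}$), which is the same normal-series observation the paper already used to derive (ZS5) and (ZS6). Your handling of the degenerate cases ($y^\lambda x=1$, the padding with zero exponents in (i), and the prime order of the generators forcing $x_n^{\lambda e'+e}=1$) is careful and accurate.
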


\section{Abelian $\ZZ$-systems}

In this section we establish a criterion for proving that a $\ZZ$-system is abelian,
stated as \cref{abelian<=>single-shift-inv}.

\begin{defn}
The \Defn{lower cutoff} of $\Theta$ is defined as 
\[
\ell(\Theta) := 
\begin{cases}
\infty & \text{ if } X \text{ is abelian}, \\
\min \{ |m-n| \mid [x_n,x_m] \neq 1 \} & \text{ if } X \text{ is non-abelian}.
\end{cases}
\]
\end{defn}

Recall that by (ZS3) there is an automorphism $t$ of $X$ mapping $x_n$ onto $x_{n+2}$ for all $n \in \ZZ$.

\begin{lemma} \label{MaxProposition5.4(1)}
Let $X$ be non-abelian and let $n := \ell(\Theta)$ be the lower cutoff of $\Theta$.
If $[x_0,x_n] \neq 1$, then $[x_1,x_{n+1}] = 1$; if $[x_1,x_{n+1}] \neq 1$, then $[x_0,x_n] = 1$.
\end{lemma}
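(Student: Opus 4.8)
The plan is to prove the equivalent (contrapositive) assertion that $[x_0,x_n]\neq 1$ and $[x_1,x_{n+1}]\neq 1$ cannot hold simultaneously; this yields both halves of the statement at once. So suppose both hold, and derive a contradiction. The minimality of $n=\ell(\Theta)$ means that $x_i$ and $x_k$ commute whenever $|i-k|<n$; hence every window $X_{a,b}$ with $b-a<n$ is elementary abelian of order $p^{b-a+1}$, and applying the shift $t$ (and $t^{-1}$) to $[x_0,x_n]$ and to $[x_1,x_{n+1}]$ gives $[x_i,x_{i+n}]\neq 1$ for \emph{every} $i\in\ZZ$. Since $[x_0,x_n]\in X_{1,n-1}$ by (ZS5) and $X_{1,n-1}$ is elementary abelian, we may write $[x_0,x_n]=x_1^{c_1}x_2^{c_2}\cdots x_{n-1}^{c_{n-1}}$ with $c_k\in\ZZ_p$, and the goal is to show that every $c_k$ vanishes, contradicting $[x_0,x_n]\neq 1$.

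The engine of the proof is the claim that $[[x_0,x_n],x_{n+j}]=1$ for every $j$ with $1\leq j\leq n-1$. For this, note that $[x_0,x_{n+j}]\in X_{1,n+j-1}$ by (ZS5); since $n+j-1\leq 2n-2$, the generator $x_n$ commutes with each of $x_1,\dots,x_{n+j-1}$ (all at distance $<n$ from index $n$) and also with $x_{n+j}$ (distance $j<n$), hence $x_n$ commutes with $x_{n+j}^{\,x_0}=x_{n+j}\,[x_0,x_{n+j}]^{-1}$. Conjugating this relation by $x_0^{-1}$ shows that $x_{n+j}$ commutes with $x_n^{\,x_0^{-1}}=x_n\,[x_0,x_n]$ (using that $x_0$ commutes with $[x_0,x_n]\in X_{1,n-1}$); and since $x_{n+j}$ commutes with $x_n$, it therefore commutes with $[x_0,x_n]$, which is the claim.

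I would then show $c_j=0$ by induction on $j=1,\dots,n-1$. Assuming $c_1=\dots=c_{j-1}=0$, we have $[x_0,x_n]=x_j^{c_j}x_{j+1}^{c_{j+1}}\cdots x_{n-1}^{c_{n-1}}$; expanding $[[x_0,x_n],x_{n+j}]$ with standard commutator identities, and using that $x_{n+j}$ commutes with $x_{j+1},\dots,x_{n-1}$, that $x_j$ commutes with $[x_j,x_{n+j}]\in X_{j+1,n+j-1}$, and that $X_{j+1,n+j-1}$ is elementary abelian, one gets $[[x_0,x_n],x_{n+j}]=[x_j,x_{n+j}]^{c_j}$. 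Comparing with the claim yields $[x_j,x_{n+j}]^{c_j}=1$; but $[x_j,x_{n+j}]$ is a $t$-image of $[x_0,x_n]$ or of $[x_1,x_{n+1}]$ according to the parity of $j$, hence is a nontrivial element of the elementary abelian group $X_{j+1,n+j-1}$ and so has order $p$, forcing $c_j=0$. This completes the induction, giving $[x_0,x_n]=1$ — a contradiction. (For $n=2$ the induction is just the case $j=1$, and the argument goes through verbatim.)

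The main thing to get right is the conjugation identity behind the claim together with the commuting bookkeeping in both computations: everything reduces to the single observation that the commutators $[x_0,x_n]$, $[x_0,x_{n+j}]$ and $[x_j,x_{n+j}]$ all lie in windows $X_{a,b}$ with $b-a<n$, and that such windows — as well as $x_n$ and $x_j$ — pairwise commute by minimality of $n$. No individual step is deep, but the conjugation manipulations and the index estimates need to be tracked carefully; that is where the real work lies.
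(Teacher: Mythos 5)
Your proof is correct and follows essentially the same route as the paper's: the central step in both is that $x_{n+j}$ centralizes $[x_0,x_n]$ for $1\leq j\leq n-1$ (you prove this by an explicit conjugation computation, the paper by the Three Subgroup Lemma, which amounts to the same thing), followed by expanding $[x_0,x_n]$ in normal form. The only cosmetic difference is at the end: the paper extracts just the leading coefficient of $[x_0,x_n]$ and uses a parity argument on the shift, whereas you assume both commutators are nontrivial and kill all coefficients by induction — both are valid.
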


\begin{proof}
Suppose $w:=[x_0,x_n] \neq 1$. As $n$ is the lower cutoff of $\Theta$,
the subgroup $X_{-(n-1),n-1}$ centralizes $x_0$.
Similarly $X_{1,2n-1}$ centralizes $x_n$.
Thus, for $0 \leq j <n$,
\[ [x_0,x_{n+j}] \in X_{1,n+j-1}\leq X_{1,2n-1},
\quad\text{ implying }\quad [[x_0,x_{n+j}],x_n] = 1. \]
Since $j<n$ we have also $[x_n,x_{n+j}] = 1$, hence $[[x_n,x_{n+j}],x_0] = 1$.
Then the Three Subgroup Lemma (see e.g.\ 
\cite[5.1.10]{Rob96})
implies $[w,x_{n+j}] = [[x_0,x_n],x_{n+j}] = 1$.

Let $i := \n(w)$. As $w \in X_{1,n-1}$ it follows that $1\leq i<n$ and hence $[w,x_{n+i}] = 1$.
But $w$ can be written as $w = x_i^{e_i}\dots x_{n-1}^{e_{n-1}}$
with $e_i,\dots e_{n-1} \in \ZZ_p$.
Since the lower cutoff is $n$, we have $[x_j,x_{n+1}]=1$ for $i+1\leq j\leq n-1$.
Thus also $[x_i,x_{n+i}] = 1$.

As $[x_{2k},x_{2k+n}] = t^k([x_0,x_n]) = t^k(w) \neq 1$ for all $k \in \ZZ$,
it follows that $i$ must be odd. So there is $m\in\ZZ$ with $i=2m+1$,
therefore $[x_1,x_{n+1}] = t^{-m}([x_i,x_{n+i}]) = t^{-m}(1) = 1$.
This proves the first assertion,
the second follows by a symmetric argument.
\end{proof}

\begin{prop} \label{abelian<=>single-shift-inv}
The following are equivalent:
\begin{enumerate}
\item The group $X$ is abelian.
\item The group $X$ is elementary abelian (i.e.\ abelian and of exponent $p$).
\item The mapping $x_k \mapsto x_{k+1}$ extends to an automorphism of $X$.
\end{enumerate}
\end{prop}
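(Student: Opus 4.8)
\textbf{Proof plan for \cref{abelian<=>single-shift-inv}.}
The implications (ii)$\Rightarrow$(i) and (ii)$\Rightarrow$(iii) are immediate: elementary abelian certainly implies abelian, and if $X$ is elementary abelian of exponent $p$ then, by (ZS2), $X_{n,m}$ is the free $\FF_p$-vector space on $x_n,\dots,x_m$, so the shift $x_k\mapsto x_{k+1}$ is a well-defined linear bijection on each $X_{n,m}$ and hence extends to an automorphism of $X=\bigcup_{n\le m}X_{n,m}$. For (i)$\Rightarrow$(ii): if $X$ is abelian, then by (ZS2) each $X_{n,m}$ is an abelian group of order $p^{m-n+1}$ generated by $m-n+1$ elements of order $p$ (by (ZS4)), hence is elementary abelian; taking the union over all $n\le m$ gives that $X$ is elementary abelian. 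The only real content is therefore (iii)$\Rightarrow$(i) (or, packaged with the above, (iii)$\Rightarrow$(ii)), and this is where \cref{MaxProposition5.4(1)} comes in.

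So suppose the single shift $s:x_k\mapsto x_{k+1}$ extends to an automorphism of $X$, and assume for contradiction that $X$ is non-abelian. Let $n:=\ell(\Theta)$ be the lower cutoff, so there exist indices with $[x_a,x_{a+n}]\ne 1$; applying the automorphism $s^{-a}$ (which conjugates $x_j$ to $x_{j-a}$ and preserves the property of being a commutator of two generators) we may assume $[x_0,x_n]\ne 1$. Now apply $s$ itself: since $s$ is an automorphism of $X$ sending $x_j$ to $x_{j+1}$, we get $[x_1,x_{n+1}]=s([x_0,x_n])\ne 1$. But \cref{MaxProposition5.4(1)} says precisely that $[x_0,x_n]\ne 1$ forces $[x_1,x_{n+1}]=1$, a contradiction. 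Hence $X$ is abelian, and by the first paragraph it is in fact elementary abelian.

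\textbf{Main obstacle.} There is essentially no obstacle beyond correctly invoking \cref{MaxProposition5.4(1)}; the one point to be careful about is the reduction "we may assume $[x_0,x_n]\ne 1$", which uses that $s$ (and its inverse) is a global automorphism of $X$ shifting all generators by $1$, so that conjugating a nontrivial commutator $[x_a,x_{a+n}]$ by $s^{-a}$ yields the nontrivial commutator $[x_0,x_n]$. Once that is in place, the contradiction with \cref{MaxProposition5.4(1)}, which was stated exactly with this application in mind, is immediate. It is worth remarking explicitly in the write-up that the shift automorphism $t$ from (ZS3) shifts by $2$, whereas the hypothesis in (iii) provides a shift by $1$; it is the existence of the latter, finer symmetry that collides with the parity obstruction built into the proof of \cref{MaxProposition5.4(1)} (namely that $[x_{2k},x_{2k+n}]=t^k(w)\ne 1$ for all $k$).
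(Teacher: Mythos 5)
Your proposal is correct and follows essentially the same route as the paper: the easy equivalence of (i) and (ii) via the order-$p$ generators, and the reduction of (iii)$\Rightarrow$(i) to \cref{MaxProposition5.4(1)} applied at the lower cutoff. The only cosmetic difference is that you run the argument directly (using the hypothesized shift-by-one automorphism to normalize the nontrivial commutator to $[x_0,x_n]$ and then derive a contradiction), whereas the paper states it contrapositively; the substance is identical.
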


\begin{proof}
By (ZS2), the generators $x_n$ have order $p$.
Thus if $X$ is abelian, then $X$ has exponent $p$.
Thus (i) implies (ii). The converse implication is trivial.
Also that (ii) implies (iii) now is readily verified.

Assume that $X$ is not abelian and let $n:= \ell(\Theta)$.
By \cref{MaxProposition5.4(1)}, $[x_0,x_n] \neq 1$ implies $[x_1,x_{n+1}] = 1$
and $[x_1,x_{n+1}] \neq 1$ implies $[x_0,x_n] = 1$.
Thus, the mapping $x_k \mapsto x_{k + 1}$ does not extend to an automorphism of $X$.
\end{proof}

\section{Shift-invariant subgroups}

In this section we study subgroups of $X$ which are invariant under the shift map $t$.
We prove that such subgroups are close to forming $\ZZ$-systems again. Moreover,
those of infinite index are necessarily abelian.

\begin{defn}
A subgroup $Y\leq X$ is called \Defn{shift-invariant} if $t(Y) = Y$.
We set
\begin{align*}
Y_{even} &:= \{ y \in Y^* \mid \n(y) \in 2\ZZ \},\\
Y_{odd} &:= \{ y \in Y^* \mid \n(y) \in 1+2\ZZ \}.
\end{align*}

For $n\leq m\in\ZZ\cup\{\pm\infty\}$, set $Y_{n,m}:=Y\cap X_{n,m}$.
\end{defn}

\begin{remark}
By shift-invariance of $Y$, we have $t(Y_{n,m})=Y_{n+2,m+2}$.
\end{remark}

\begin{lemma} \label{MaxProposition5.7(1)}
Let $Y \leq X$ be shift-invariant. Then the following are equivalent:
\begin{enumerate}
\item The index of $Y$ in $X$ is finite.
\item Both $Y_{even}$ and $Y_{odd}$ are non-empty.
\end{enumerate}
\end{lemma}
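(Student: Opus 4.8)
The plan is to treat the two implications separately: (i)$\Rightarrow$(ii) is a short counting argument, while for (ii)$\Rightarrow$(i) the idea is to approximate $X$ by the finite $p$‑groups $X_{n,m}$ and to bound the index of $Y\cap X_{n,m}$ uniformly. For (i)$\Rightarrow$(ii), suppose $[X:Y]=r<\infty$. The $r+1$ elements $x_0,x_2,\dots,x_{2r}$ are pairwise distinct (they have pairwise distinct values of $\n$), so by the pigeonhole principle two of them, say $x_{2i}$ and $x_{2j}$ with $i<j$, lie in a common right coset of $Y$; then $x_{2i}x_{2j}^{-1}=x_{2i}^{1}x_{2j}^{p-1}$ lies in $Y^*$ and is already in normal form, so $\n(x_{2i}x_{2j}^{-1})=2i$ and hence $x_{2i}x_{2j}^{-1}\in Y_{even}$. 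Applying the same argument to $x_1,x_3,\dots,x_{2r+1}$ produces an element of $Y_{odd}$.

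For (ii)$\Rightarrow$(i), fix $a\in Y_{even}$, $b\in Y_{odd}$ and put $W:=\max(\w(a),\w(b))$. Since $t(Y)=Y$, all shifts $t^k(a)$ and $t^k(b)$ lie in $Y$, so for every $n\in\ZZ$ there is an element $g_n\in Y^*$ with $\n(g_n)=n$ and $\w(g_n)\le W$ (take a suitable shift of $a$ if $n$ is even, of $b$ if $n$ is odd). Now fix $n\le m$ in $\ZZ$ and bound the index of $Y\cap X_{n,m}$ in the finite $p$‑group $X_{n,m}$. If $m-n+1\le W-1$ the index is at most $|X_{n,m}|=p^{m-n+1}\le p^{W-1}$. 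Otherwise, for each $j$ with $n\le j\le m-W+1$ the support of $g_j$ is contained in $[n,m]$, so $g_j\in Y\cap X_{n,m}$, whereas $\n(g_j)=j$ forces $g_j\notin X_{j+1,m}$; hence the image of $g_j$ generates the cyclic group $X_{j,m}/X_{j+1,m}$ of order $p$. Using the normal series $X_{n,m}\rhd X_{n+1,m}\rhd\dots\rhd X_{m,m}\rhd 1$ from the previous section (each step of index $p$, so each term is normal in its predecessor) together with a downward induction on $j$ starting from the trivial inclusion $X_{m-W+2,m}\subseteq(Y\cap X_{n,m})X_{m-W+2,m}$, one obtains $X_{n,m}=(Y\cap X_{n,m})\cdot X_{m-W+2,m}$, and therefore $[X_{n,m}:Y\cap X_{n,m}]\le|X_{m-W+2,m}|=p^{W-1}$.

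It remains to pass from the truncations to $X$ itself. Since $Y\cap X_{n,m}$ is a subgroup of $X_{n,m}$, the map $(Y\cap X_{n,m})x\mapsto Yx$ is a bijection from the right cosets of $Y\cap X_{n,m}$ in $X_{n,m}$ onto the right cosets of $Y$ in $X$ that meet $X_{n,m}$; as the window $[n,m]$ grows, the number of such cosets is non‑decreasing and eventually accounts for every coset, because each $x\in X$ lies in some $X_{n,m}$. Hence $[X:Y]=\sup_{n\le m}[X_{n,m}:Y\cap X_{n,m}]\le p^{W-1}<\infty$, which is (i).

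The crux — and the only place where both hypotheses on $Y$ are used — is the existence of the elements $g_n$ of uniformly bounded width at every position, which is exactly what lets $Y$ detect a generator of each layer $X_{j,m}/X_{j+1,m}$ inside every truncation. I expect the main trap to be the temptation to reduce an arbitrary coset representative into a single fixed finite ``window'': the cancellation results (e.g. \cref{cancel start}) only let one push a representative toward $+\infty$ without simultaneously controlling the other end of its support, so a direct reduction of that kind does not obviously terminate in a bounded region; the finite‑approximation argument above sidesteps this difficulty entirely.
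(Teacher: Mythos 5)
Your proof is correct, and the second implication takes a genuinely different route from the paper's. For (i)$\Rightarrow$(ii) the paper argues by contradiction, showing that if every element of $Y^*$ started at an even index then the cosets $x_mY$ for odd $m$ would be pairwise distinct (by computing $\n(x_mY)$ via \cref{startindex}); your pigeonhole argument instead produces an explicit element $x_{2i}x_{2j}^{-1}=x_{2i}x_{2j}^{p-1}\in Y_{even}$, which is a touch more constructive but essentially the same counting idea. For (ii)$\Rightarrow$(i) the paper chooses $a,b$ of \emph{minimal} width in $Y_{even}$, $Y_{odd}$, proves via \cref{cancel end} that $\m(a)$ and $\m(b)$ have opposite parity, and then uses \cref{cancel start} to reduce coset representatives into $X_{0,m}$ directly, obtaining $X=X_{0,m}Y$ and $|X:Y|\le p^{m+1}$. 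You avoid both the minimality assumption and the parity claim entirely: the uniform width bound $W$ lets $Y\cap X_{n,m}$ hit a generator of every layer $X_{j,m}/X_{j+1,m}$ for $j\le m-W+1$, giving the uniform bound $[X_{n,m}:Y\cap X_{n,m}]\le p^{W-1}$, and the passage to $X$ via directedness of the windows is clean (your bijection between cosets of $Y\cap X_{n,m}$ and cosets of $Y$ meeting $X_{n,m}$ is correct). What your approach buys is that the reduction happens inside finite $p$-groups where normality of each layer is automatic and no two-sided control of supports is needed, which is exactly the difficulty you identify at the end; the price is a little more bookkeeping in the finite-approximation step. Both arguments yield explicit index bounds ($p^{W-1}$ versus the paper's $p^{m+1}$).
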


\begin{proof}
Let $Y$ be of finite index in $X$.
Suppose, by contradiction, that $\n(y)$ is even for all $y \in Y^*$.
Then $\n(Y^*) = 2\ZZ$ because $Y$ is shift-invariant. In view of
\cref{startindex}, for each odd integer $m$ we have
\[\n(x_{m}Y) = \{ m \}  \cup \{2k\in2\ZZ\mid 2k<m\}.\]
Hence for any two odd integers $m \neq m'$ we have $x_mY \neq x_{m'}Y$.
Thus we get infinitely many cosets of $Y$, which is a contradiction.

Similarly the assumption that $\n(y)$ is odd for all $y \in Y^*$
leads to a contradiction and hence (i) implies (ii).

For the converse, let $a$ (resp. $b$) be of minimal width in $Y_{even}$ (resp. $Y_{odd}$).
Since $Y$ is shift-invariant, we may assume that $\n(a) = 0$ and $\n(b) = 1$.

We claim that $\m(a)$ and $\m(b)$ have different parity. Suppose that this
is not the case. Then there exists an element $k \in \ZZ$
such that $\m(t^k(a)) = \m(b)$. Using \cref{cancel end}
it follows that there is $\lambda\in\ZZ_p^*$ such that
$y:= b^\lambda t^k(a)$ satisfies either $y\in Y_{even}$ and $\w(y) < \w(a)$,
or $y \in Y_{odd}$ and $\w(y) < \w(b)$. Either case contradicts the minimality
of $a$ resp.\ $b$.

Let $m := \max \{ \w(a),\w(b) \}$. Since $\n(a)=0$ and $\n(b) =1$,
by using \cref{cancel start} and induction, it follows that $X_{-\infty,m}Y  \subseteq  X_{0,m}Y$.
As $\m(a)$ and $\m(b)$ have different parity, one also sees that $X_{0,\infty} Y \subseteq X_{0,m}Y$.
As $X = X_{-\infty,m}X_{0,\infty}$ it follows that $X=X_{0,m}Y$.
Thus $|X:Y|\leq |X_{0,m}| = p^{m+1}$.
\end{proof}

\begin{prop} \label{MaxProposition5.7(3)}
Let $1\neq Y\leq X$ be shift-invariant with $|X:Y| = \infty$,
let $u \in Y^*$ be of minimal width in $Y$ and $y_n := t^n(u)$ for $n \in \ZZ$. 
Then the following hold.
\begin{enumerate}
\item $Y = \gen{y_n \mid n \in \ZZ}$.
\item $(Y,(y_n)_{n \in \ZZ})$ is a $\ZZ$-system.
\item $Y$ is elementary abelian of exponent $p$.
\end{enumerate}
\end{prop}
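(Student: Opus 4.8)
The plan is to exploit the minimality of $\w(u)$ together with shift-invariance. First I would prove (i): set $Z := \gen{y_n \mid n \in \ZZ} \leq Y$; this is clearly shift-invariant, nontrivial, and of infinite index in $X$ (since it lies in $Y$). Given any $v \in Y^*$, I want to show $v \in Z$ by induction on $\w(v)$. After applying a suitable power of $t$ I may assume $\n(v) = \n(u)$ (note $\n(u)$ and $\n(v)$ have the same parity: if not, then $u$ and $v$ would provide elements of $Y_{even}$ and $Y_{odd}$, so by \cref{MaxProposition5.7(1)} $Y$ would have finite index, contradiction). Then \cref{cancel start} produces $\lambda \in \ZZ_p^*$ with $\n(y^{\lambda} v) > \n(u)$ — wait, I should use $u$ in place of $y$ here — with $\w(u^{\lambda} v) < \max(\w(v), \w(u)) = \w(v)$ if $\w(v) > \w(u)$; since $u^{\lambda}v \in Y^*$ (or is $1$), by induction $u^{\lambda}v \in Z$, hence $v \in Z$. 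The base case $\w(v) \leq \w(u)$: by minimality $\w(v) = \w(u)$, so $\n(v) = \n(u)$ after shifting, and then \cref{cancel start} would force $u^{\lambda}v$ to have strictly smaller width and thus be $1$ by minimality, giving $v = u^{-\lambda} \in Z$. This establishes (i). I should double-check the degenerate possibility that $u^{\lambda}v = 1$ is handled uniformly, which it is since $1 \in Z$.

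Next, (ii): I must verify (ZS1), (ZS2), (ZS3) for $(Y, (y_n)_{n\in\ZZ})$. (ZS1) is exactly (i); (ZS3) is witnessed by the restriction $t|_Y$, which is an automorphism of $Y$ by shift-invariance and maps $y_n \mapsto y_{n+2}$ by construction. The substance is (ZS2): for $n \leq m$, the group $\gen{y_k \mid n \leq k \leq m}$ should have order $p^{m-n+1}$. The key point is that $\m(y_k) - \n(y_k) = \w(u) - 1$ is constant, and $\n(y_k) = \n(u) + $ (a multiple of $2$ depending on $k$) — more precisely the $\n(y_k)$ are spaced $2$ apart — so the "start indices" $\n(y_n) < \n(y_{n+1}) < \cdots < \n(y_m)$ are strictly increasing. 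Using \cref{startindex} repeatedly, a nonempty product $y_n^{a_n}\cdots y_m^{a_m}$ with $a_n \neq 0$ has $\n$-value equal to $\n(y_n)$, hence is nontrivial; this shows the $p^{m-n+1}$ products $y_n^{a_n}\cdots y_m^{a_m}$ ($a_k \in \ZZ_p$) are pairwise distinct, so $|\gen{y_k \mid n\leq k\leq m}| \geq p^{m-n+1}$. For the reverse inequality I would observe that $\gen{y_k \mid n \leq k \leq m} \leq X_{\n(y_n), \m(y_m)}$, but $\m(y_m) - \n(y_n) = (m-n)\cdot\text{(spacing)} + \w(u) - 1$, which is generally larger than $m - n$, so that crude bound is not enough. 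Instead I would argue that the normal form of each element of $\gen{y_k \mid n\leq k\leq m}$ is recovered from the $a_k$: reading off $a_n$ as the leading coefficient (via \cref{startindex}), cancelling $y_n^{a_n}$, and inducting, one sees each element has a \emph{unique} expression $y_n^{a_n}\cdots y_m^{a_m}$, so $|\gen{y_k \mid n \leq k \leq m}| = p^{m-n+1}$ exactly. This uniqueness argument, together with making the parity/spacing bookkeeping precise, is the step I expect to be the main technical obstacle.

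Finally, (iii): having shown $(Y, (y_n)_{n\in\ZZ})$ is a $\ZZ$-system, I apply \cref{abelian<=>single-shift-inv}, whose criterion (iii) is that $y_k \mapsto y_{k+1}$ extends to an automorphism of $Y$. The point is that the shift automorphism $t$ of the \emph{ambient} $\ZZ$-system $\Theta$ already does this: $t(y_n) = t(t^n(u)) = t^{n+1}(u) = y_{n+1}$ for all $n$, and $t|_Y$ is an automorphism of $Y$. Hence $(Y,(y_n))$ satisfies condition (iii) of \cref{abelian<=>single-shift-inv}, so $Y$ is elementary abelian of exponent $p$. One caveat to check carefully is that \cref{abelian<=>single-shift-inv} is stated for "the" $\ZZ$-system $\Theta$ fixed at the start of Section \ref{ZS-Section}; I would note that its proof uses only the axioms (ZS1)--(ZS6) and the lemmas derived from them, hence applies verbatim to any $\ZZ$-system, in particular to $(Y,(y_n)_{n\in\ZZ})$.
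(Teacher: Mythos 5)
Your proposal is correct and follows essentially the same route as the paper: part (i) via the parity argument from \cref{MaxProposition5.7(1)} plus \cref{cancel start} and induction on width, part (ii) by a leading-coefficient/normal-form count for (ZS2) (the paper packages this as the index-$p$ chain $U_{n,m}=\gen{y_n}U_{n+1,m}$ with $\gen{y_n}\cap U_{n+1,m}=1$), and part (iii) by feeding the ambient shift $t$ into \cref{abelian<=>single-shift-inv}. The one detail you should add is that $u^p=1$: by \cref{cancel power} we have $\w(u^p)<\w(u)$, so minimality of $\w(u)$ forces $u^p=1$; without this the set of products $y_n^{a_n}\cdots y_m^{a_m}$ with $a_k\in\ZZ_p$ need not be closed under multiplication, and your ``unique expression'' upper bound for (ZS2) would not go through.
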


\begin{proof}
\begin{enumerate}
\item
By shift-invariance of $Y$ we have $y_n\in Y$, thus $U := \gen{y_n \mid n \in \ZZ} \leq Y$.
For $y\in Y$ we will show by induction on $\w(y)$ that $y\in U$, and hence $Y=U$.
If $\w(y)=0$ then $y=1\in U$. So suppose $\w(y)>0$.
Now $|X:Y| = \infty$, therefore $\n(y)$ and $\n(u)$ have the
same parity by \cref{MaxProposition5.7(1)}. Hence there is $k\in\ZZ$
such that
\[ \n(y)=\n(u)+2k=\n(t^k(u))=\n(y_k).\]
Moreover, $\w(y)\geq \w(y_k)=\w(u)$. Thus by \cref{cancel start} there is
$\lambda\in\ZZ_p^*$ such that $\w(y_k^\lambda y)<\w(y)$.
Hence by the induction hypothesis $y_k^\lambda y\in U$.
Since also $y_k\in U$ we get $y\in U$.

\item
(ZS1) follows from Assertion (i).
(ZS3) follows from the fact that $t(Y) = Y$ and $t(y_n) = y_{n+1}$ for all $n \in \ZZ$,
hence $s:=t^2$ is a shift automorphism for $(Y,(y_n)_{n \in \ZZ})$.
It remains to verify (ZS2). Without loss of generality, assume
$\n(u)\in\{0,1\}$ and thus $\n(y_n)\in\{2n,2n+1\}$ for $n\in\ZZ$.

For $n\leq m\in\ZZ$ let $U_{n,m}:=\gen{y_n,\ldots,y_m}\leq X_{2n,\infty}$.
As $\n(y_n) \in\{2n,2n+1\}$, we have $y_n\notin X_{2n+2,\infty}$,
hence $y_n\notin U_{n+1,m}\leq X_{2n+2,\infty}$. 
\Cref{cancel power} implies that $\w(u^p) < \w(u)$.
Since $u$ was of minimal width, we conclude $u^p=1$. Thus $y_n$ has order $p$.
Since $p$ is prime, we get $\gen{y_n}\cap U_{n+1,m}=1$.

Now we claim that  $U_{n,m}=\gen{y_n}U_{n+1,m}$.
To see this, pick $y\in U_{n,m}$. If $\n(y)>\n(y_n)$ then $y\in U_{n+1,m}$.
Otherwise $\n(y)=\n(y_n)$, and then \cref{cancel start} implies that
there is $\lambda\in\ZZ_p^*$ such that $\n(y_n^\lambda y) > \n(y_n)$,
hence $y_n^\lambda y \in U_{n+1,m}$. The claim follows.

But $\gen{y_n}\cap U_{n+1,m}=1$ and $U_{n,m}=\gen{y_n}U_{n+1,m}$
imply $|U_{n,m}|=p\cdot |U_{n+1,m}|$. By induction it follows
that $|U_{n,m}| = p^{m-n+1}$. Thus (ZS2) holds.

\item
By $(ii)$, $(Y,(y_n)_{n \in \ZZ})$ is a $\ZZ$-system. The shift map $t$ of $(X,(x_n)_{n \in \ZZ})$
leaves $Y$ invariant and thus restricts to an automorphism of $Y$ which extends the mapping
$y_k\mapsto y_{k+1}$. The claim thus follows from \cref{abelian<=>single-shift-inv}.
\qedhere
\end{enumerate}
\end{proof}

\begin{lemma} \label{MaxProposition7.1(2)}
Let $Y\leq X$ be shift-invariant with $Y_{even} \neq
\emptyset \neq Y_{odd}$.  Let $a$ (resp. $b$) be of minimal width in
$Y_{even}$ (resp. $Y_{odd}$) such that $\n(a) = 0$ and $\n(b) = 1$.
For $n \in \ZZ$ let $y_{2n} := t^n(a)$ and $y_{2n+1} := t^n(b)$.
Then the following hold:
\begin{enumerate}
\item $Y = \gen{y_n \mid n \in \ZZ}$.
\item If $\w(a) = \w(b)$, then $(Y,(y_n)_{n \in \ZZ})$ is a $\ZZ$-system.
\end{enumerate}
\end{lemma}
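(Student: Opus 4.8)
The plan is to handle the two parts separately, following the blueprint of the proof of \cref{MaxProposition5.7(3)}, but now with the even-indexed and the odd-indexed generators doing independent work.

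For (i), set $U := \gen{y_n \mid n \in \ZZ}$. Shift-invariance of $Y$ gives $y_n \in Y$, so $U \leq Y$; for the reverse inclusion I would show $y \in U$ for every $y \in Y$ by induction on $\w(y)$. The case $\w(y) = 0$ is clear, and if $\w(y) > 0$ then $\n(y)$ is even or odd. In the even case $y \in Y_{even}$, and since $\n(a) = 0$ and $t$ shifts starting indices by $2$ there is $k \in \ZZ$ with $\n(y_{2k}) = 2k = \n(y)$; minimality of $a$ gives $\w(y) \geq \w(a) = \w(y_{2k})$, so \cref{cancel start} yields $\lambda \in \ZZ_p^*$ with $\w(y_{2k}^\lambda y) < \w(y)$, and the induction hypothesis puts $y_{2k}^\lambda y$, hence $y$, into $U$. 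The odd case is identical with $b$ in place of $a$. (This part does not use the hypothesis $\w(a)=\w(b)$.)

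For (ii), put $w := \w(a) = \w(b)$. Then (ZS1) is (i), and (ZS3) holds because $t$ restricts to an automorphism of $Y$ with $t(y_n) = y_{n+2}$; so the work is in (ZS2). First I would record the elementary facts that $\n(y_n) = n$ and $\m(y_n) = n+w-1$ for all $n$ — immediate from $\n(a)=0$, $\m(a)=w-1$, $\n(b)=1$, $\m(b)=w$ and the shift — so that $y_n \in X_{n,n+w-1}$ and $\w(y_n) = w$; and that each $y_n$ has order $p$. For the order, \cref{cancel power} gives $\w(y_n^p) < w$, so if $y_n^p \neq 1$ then $y_n^p$ is a non-trivial element of $Y$, hence of width $\geq \w(a) = w$ if its starting index is even and $\geq \w(b) = w$ if it is odd — either way a contradiction. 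This is precisely where the hypothesis $\w(a) = \w(b)$ is used. It follows that $w$ is the minimal width occurring in $Y^*$, so, setting $U_{n,m} := \gen{y_k \mid n \leq k \leq m}$ (a subgroup of $Y$, since each $y_k$ lies in $Y$), every non-trivial $z \in U_{n,m}$ has $\w(z) \geq w$; combined with $U_{n,m} \leq X_{n,m+w-1}$ this forces $n \leq \n(z) \leq m$.

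The heart of the matter is the identity $U_{n,m} \cap X_{n+1,m+w-1} = U_{n+1,m}$. The inclusion $\supseteq$ is clear; for $\subseteq$ I would take a non-trivial $z$ on the left, so that $\n(z) \geq n+1$, and argue by induction on $\w(z)$: putting $j := \n(z)$, the previous paragraph gives $j \in \{n+1,\dots,m\}$, hence $y_j \in U_{n+1,m}$ with $\n(y_j) = \n(z)$ and $\w(y_j) = w \leq \w(z)$; \cref{cancel start} then produces $\lambda \in \ZZ_p^*$ with $\n(y_j^\lambda z) > n$ and $\w(y_j^\lambda z) < \w(z)$, and $y_j^\lambda z$ still lies in $U_{n,m} \cap X_{n+1,m+w-1}$, so by induction $y_j^\lambda z \in U_{n+1,m}$, whence $z = y_j^{-\lambda}(y_j^\lambda z) \in U_{n+1,m}$. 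Granting this, since $X_{n+1,m+w-1}$ has index $p$ in the finite $p$-group $X_{n,m+w-1}$ by (ZS2) applied to $(X,(x_n))$ — hence is normal there with quotient of order $p$ — and since $y_n \in U_{n,m} \setminus X_{n+1,m+w-1}$, we get $[U_{n,m}:U_{n+1,m}] = p$; induction on $m-n$ with base case $|U_{m,m}| = |\gen{y_m}| = p$ then yields $|U_{n,m}| = p^{m-n+1}$, which is (ZS2). I expect this identity to be the only genuine obstacle: the width-reduction via \cref{cancel start} only terminates because the bound $\n(z) \leq m$ keeps the index $j$ in the admissible range, and that bound in turn rests on $w$ being the global minimal width of $Y^*$ — precisely the information that $\w(a) = \w(b)$ provides.
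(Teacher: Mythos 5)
Your proof is correct and follows the paper's approach: part (i) is the same width-induction via \cref{cancel start} that the paper gives, and for part (ii) the paper simply defers to ``a similar argument as in the proof of Assertion (ii) in \cref{MaxProposition5.7(3)}'', which is exactly the argument you carry out (with the hypothesis $\w(a)=\w(b)$ correctly identified as what makes $w$ the global minimal width and hence forces $y_n^p=1$). Your only organizational deviation is to establish $U_{n,m}\cap X_{n+1,m+w-1}=U_{n+1,m}$ and count indices, rather than the product decomposition $U_{n,m}=\gen{y_n}U_{n+1,m}$ with $\gen{y_n}\cap U_{n+1,m}=1$ used in \cref{MaxProposition5.7(3)}; this is a minor variant (arguably a more carefully justified one) of the same idea.
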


\begin{proof}
\begin{enumerate}
\item
By shift-invariance of $Y$ we have $y_n\in Y$, thus $U := \gen{y_n \mid n \in \ZZ} \leq Y$.
For $y\in Y$ we will show by induction on $\w(y)$ that $y\in U$, and hence $Y=U$.
If $\w(y)=0$ then $y=1\in U$. So suppose $\w(y)>0$ and let $n:=\n(y)$.
Then $\w(y)\geq \w(y_n)$. Since $\n(y_n)=n=\n(y)$, by \cref{cancel start} there is
$\lambda\in\ZZ_p^*$ such that $\w(y_n^\lambda y)<\w(y)$.
Hence by the induction hypothesis $y_n^\lambda y\in U$.
Since also $y_n\in U$ we get $y\in U$.

\item
This follows by a similar argument as in the proof of Assertion (ii) in \cref{MaxProposition5.7(3)}.
(Note that we do not make use of this observation in this paper.)
\qedhere
\end{enumerate}
\end{proof}
Combining the previous statements yields the following:

\begin{lemma} \label{shift-inv-gens}
Let $Y$ be a shift-invariant subgroup of $X$. Then there are
elements $a,b\in Y$ such that $Y=\gen{t^k(a), t^k(b) \mid k\in \ZZ}$.
\end{lemma}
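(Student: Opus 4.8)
The plan is to split into two cases according to \cref{MaxProposition5.7(1)}, depending on whether $Y$ has finite or infinite index in $X$, and in each case produce the two generators directly from the structural results already established.

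First I would dispose of the trivial and the infinite-index cases. If $Y=1$ we may take $a=b=1$. If $1\neq Y$ has infinite index in $X$, then \cref{MaxProposition5.7(3)} applies: picking $u\in Y^*$ of minimal width and setting $y_n:=t^n(u)$, we get $Y=\gen{y_n\mid n\in\ZZ}=\gen{t^n(u)\mid n\in\ZZ}$. So here $a:=u$ works and we may take $b:=1$ (or $b:=u$); since $t^k(1)=1$ for all $k$, adjoining the $t^k(b)$ changes nothing and we obtain $Y=\gen{t^k(a),t^k(b)\mid k\in\ZZ}$.

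Next, the finite-index case. If $Y$ has finite index in $X$, then by \cref{MaxProposition5.7(1)} both $Y_{even}$ and $Y_{odd}$ are non-empty. Hence \cref{MaxProposition7.1(2)} applies: choosing $a$ of minimal width in $Y_{even}$ with $\n(a)=0$ and $b$ of minimal width in $Y_{odd}$ with $\n(b)=1$ (which we may arrange using shift-invariance, since $t$ shifts $\n$ by $2$), assertion (i) of that lemma gives $Y=\gen{y_n\mid n\in\ZZ}$ where $y_{2n}=t^n(a)$ and $y_{2n+1}=t^n(b)$. But $\{y_n\mid n\in\ZZ\}=\{t^k(a)\mid k\in\ZZ\}\cup\{t^k(b)\mid k\in\ZZ\}$, so this is exactly $Y=\gen{t^k(a),t^k(b)\mid k\in\ZZ}$, as required.

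There is essentially no obstacle here: the statement is a bookkeeping corollary that merely packages \cref{MaxProposition5.7(1),MaxProposition5.7(3),MaxProposition7.1(2)}. The only point requiring a word of care is the degenerate case $Y=1$ (and the accompanying convention $t^k(1)=1$), together with the observation that in the infinite-index case a single generator $a$ suffices, so the second generator $b$ may be taken trivial; everything else is a direct appeal to the three preceding results.
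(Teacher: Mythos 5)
Your proposal is correct and matches the paper's proof exactly: the same three-way case split (trivial $Y$, non-trivial $Y$ of infinite index via \cref{MaxProposition5.7(3)}, finite index via \cref{MaxProposition5.7(1),MaxProposition7.1(2)}) with the same choices of generators.
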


\begin{proof}
If $Y$ has finite index in $X$, this follows from
\cref{MaxProposition5.7(1),MaxProposition7.1(2)}.
If $Y$ is trivial, we can choose $a=b=1$.
Finally, if $Y$ is non-trivial but has infinite index, this
follows from \cref{MaxProposition5.7(3)}
\end{proof}

\begin{remark}
We can make the choice of generators $a,b$ unique by requiring that each should either be trivial; or else start at index 0 or 1, be of minimal width amongst all such elements, and have ``lead exponent'' equal to 1.

The resulting generating system is close to being a $\ZZ$-system again. However, the generators are not
necessarily independent anymore; in particular, it can happen that that $a^p=b$.
\end{remark}

\begin{lemma} \label{decompose shift inv}
Let $Y$ be a shift-invariant subgroup of $X$. Then  for every $n\in\ZZ$,
there is $m \in \ZZ$ such that $Y = Y_{-\infty,m} Y_{n,\infty}$.
\end{lemma}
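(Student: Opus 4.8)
The plan is to reduce the statement to the decomposition $X = X_{-\infty,m}\,X_{n,\infty}$ of the ambient $\ZZ$-system, which was established (in the shape $X = X_{0,m}\,Y$ and, after shifting, $X = X_{-\infty,m}\,X_{0,\infty}$) inside the proof of \cref{MaxProposition5.7(1)}; the point here is to get the analogous statement \emph{inside} $Y$ rather than modulo $Y$. First I would dispose of the easy cases: if $Y$ is trivial the claim is vacuous, and if $Y$ has finite index in $X$ one can argue directly. So assume $1 \neq Y \leq X$. I would then invoke \cref{shift-inv-gens} to obtain $a,b \in Y$ with $Y = \gen{t^k(a),t^k(b) \mid k \in \ZZ}$, and set $y_i := t^i(\cdot)$ as in \cref{MaxProposition7.1(2)} and \cref{MaxProposition5.7(3)}, so that $Y$ is generated by a ``shifted'' family $(y_i)$ with $\n(y_i)$ growing linearly in $i$.

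The heart of the argument is a cancellation/normal-form computation internal to $Y$: fix $n \in \ZZ$. Let $M$ be large enough that every generator $y_i$ with $\n(y_i) \geq n$ but whose \emph{support} reaches below $n$ (i.e. with $2i < n \leq \m(y_i)$ — there are only finitely many such $i$, since $\w(a),\w(b)$ are fixed) has $\m(y_i) \leq M$; take $m := M$. Given $y \in Y^*$, write it in normal form in $X$ and induct on $\w(y)$ (or on $\m(y) - n$, whichever is cleaner): if $\n(y) \geq n$ there is nothing to do (then $y \in Y_{n,\infty}$); if $\m(y) \leq m$ likewise $y \in Y_{-\infty,m}$; otherwise $\n(y) < n \leq \m(y)$ and $\m(y) > m$. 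In this last case I would use the generator $y_i \in Y$ with $\n(y_i) = \n(y)$ (which exists because the parities match, by the structure of the generating family and \cref{MaxProposition5.7(1)}), and apply \cref{cancel start}: there is $\lambda \in \ZZ_p^*$ with $\n(y_i^\lambda y) > \n(y)$ and $\w(y_i^\lambda y) < \max(\w(y),\w(y_i))$. Peeling off such generators from the left moves the start index up while keeping the end index bounded by $\m(y)$; once the start index reaches $n$ we are done. Symmetrically, using \cref{cancel end} and generators matching $\m(y)$, one peels from the right to bring the end index down to $m$. Combining the two peelings writes $y = y'\, y''$ with $y' \in Y_{-\infty,m}$ and $y'' \in Y_{n,\infty}$ — but one has to be careful that the ``peeling from the right'' elements land in $Y_{-\infty,m}$ and not just in $X_{-\infty,m}$, which is why they are products of the $y_i$.

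The main obstacle I anticipate is exactly this bookkeeping of \emph{which} shifted generators $y_i$ lie in $Y_{-\infty,m}$ versus $Y_{n,\infty}$, and choosing $m$ uniformly so that the inductive peeling terminates: one must ensure that after multiplying on the left by some $y_i$ to raise $\n$, the end index $\m$ does not overshoot the fixed bound $m$. This forces $m$ to be chosen as the maximum of $\m(y_i)$ over the finitely many ``straddling'' generators (those with $2i < n$), and one needs the finiteness of $\w(a)$ and $\w(b)$ — guaranteed by their being of minimal width in $Y_{even}$, $Y_{odd}$ — to know there are only finitely many such $i$. Once $m$ is pinned down this way, both inductions are strictly decreasing (in $\w$, or in $n - \n(y)$ and $\m(y) - m$ respectively) and terminate, giving $Y = Y_{-\infty,m}\,Y_{n,\infty}$. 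A clean way to package all of this is to prove the single statement ``$Y_{-\infty,m'}\, Y_{n,\infty} = Y_{-\infty,m''}\, Y_{n,\infty}$ is eventually stationary in $m'$'' and identify the limit, but the direct induction above should suffice.
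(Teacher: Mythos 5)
Your proposal is correct and follows essentially the same route as the paper: take the generators $a,b$ from \cref{shift-inv-gens}, set $m := \max\{n+\w(a),\, n+\w(b)\}$ so that every shift of $a$ or $b$ lies in $Y_{-\infty,m}\cup Y_{n,\infty}$, and then split an arbitrary $y\in Y$ by the left-peeling induction via \cref{cancel start} (which the paper leaves implicit, it being the same induction as in the proof of \cref{MaxProposition5.7(3)}(i)). Note that the left-peeling alone already yields $y=y'y''$ with $y'\in Y_{-\infty,m}$ and $y''\in Y_{n,\infty}$, so your symmetric right-peeling via \cref{cancel end} --- where matching a generator to $\m(y)$ would be more delicate --- is not actually needed.
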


\begin{proof}
Pick $a,b\in Y$ as in \cref{shift-inv-gens}.
Since $Y$ is generated by all shifts of $a$ and $b$, it suffices to choose $m$ large
enough such that $Y_{-\infty,m}$ contains all the shifts of $a$ and $b$ which are not in $Y_{n,\infty}$.
For example, choose $m:= \max\{n + \w(a), n + \w(b) \}$.
\end{proof}

\section{One-sided normal subgroups}

Throughout this section, let $Y$ be a shift-invariant subgroup of $X$.

\begin{nota}
Let $G$ be a group.
The \Defn{normal closure} of $U\subseteq G$ is
$\gen{U}^G := \gen{U^G} = \gen{ g^{-1}Ug \mid g\in G}$.
\end{nota}

\begin{remark}
Recall that a group $G$ is \Defn{locally nilpotent} if every finitely generated
subgroup of $G$ is nilpotent.
Now every finitely generated subgroup $H$ of $X$ is contained in some
$X_{n,m}$ with $n\leq m\in\ZZ$, which is a finite $p$-group by (ZS2).
Hence $H$ is a finite $p$-group, and $X$ is locally nilpotent.
\end{remark}

\begin{lemma} \label{A<=[AK] then A=1}
Let $K$ be nilpotent and $A\leq K$ with $A\leq [A,K]$.
Then $A=1$.
\end{lemma}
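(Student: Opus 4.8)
The plan is to exploit nilpotency of $K$ together with the hypothesis $A \leq [A,K]$ and iterate. Write $\gamma_1(K) := K$ and $\gamma_{i+1}(K) := [\gamma_i(K), K]$ for the lower central series of $K$, and let $c$ be the nilpotency class of $K$, so that $\gamma_{c+1}(K) = 1$. The key observation is that applying the hypothesis repeatedly pushes $A$ arbitrarily far down the lower central series of $K$: I claim that $A \leq \gamma_{i+1}(K)$ for every $i \geq 0$, and the case $i = c$ then forces $A \leq \gamma_{c+1}(K) = 1$, which is the assertion.

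The claim is proved by induction on $i$. For $i = 0$ it is trivial since $A \leq K = \gamma_1(K)$. For the inductive step, suppose $A \leq \gamma_{i}(K)$. Then by monotonicity of the commutator in its first argument, $[A,K] \leq [\gamma_i(K), K] = \gamma_{i+1}(K)$. Combining this with the hypothesis $A \leq [A,K]$ yields $A \leq \gamma_{i+1}(K)$, completing the induction. (One should note that $A \leq [A,K]$ already forces $A = [A,K]$ in fact, but only the inclusion $A \leq [A,K]$ is needed, and only the monotonicity of $[\,\cdot\,, K]$ is used; no normality of $A$ in $K$ is required, since $[A,K]$ is always a subgroup of $K$ when both $A$ and $K$ are.)

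I do not expect any real obstacle here: the entire argument is the one-line iteration $A \leq [A,K] \leq [[A,K],K] \leq \cdots \leq \gamma_{c+1}(K) = 1$, made rigorous by the induction above. The only point requiring a moment's care is that each $\gamma_{i+1}(K)$ is genuinely a subgroup of $K$ (so that the inclusions make sense and the commutator is monotone), which is standard, and that $K$ being nilpotent is exactly what guarantees the descending chain terminates at the trivial group after finitely many steps.
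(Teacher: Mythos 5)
Your proof is correct and is essentially the same argument as the paper's: both iterate the hypothesis $A\leq[A,K]$ to push $A$ down the lower central series of $K$, which terminates by nilpotency. Your version just packages the iteration as an explicit induction on the claim $A\leq\gamma_{i+1}(K)$, while the paper writes the chain $A\leq[A,K]\leq[A,K,K]\leq\dots\leq 1$ directly.
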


\begin{proof}
$K$ is nilpotent, hence its lower central series $K\unrhd [K,K]\unrhd
[K,K,K] \unrhd \dots$ vanishes after finitely many steps.
Since $A\leq K$, also $[A,K,\dots,K]$ eventually vanishes.
From $A\leq [A,K]$ we deduce, by forming the commutator with $K$, that
\[ A\leq [A,K] \leq [A,K,K] \leq \dots \leq 1. \qedhere\]
\end{proof}

\begin{lemma} \label{A-notin-[AU]_U}
Let $G$ be a locally nilpotent group and let $A\leq G$ be finitely generated.
Then $A\leq \gen{[A,G]}^G$ if and only if $|A|=1$.
\end{lemma}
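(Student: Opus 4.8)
The plan is to prove the contrapositive. Suppose $A \neq 1$; I want to show that $A \not\leq \gen{[A,G]}^G$. The idea is to localize the problem to a finitely generated nilpotent subgroup of $G$ and then invoke \cref{A<=[AK] then A=1}.

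First I would fix a finite generating set $a_1,\dots,a_r$ of $A$. Each element $[a_i,g]$ for $g\in G$, as well as its conjugates, lies in some finitely generated subgroup of $G$; however, the normal closure $\gen{[A,G]}^G$ is generally not finitely generated, so one cannot directly work inside a single nilpotent subgroup. To circumvent this, suppose for contradiction that $A \leq \gen{[A,G]}^G$. Then each generator $a_i$ is a product of finitely many conjugates of elements of the form $[a,g]^{\pm1}$ with $a\in A$, $g\in G$. Collecting all the elements $g\in G$ appearing (finitely many in total), together with $a_1,\dots,a_r$, we obtain a finitely generated subgroup $K \leq G$ containing $A$ such that already $A \leq \gen{[A,K]}^K$, i.e. $A \leq \gen{[A,K]^K}$ inside $K$. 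Since $G$ is locally nilpotent, $K$ is nilpotent.

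Now I would upgrade $\gen{[A,K]^K}$ to $[A,K]$. Since $K$ is nilpotent, $[A,K]$ is already normal in $K$: indeed, $[[A,K],K] \leq [A,K]$ trivially, and combined with $[A,K]^K = [A,K]\cdot[[A,K],K] = [A,K]$ (using that $[A,K]$ together with its commutators with $K$ generate $[A,K]^K$, and all higher terms lie in $[A,K]$ by nilpotency one can also argue directly: $[A,K]$ is normal in $\gen{A,K}=K$ because conjugating a commutator $[a,k]$ by $k'\in K$ gives $[a,k]\cdot[[a,k],k']\in[A,K][A,K,K]\subseteq[A,K]$). Hence $\gen{[A,K]^K} = [A,K]$, and we have reduced to $A \leq [A,K]$ with $K$ nilpotent and $A\leq K$. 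By \cref{A<=[AK] then A=1} this forces $A = 1$, contradicting $|A|>1$. This proves that if $|A|\neq 1$ then $A\not\leq\gen{[A,G]}^G$; the converse (if $|A|=1$ then trivially $A=1\leq\gen{[A,G]}^G$) is immediate.

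The main obstacle is the passage from the global, not-necessarily-finitely-generated normal closure $\gen{[A,G]}^G$ to a finitely generated nilpotent subgroup $K$: one must be careful that the finitely many conjugating elements needed to express each $a_i$ can all be absorbed into one finitely generated $K$, and that within $K$ one genuinely recovers $A\leq\gen{[A,K]^K}$. Once inside the nilpotent $K$, identifying $\gen{[A,K]^K}$ with $[A,K]$ and applying \cref{A<=[AK] then A=1} is routine.
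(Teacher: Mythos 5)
Your proof is correct and follows essentially the same route as the paper: express the finitely many generators of $A$ as products of conjugated commutators, absorb all elements of $G$ that occur into a finitely generated (hence nilpotent) subgroup $K$ containing $A$, and conclude via \cref{A<=[AK] then A=1}. The only real difference is cosmetic — the paper applies that lemma to $A^H$ and thereby sidesteps any normality discussion, whereas you first identify $\gen{[A,K]}^K$ with $[A,K]$; this identification is valid, but your justification that $[A,K,K]\leq[A,K]$ holds ``trivially'' is circular as written (it is equivalent to the normality you are proving), and the clean reason is the standard commutator-identity fact that $[H,K]$ is normalized by $\gen{H,K}$, which needs no nilpotency.
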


\begin{proof}
The implication starting with $|A|=1$ is obvious.
So suppose $A\leq \gen{[A,G]}^G$ and $A=\gen{a_1,\ldots,a_n}$.
Then for $1\leq i\leq n$, there exist $\ell_i\in\NN$ and elements
$a_{ij}\in A$, $g_{ij},h_{ij}\in G$ such that
\begin{equation} \label{eqn:normal-closure}
 a_i = [a_{i1},g_{i1}]^{h_{i1}} \cdots [a_{i\ell_i},g_{i\ell_i}]^{h_{i\ell_i}}.
\end{equation}
We now define the finitely generated subgroup
\[
H:=\gen{ h_{ij}, g_{ij} \mid 1\leq i\leq n, 1\leq j\leq \ell_i }.
\]
Moreover, we set $K:=\gen{A, H}$, and observe that
\[ A^H := \gen{A}^H =\gen{a^h\mid a\in A, h\in H} \leq K. \]
Since $A$ and $H$ are finitely generated, so is $K$, hence $K$ is nilpotent.
From \Cref{eqn:normal-closure} we then conclude
$A\leq [A^H,H]$
hence
$A^H\leq [A^H,H] \leq [A^H,K]$.
Applying \cref{A<=[AK] then A=1}, we conclude that $A^H=1$.
Hence $A=1$.
\end{proof}

\begin{lemma} \label{lem:100}
Let $n\in\ZZ$. Then there is
$y_{n-1}\in Y_{n-1,\infty}$ such that 
\[ Y_{n-1,\infty} = \gen{ y_{n-1}, Y_{n,\infty} }. \]
Moreover, for any $N\geq \w(y_{n-1})-2$, we have
\[ Y_{n-1,n+N} = \gen{ y_{n-1}, Y_{n,n+N} }.\]
\end{lemma}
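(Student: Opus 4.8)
The statement concerns a shift-invariant subgroup $Y \leq X$ and its pieces $Y_{n,m} = Y \cap X_{n,m}$. We must produce a single element $y_{n-1} \in Y_{n-1,\infty}$ that, together with $Y_{n,\infty}$, generates all of $Y_{n-1,\infty}$, and show moreover that the analogous finite statement holds with $Y_{n,n+N}$ once $N$ is large enough. The natural approach is to mimic the width-induction arguments already used in \cref{MaxProposition5.7(3)} and \cref{MaxProposition7.1(2)}: pick $y_{n-1}$ to be an element of $Y_{n-1,\infty}$ of minimal width subject to $\mathbf{n}(y_{n-1}) = n-1$ (if no such element exists, i.e.\ $Y_{n-1,\infty} = Y_{n,\infty}$, take $y_{n-1} = 1$ and both assertions are trivial). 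Since $Y$ is shift-invariant the relevant data only depend on $n$ modulo $2$, so such a minimal-width element can be obtained by shifting the generators $a,b$ from \cref{shift-inv-gens}; concretely $y_{n-1}$ will be $t^{k}(a)$ or $t^{k}(b)$ for the appropriate $k$, possibly after multiplying by a power of the other generator to clear the leading index — but we do not actually need an explicit formula, only existence of a minimal-width element with starting index $n-1$.

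For the first assertion, let $U := \gen{y_{n-1}, Y_{n,\infty}}$; clearly $U \leq Y_{n-1,\infty}$. To prove equality, take $y \in Y_{n-1,\infty}$ and induct on $\mathbf{w}(y)$. If $\mathbf{n}(y) \geq n$ then $y \in Y_{n,\infty} \leq U$. Otherwise $\mathbf{n}(y) = n-1 = \mathbf{n}(y_{n-1})$, and by minimality of $\mathbf{w}(y_{n-1})$ we have $\mathbf{w}(y) \geq \mathbf{w}(y_{n-1})$; then \cref{cancel start} supplies $\lambda \in \ZZ_p^*$ with $\mathbf{n}(y_{n-1}^\lambda y) > n-1$ and $\mathbf{w}(y_{n-1}^\lambda y) < \max(\mathbf{w}(y), \mathbf{w}(y_{n-1})) = \mathbf{w}(y)$, so in particular $y_{n-1}^\lambda y \in Y_{n-1,\infty}$ has strictly smaller width; by the induction hypothesis $y_{n-1}^\lambda y \in U$, and since $y_{n-1} \in U$ we get $y \in U$. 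This is almost verbatim the argument in \cref{MaxProposition7.1(2)}(i).

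For the second, ``finite'' assertion, the same induction works provided the cancellation step never forces us outside the window $[n, n+N]$ on the right. Write $w := \mathbf{w}(y_{n-1})$, so $\mathbf{m}(y_{n-1}) = (n-1) + w - 1 = n + w - 2$. Given $N \geq w - 2$, take $y \in Y_{n-1,n+N}$ and run the induction on $\mathbf{w}(y)$: in the nontrivial case $\mathbf{n}(y) = n-1$, the element $y_{n-1}^\lambda y$ from \cref{cancel start} has $\mathbf{n} > n-1$ and smaller width; since $\mathbf{m}(y_{n-1}^\lambda y) \leq \max(\mathbf{m}(y), \mathbf{m}(y_{n-1})) \leq \max(n+N, n+w-2) = n+N$ (using $N \geq w-2$), it lies in $Y_{n,n+N}$ once we also note $\mathbf{n}(y_{n-1}^\lambda y) \geq n$. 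So $y_{n-1}^\lambda y \in \gen{y_{n-1}, Y_{n,n+N}}$ by induction, whence $y \in \gen{y_{n-1}, Y_{n,n+N}}$.

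**Main obstacle.** The only subtle point is bounding $\mathbf{m}$ under multiplication: \cref{cancel start} as stated controls $\mathbf{n}$ and $\mathbf{w}$ of $y_{n-1}^\lambda y$, and from $\mathbf{n}(y_{n-1}^\lambda y) > n-1$ together with $\mathbf{w}(y_{n-1}^\lambda y) \le \mathbf{w}(y)$ one should read off $\mathbf{m}(y_{n-1}^\lambda y) \le \mathbf{m}(y) \le n+N$ directly, without needing the crude $\max$ estimate — so I would double-check that \cref{cancel start} (or the normal-form bookkeeping behind it) indeed gives $\mathbf{m}(y_{n-1}^\lambda y) \le \max(\mathbf{m}(y_{n-1}), \mathbf{m}(y))$, which is immediate since a product of two elements supported in $[\ast, M]$ is supported in $[\ast, M]$. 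Everything else is a routine width induction of exactly the type already carried out twice in the preceding sections, so no genuinely new idea is required; the real content is just the careful choice $N \geq \mathbf{w}(y_{n-1}) - 2$ so that the window is wide enough to absorb the leading generator.
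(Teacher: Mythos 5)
Your proof is correct and follows essentially the same route as the paper: pick $y_{n-1}$ with $\n(y_{n-1})=n-1$, cancel leading terms via \cref{cancel start}, and observe that $\m(y_{n-1})=n+(\w(y_{n-1})-2)\le n+N$ keeps everything inside the window for the second assertion. The only difference is that your minimal-width choice and width induction are superfluous: since $Y_{n,\infty}=Y\cap X_{n,\infty}$ is defined by intersection, a single application of \cref{cancel start} already lands $y_{n-1}^{\lambda}y$ in $Y_{n,\infty}$, so no induction is needed.
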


\begin{proof}
If $Y_{n-1,\infty}= Y_{n,\infty}$ set $y_{n-1}:=1$ and the first assertion
clearly holds. Otherwise there exists $y_{n-1}\in Y_{n-1,\infty}$ with
$\n(y_{n-1})=n-1$.  Let $y\in Y_{n-1,\infty}$. If $\n(y)\geq n$, then $y\in
Y_{n,\infty}$.  Otherwise, if $\n(y)=n-1$, then by \cref{cancel start} there
is $\lambda\in\ZZ_p^*$ such that $\n(y_{n-1}^\lambda y)\geq n$, hence
$y\in \gen{ y_{n-1}, Y_{n,\infty} }$.
Thus $Y_{n-1,\infty} \leq \gen{y_{n-1}, Y_{n,\infty}}$.
The reverse inclusion is obvious.

The second assertion follows analogously, after observing that $y_{n-1}\in Y_{n-1,n+N}$.
Indeed, $\n(y_{n-1})=n-1$ and $\m(y_{n-1})=\w(y_{n-1})+\n(y_{n-1})-1 = n + (\w(y_{n-1})-2)$.
\end{proof}

\begin{lemma} \label{lem:105}
Let $n\in\ZZ$.
\begin{enumerate}
\item If $Y_{n-1,\infty}\leq\gen{Y_{n,\infty}}^X$, then there is
      $M\in\NN$ such that $Y_{n-1,n+N}\leq\gen{Y_{n,n+N}}^X$ for all $N \geq M$.
\item If $Y_{-\infty,n+1}\leq\gen{Y_{-\infty,n}}^X$, then there is
      $M\in\NN$ such that $Y_{n-N,n+1}\leq\gen{Y_{n-N,n}}^X$ for all $N \geq M$.
\end{enumerate}
\end{lemma}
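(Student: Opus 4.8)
The plan is to prove assertion (i) directly; assertion (ii) then follows by the symmetric argument (reflecting indices, which is compatible with the $\ZZ$-system structure). So assume $Y_{n-1,\infty}\leq\gen{Y_{n,\infty}}^X$. First I would invoke \cref{lem:100} to obtain an element $y_{n-1}\in Y_{n-1,\infty}$ with $Y_{n-1,\infty}=\gen{y_{n-1},Y_{n,\infty}}$, and set $N_0:=\w(y_{n-1})-2$, so that by the second part of \cref{lem:100} we have $Y_{n-1,n+N}=\gen{y_{n-1},Y_{n,n+N}}$ for every $N\geq N_0$. Thus the whole problem reduces to showing that the single element $y_{n-1}$ lies in $\gen{Y_{n,n+N}}^X$ once $N$ is large enough. (If $y_{n-1}=1$ there is nothing to prove, so assume $\n(y_{n-1})=n-1$.)

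Next, the hypothesis $y_{n-1}\in Y_{n-1,\infty}\leq\gen{Y_{n,\infty}}^X$ means $y_{n-1}$ can be written as a finite product of $X$-conjugates of elements of $Y_{n,\infty}$, say $y_{n-1}=\prod_{j=1}^{\ell}(w_j)^{g_j}$ with $w_j\in Y_{n,\infty}$ and $g_j\in X$. Because $X$ is locally nilpotent and each $w_j$ and $g_j$ has finite width, there are $n'\leq m'\in\ZZ$ such that all the $w_j$ and all the $g_j$ lie in $X_{n',m'}$; moreover by enlarging $m'$ we may assume each $w_j\in Y_{n,m'}$. Set $M:=\max(N_0,\, m'-n)$. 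For $N\geq M$ we then have $Y_{n,m'}\subseteq Y_{n,n+N}$ and each $g_j\in X_{n',m'}\leq X$, so every conjugate $(w_j)^{g_j}$ lies in $\gen{Y_{n,n+N}}^X$, and hence so does their product $y_{n-1}$. Combined with $Y_{n-1,n+N}=\gen{y_{n-1},Y_{n,n+N}}$ and the obvious containment $Y_{n,n+N}\leq\gen{Y_{n,n+N}}^X$, this gives $Y_{n-1,n+N}\leq\gen{Y_{n,n+N}}^X$ for all $N\geq M$, as required.

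The one point needing a little care — and the only real obstacle — is making sure the bound $M$ can be chosen uniformly, i.e. independent of $N$: this works precisely because the expression of $y_{n-1}$ as a product of conjugates is a \emph{single} fixed finite product, so only finitely many elements $w_j,g_j$ of $X$ (each of finite width) are involved, and a finite set of finite-width elements is contained in some $X_{n',m'}$. There is no circularity with $N$ because we fix the product first and then choose $M$. Assertion (ii) is handled identically, using the second part of \cref{lem:100} with the roles of "start" and "end" interchanged (equivalently, applying (i) to the $\ZZ$-system $(X,(x_{-n})_{n\in\ZZ})$, which is again a $\ZZ$-system with shift automorphism $t^{-1}$).
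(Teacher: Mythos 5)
Your argument is correct and follows essentially the same route as the paper: reduce via \cref{lem:100} to the single generator $y_{n-1}$, expand it as a fixed finite product of conjugates of elements of $Y_{n,\infty}$, and take $M$ large enough to cover both the finitely many $\m(w_j)$ and the bound $\w(y_{n-1})-2$ needed for the second part of \cref{lem:100}. (The only cosmetic difference is that you also bound the conjugating elements $g_j$, which is unnecessary since the normal closure is taken over all of $X$.)
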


\begin{proof}
We prove the first case, the second follows by a symmetric argument.
Suppose $Y_{n-1,\infty}\leq\gen{Y_{n,\infty}}^X$.
If $Y_{n-1,\infty}=Y_{n,\infty}$ we are done, as then
$Y_{n-1,n+N}=Y_{n,n+N}$ for all $N\in\NN$.
Otherwise, let $y_{n-1}\neq 1$ be as in \cref{lem:100}.
Since $y_{n-1}\in Y_{n-1,\infty}\leq\gen{Y_{n,\infty}}^X$, there are
$\ell\in\NN$, $a_1,\ldots,a_\ell\in Y_{n,\infty}^*$ and $g_1,\ldots,g_\ell\in X$
such that $y_{n-1} = a_1^{g_1}\cdots a_\ell^{g_\ell}$.
Let $M:=\max\{\m(a_1),\ldots,\m(a_\ell),\m(y_{n-1})\}-n$.
Then $a_1,\ldots,a_\ell\in Y_{n,n+M}$, hence for $N\geq M$ we have 
\[ y_{n-1} \in \gen{Y_{n,n+M}}^X \leq \gen{Y_{n,n+N}}^X.\]
Moreover, by definition 
\[ M\geq \m(y_{n-1}) - n = \m(y_{n-1}) - \n(y_{n-1}) - 1 = \w(y_{n-1}) - 2. \]
Thus for $N\geq M$, \cref{lem:100} yields
\[ Y_{n-1,n+N} = \gen{y_{n-1}, Y_{n,n+N}} \leq \gen{Y_{n,n+N}}^X.
\qedhere
\]
\end{proof}

\begin{lemma}\label{finite cut}\ \nopagebreak
\begin{enumerate}
\item If $Y_{n-1,\infty}\leq\gen{Y_{n,\infty}}^X$ for all $n\in\ZZ$,
then there is $M\in\NN$ with $Y_{-\infty,M}\leq \gen{Y_{0,M}}^X$.
\item If $Y_{-\infty,n+1}\leq\gen{Y_{-\infty,n}}^X$ for all $n\in\ZZ$,
then there is $M\in\NN$ with $Y_{0,\infty}\leq \gen{Y_{0,M}}^X$.
\end{enumerate}
\end{lemma}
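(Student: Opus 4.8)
The plan is to prove assertion (i) and note that (ii) follows by the symmetric argument (reflecting indices). The key observation is that the hypothesis $Y_{n-1,\infty}\leq\gen{Y_{n,\infty}}^X$ holds for \emph{every} $n\in\ZZ$, but by shift-invariance of $Y$ the ``shape'' of the situation at index $n$ is just a shifted copy of the situation at index $n-2$. Concretely, \cref{lem:105}(i) gives, for each $n$, a bound $M(n)\in\NN$ such that $Y_{n-1,n+N}\leq\gen{Y_{n,n+N}}^X$ for all $N\geq M(n)$. By shift-invariance $t(Y_{n-1,\infty})=Y_{n+1,\infty}$ and $t$ is an automorphism of $X$ carrying $\gen{Y_{n,\infty}}^X$ to $\gen{Y_{n+2,\infty}}^X$, so the element $y_{n-1}$ produced in \cref{lem:100} can be chosen compatibly with the shift, i.e.\ $y_{n-3}=t^{-1}(y_{n-1})$ and hence $\w(y_{n-1})$, and thus the bound $M(n)$, depends only on the parity of $n$. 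Therefore there is a single $M_0\in\NN$ (the max of the two parity-bounds) such that
\[ Y_{n-1,n+N}\leq\gen{Y_{n,n+N}}^X \quad\text{for all }n\in\ZZ\text{ and all }N\geq M_0. \]

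Having secured a uniform bound, I would fix $M:=M_0$ and prove $Y_{-k,M}\leq\gen{Y_{0,M}}^X$ by induction on $k\geq 0$, which gives the claim since $Y_{-\infty,M}=\bigcup_{k\geq 0}Y_{-k,M}$ (every element of $Y_{-\infty,M}$ has finite width, hence lies in some $Y_{-k,M}$). The base case $k=0$ is trivial. For the inductive step, apply the uniform statement with $n:=-k$ and $N:=M+k\geq M_0$: this yields $Y_{-k-1,M}=Y_{n-1,n+N}\leq\gen{Y_{n,n+N}}^X=\gen{Y_{-k,M}}^X$. Since by the induction hypothesis $Y_{-k,M}\leq\gen{Y_{0,M}}^X$, and the normal closure operation is monotone and idempotent ($\gen{\gen{A}^X}^X=\gen{A}^X$), we get $\gen{Y_{-k,M}}^X\leq\gen{Y_{0,M}}^X$, completing the step.

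The main obstacle is the uniformity claim in the first paragraph: a priori the bound $M(n)$ coming from \cref{lem:105} could grow with $|n|$, in which case no single $M$ works and the induction collapses. Resolving this requires genuinely invoking shift-invariance — one must check that the element $y_{n-1}$ in \cref{lem:100} and the expression $y_{n-1}=a_1^{g_1}\cdots a_\ell^{g_\ell}$ in the proof of \cref{lem:105} can be transported by powers of $t$, so that the relevant widths $\w(y_{n-1})$ take only two values across all $n$. Everything else (the induction, the passage from the finite cuts $Y_{-k,M}$ to $Y_{-\infty,M}$) is routine once this uniformity is in hand.
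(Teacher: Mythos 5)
Your proof is correct, and it reaches the goal by a slightly different route than the paper. The paper first uses the hypothesis to get $Y_{-2,\infty}\leq\gen{Y_{0,\infty}}^X$, then invokes \cref{shift-inv-gens} to reduce everything to the two generators $a,b$ of $Y$, places them in $\gen{Y_{0,M}}^X$ by two applications of \cref{lem:105}, and finally iterates in steps of two via the explicit conjugation $Y_{-4,M}=t^{-1}(Y_{-2,M+2})$. You instead bypass \cref{shift-inv-gens} entirely and upgrade \cref{lem:105} to a bound uniform in $n$, then run a one-step induction using monotonicity and idempotence of the normal closure. Your uniformity claim is the only point needing care, and it does hold; in fact you can get it as a black box rather than by re-inspecting the proof of \cref{lem:105}: apply that lemma only at $n=0$ and $n=1$ to obtain $M(0),M(1)$, and transport the \emph{conclusions} by powers of $t$, using that $t$ is an automorphism of $X$ with $t^k(Y_{a,b})=Y_{a+2k,b+2k}$ and hence $t^k(\gen{Y_{a,b}}^X)=\gen{Y_{a+2k,b+2k}}^X$; taking $M_0:=\max\{M(0),M(1)\}$ gives your uniform bound without touching the internal data $y_{n-1}=a_1^{g_1}\cdots a_\ell^{g_\ell}$. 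Both arguments ultimately rest on the same principle (shift-invariance makes the finite truncation bound uniform), so the trade-off is mainly one of packaging: the paper leans on the two-generator structure of shift-invariant subgroups, while your version keeps the induction purely at the level of truncated normal closures.
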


\begin{proof}
We prove the first case, the second follows by a symmetric argument.
The hypothesis implies for all $n\in\NN$ that
\[
Y_{n-2,\infty}\leq\gen{Y_{n-1,\infty}}^X
\quad\text{ and }\quad
Y_{n-1,\infty}\leq\gen{Y_{n,\infty}}^X,
\quad\text{ hence }\quad
Y_{n-2,\infty}\leq\gen{Y_{n,\infty}}^X.
 \]
Thus for all $n,k\in\NN$ we have
\[ Y_{n-k,\infty}\leq\gen{Y_{n,\infty}}^X.\]
But this implies $Y\leq\gen{Y_{n,\infty}}^X$. By \cref{shift-inv-gens}, 
there are elements $a,b\in Y$ such that $Y=\gen{t^k(a), t^k(b) \mid k\in \ZZ}$.
As $Y$ is shift-invariant, we may assume $\n(a)=-2$ or $a=1$, and $\n(b)=-1$ or $b=1$.
Then
\[ a,b\in Y_{-2,\infty}\leq \gen{Y_{0,\infty}}^X.\]
By applying \cref{lem:105} twice we deduce the existence of some value
$M\in \NN$ such that $a,b\in \gen{Y_{0,M}}^X$.
But then for all $N\geq M$
\begin{equation} \label{eqn:Y_{-2}-sub-Y_0^X}
Y_{-2,N} = \gen{a,b,Y_{0,N}}\leq \gen{Y_{0,N}}^X.
\end{equation}
By shift-invariance, we can now conclude that
\[Y_{-4,M}
= t^{-1}(Y_{-2,M+2})
\overset{(\ref{eqn:Y_{-2}-sub-Y_0^X})}\leq t^{-1}(\gen{Y_{0,M+2}}^X)
= \gen{Y_{-2,M}}^X
\overset{(\ref{eqn:Y_{-2}-sub-Y_0^X})}\leq \gen{Y_{0,M}}^X
.\]
By induction it follows that $Y_{-\infty,M}\leq \gen{Y_{0,M}}^X$.
\end{proof}

\begin{lemma}\label{lem:110}
If for all $n\in\ZZ$ we have 
\[
Y_{n-1,\infty}\leq\gen{Y_{n,\infty}}^X
\quad\text{ and }\quad
Y_{-\infty,n+1}\leq\gen{Y_{-\infty,n}}^X
\]
then there exists $M\in\NN$ such that $Y\leq\gen{Y_{0,M}}^X$.
\end{lemma}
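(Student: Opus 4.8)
The plan is to split the statement into an ``upward'' bound coming from the second hypothesis and a ``downward saturation'' coming from the first, and then to glue the two together inside a normal closure. The crucial observation is that one should \emph{not} try to combine \cref{finite cut}\,(i), \cref{finite cut}\,(ii) and \cref{decompose shift inv} directly: that route would force one to compare the finite cut produced by \cref{finite cut} with the cut produced by \cref{decompose shift inv}, and these are manufactured independently and need not be comparable. Instead I route everything through the infinite-index group $Y_{0,\infty}$ and its normal closure in $X$, so that the first hypothesis gets used directly rather than via its finite-cut consequence.

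First I would prove, by induction on $k\ge 0$, that $Y_{-k,\infty}\le\gen{Y_{0,\infty}}^X$. The base case $k=0$ is trivial. For the inductive step, the first hypothesis applied with $n=-k$ gives $Y_{-k-1,\infty}\le\gen{Y_{-k,\infty}}^X$; since $\gen{Y_{0,\infty}}^X$ is a normal subgroup of $X$ containing $Y_{-k,\infty}$ by the inductive hypothesis, it also contains $\gen{Y_{-k,\infty}}^X$, and hence $Y_{-k-1,\infty}$. Because the subgroups $X_{-k,\infty}$ form an ascending chain with union $X$, we have $Y=\bigcup_{k\ge0}Y_{-k,\infty}$, and therefore $Y\le\gen{Y_{0,\infty}}^X$.

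Next I would invoke \cref{finite cut}\,(ii): the second hypothesis is exactly its assumption, so it provides some $M\in\NN$ with $Y_{0,\infty}\le\gen{Y_{0,M}}^X$. Passing to normal closures in $X$ gives $\gen{Y_{0,\infty}}^X\le\gen{Y_{0,M}}^X$. Combining this with the previous step yields $Y\le\gen{Y_{0,\infty}}^X\le\gen{Y_{0,M}}^X$, which is the claim.

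I do not anticipate a genuine obstacle here; the argument is short once the right reduction is made. The only points requiring care are the bookkeeping flagged in the first paragraph — resisting the temptation to patch the two halves of \cref{finite cut} together with \cref{decompose shift inv} — together with the elementary but essential facts that a normal subgroup of $X$ containing a set $S$ contains $\gen{S}^X$, and that $Y$ is the ascending union of the subgroups $Y_{-k,\infty}$. (A fully symmetric variant also works, using the second hypothesis iterated downward to get $Y\le\gen{Y_{-\infty,0}}^X$ and then \cref{finite cut}\,(i) to bound $Y_{-\infty,0}$ from below; I would present only one of the two.)
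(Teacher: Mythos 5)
Your proof is correct. It differs mildly from the paper's, which simply combines both parts of \cref{finite cut}: part (i) gives $Y_{-\infty,M}\leq\gen{Y_{0,M}}^X$, part (ii) gives $Y_{0,\infty}\leq\gen{Y_{0,M'}}^X$, and one glues these along a decomposition $Y=Y_{-\infty,m}Y_{0,\infty}$ after enlarging the cutoffs (the containments in the proof of \cref{finite cut} are stable under increasing $M$, so the comparability worry you raise is real but harmless). Your route uses only part (ii) of \cref{finite cut}, replacing part (i) by a direct induction showing $Y\leq\gen{Y_{0,\infty}}^X$; note that this induction is word-for-word the first half of the paper's own proof of \cref{finite cut}\,(i) (``Thus for all $n,k\in\NN$ we have $Y_{n-k,\infty}\leq\gen{Y_{n,\infty}}^X$. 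But this implies $Y\leq\gen{Y_{n,\infty}}^X$''). What your version buys is that the final gluing becomes a transparent chain $Y\leq\gen{Y_{0,\infty}}^X\leq\gen{Y_{0,M}}^X$ with no need to match two independently produced cutoffs; what it costs is a small amount of duplication of an argument already present inside \cref{finite cut}. Both the ascending-union fact $Y=\bigcup_{k\geq 0}Y_{-k,\infty}$ (every element of $X$ lies in some $X_{-k,\infty}$ by (ZS6)) and the fact that a normal subgroup containing $S$ contains $\gen{S}^X$ are used correctly.
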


\begin{proof}
This is an immediate consequence of \cref{finite cut}.
\end{proof}

\begin{prop}\label{[XY]=Y}
Suppose $[X,Y]=Y$. Then either $Y=1$, or there exists $n\in\ZZ$ such that
at least one of the following holds:
\begin{enumerate}
\item $Y_{n-1,\infty}\nleq\gen{Y_{n,\infty}}^X$.
\item $Y_{-\infty,n+1}\nleq\gen{Y_{-\infty,n}}^X$.
\end{enumerate}
\end{prop}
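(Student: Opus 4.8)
The plan is to prove the contrapositive: assuming $Y \neq 1$ and that for every $n \in \ZZ$ both $Y_{n-1,\infty} \leq \gen{Y_{n,\infty}}^X$ and $Y_{-\infty,n+1} \leq \gen{Y_{-\infty,n}}^X$ hold, I would derive that $[X,Y] \neq Y$. The key mechanism is \cref{lem:110}, which under exactly these hypotheses produces an $M \in \NN$ with $Y \leq \gen{Y_{0,M}}^X$. The point of this is that $Y_{0,M} = Y \cap X_{0,M}$ is a \emph{finitely generated} subgroup of $Y$ (it sits inside the finite $p$-group $X_{0,M}$), so once $Y$ is contained in a normal closure of a finitely generated subgroup, we are in a position to invoke the local-nilpotency machinery of \cref{A-notin-[AU]_U}.

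First I would set $A := Y_{0,M}$, which is finitely generated, and observe that $Y \leq \gen{A}^X$. Now suppose for contradiction that $[X,Y] = Y$. Then in particular $A \leq Y = [X,Y]$. I want to feed this into \cref{A-notin-[AU]_U} with $G = X$: that lemma says $A \leq \gen{[A,X]}^X$ forces $|A| = 1$. So the task reduces to showing $A \leq \gen{[A,X]}^X$. Here is where the inclusion $Y \leq \gen{A}^X$ is used in the other direction: since $[X,Y] = Y$, we get
\[
A \leq Y = [X,Y] \leq [X,\gen{A}^X].
\]
Now $[X,\gen{A}^X]$ is generated by commutators $[g, a^h]$ with $g \in X$, $a \in A$, $h \in X$; each such commutator lies in $\gen{[A,X]}^X$ because $[g,a^h] = [g,(a^h)] $ and conjugating back, $[A,X]^{X}$ is a normal subgroup containing all $X$-conjugates of $[A,X]$, hence contains $[X, A^X]$. (More carefully: $[X, \gen{A}^X]$ is a normal subgroup of $X$, it is generated by $X$-conjugates of elements of $[X,A]=[A,X]$, so it equals $\gen{[A,X]}^X$.) Thus $A \leq \gen{[A,X]}^X$, and \cref{A-notin-[AU]_U} gives $A = 1$, i.e.\ $Y_{0,M} = 1$.

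The final step is to promote $Y_{0,M} = 1$ to $Y = 1$, contradicting our standing assumption $Y \neq 1$. From $Y \leq \gen{Y_{0,M}}^X = \gen{1}^X = 1$ we immediately get $Y = 1$. This closes the contradiction and proves the contrapositive, hence the proposition. The main obstacle I anticipate is the bookkeeping in identifying $[X,\gen{A}^X]$ with $\gen{[A,X]}^X$ — this is the standard fact that $[G, N^G] = [G,N]^G$ for the normal closure, but it should be stated cleanly; everything else is an assembly of \cref{lem:110} and \cref{A-notin-[AU]_U}, both already available, together with the trivial observation that $Y_{0,M}$ is finitely generated because it embeds into the finite group $X_{0,M}$.
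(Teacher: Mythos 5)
Your proposal is correct and follows the paper's proof in all essentials: you argue the contrapositive, invoke \cref{lem:110} to obtain $Y\leq\gen{Y_{0,M}}^X$, and then feed a finitely generated piece of $Y$ into \cref{A-notin-[AU]_U} using the local nilpotency of $X$. The only (harmless) variation is that you apply \cref{A-notin-[AU]_U} directly to $A=Y_{0,M}$ via the identity $[X,\gen{A}^X]=\gen{[A,X]}^X$, whereas the paper first writes the finitely many generators of $Y_{0,M}$ as products of commutators $[g_{ij},y_{ij}]$ with $y_{ij}\in Y$, encloses all the $y_{ij}$ in a finite $Y_{n,m}$, and applies the lemma to $A=Y_{n,m}$ instead.
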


\begin{proof}
Suppose the claim is false. Then by \cref{lem:110}
there is $M\in\NN$ such that $Y\leq \gen{Y_{0,M}}^X$.
The group $Y_{0,M}$ is finite, so we can pick
a finite generating set $Y_{0,M}=\gen{z_1,\ldots,z_k}$.
Then for $1\leq i\leq k$, since $z_i\in Y=[X,Y]$, 
there are $\ell_i\in\NN$ and $y_{ij}\in Y$, $g_{ij}\in X$
for $1\leq j\leq \ell_i$ such that
\[ z_i = [g_{i1},y_{i1}]\cdots[g_{i\ell_i},y_{i\ell_i}].\]
Let 
\begin{align*}
n&:=\min\{\n(y_{ij}) \mid 1\leq i\leq k, 1\leq j\leq \ell_j \},\\
m&:=\max\{\m(y_{ij}) \mid 1\leq i\leq k, 1\leq j\leq \ell_j \}.
\end{align*}
Then $z_i\in [X,Y_{n,m}]$, hence $Y_{0,M}\subseteq [X,Y_{n,m}]$.
But then
\begin{equation} \label{eqn Ynm=1}
Y_{n,m} \leq Y \leq \gen{Y_{0,M}}^X \leq \gen{[X,Y_{n,m}]}^X.
\end{equation}
Since $X$ is locally nilpotent, \cref{A-notin-[AU]_U} implies $Y_{n,m}=1$.
Inserting this into \Cref{eqn Ynm=1} yields $Y=1$.
\end{proof} 

\begin{lemma}\label{normal-aux}
Suppose $Y\unlhd X$ and $|X:Y| =\infty$. Then the following hold.
\begin{enumerate}
\item If $Y_{n,\infty}\nleq\gen{Y_{n+1,\infty}}^X$ for some $n\in\ZZ$,
then $Y_{n,\infty}\unlhd X$.
\item If $Y_{-\infty,n}\nleq\gen{Y_{-\infty,n-1}}^X$ for some $n\in\ZZ$,
then $Y_{-\infty,n}\unlhd X$.
\end{enumerate}
\end{lemma}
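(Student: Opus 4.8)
The plan is to prove the first assertion; the second follows by the symmetric argument (replacing $t$ by $t^{-1}$ and reversing the roles of lower and upper cutoffs). So suppose $Y \unlhd X$, that $|X:Y| = \infty$, and that $Y_{n,\infty} \nleq \gen{Y_{n+1,\infty}}^X$ for some fixed $n \in \ZZ$. I want to show $Y_{n,\infty} \unlhd X$, i.e.\ $[X, Y_{n,\infty}] \leq Y_{n,\infty}$. Since $X = \gen{x_k \mid k \in \ZZ}$, it suffices to show $[x_k, y] \in Y_{n,\infty}$ for every generator $x_k$ and every $y \in Y_{n,\infty}$.

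First I would record what the hypothesis $Y_{n,\infty} \nleq \gen{Y_{n+1,\infty}}^X$ gives us via \cref{lem:100}: there is an element $y_n \in Y_{n,\infty}$ with $\n(y_n) = n$ and $Y_{n,\infty} = \gen{y_n, Y_{n+1,\infty}}$, and moreover $y_n \notin \gen{Y_{n+1,\infty}}^X$. Since $Y \unlhd X$, for any $x_k$ we have $[x_k, y_n] \in Y$. The key point is to pin down $\n([x_k,y_n])$. Because $\n(y_n) = n$, applying \cref{cancel start}-type reasoning (the element $[x_k, y_n] = y_n^{-1} y_n^{x_k}$, and both $y_n$ and its conjugate involve $x_n$ with the same lead exponent since conjugation by $x_k$ cannot change the lead term when $k \geq n$, or by (ZS5)-type support bounds when $k < n$) one should get $\n([x_k, y_n]) \geq n$, so that $[x_k, y_n] \in Y_{n,\infty}$ — EXCEPT one must rule out the possibility that $[x_k,y_n] \in Y$ has start-index $< n$. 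Suppose for contradiction that $z := [x_k, y_n] \in Y$ has $\n(z) = j < n$. Then, using shift-invariance of $Y$ and the fact that $Y \unlhd X$ so $Y_{-\infty, m} Y_{n,\infty} = Y$ (as in \cref{decompose shift inv}), together with the normality of $Y$, one derives that $y_n$ is congruent modulo $\gen{Y_{n+1,\infty}}^X$ to a product of conjugates coming from strictly-lower-index pieces, and feeding this back through the structure should force $y_n \in \gen{Y_{n+1,\infty}}^X$, contradicting the hypothesis.

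The cleanest way to organize the last step is probably this: set $Z := \gen{Y_{n+1,\infty}}^X$, note $Z \unlhd X$ and $Z \leq Y_{n,\infty} \leq Y$. Pass to $\bar{X} := X / Z$ and $\bar{Y} := Y / Z$. In $\bar X$ the image $\bar y_n$ is nontrivial (that is exactly the hypothesis), $\bar Y_{n,\infty} = \gen{\bar y_n}$ is cyclic of order $p$, and $\bar Y$ is still normal in $\bar X$. Now $[\bar X, \bar y_n] \leq \bar Y$, and I claim $[\bar X, \bar y_n] \leq \gen{\bar y_n}$: if not, commuting $\bar y_n$ with some $x_k$ produces an element of $\bar Y$ not in $\gen{\bar y_n}$, hence of start-index $\leq n$ in the $\ZZ$-system sense but not a power of $\bar y_n$; combining this with shift-invariance (the shift $t$ acts on $\bar X$ since $Z$ is shift-invariant) and the minimal-width characterization of generators from \cref{shift-inv-gens} / \cref{MaxProposition7.1(2)}, one shows $\bar Y$ would then have an element of start-index strictly below every bound, which is absurd in a $\ZZ$-system. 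Once $[\bar X, \bar y_n] \leq \gen{\bar y_n}$, lift back: $[x_k, y_n] \in \gen{y_n, Z} \leq Y_{n,\infty}$ for all $k$, and since $Y_{n,\infty} = \gen{y_n, Y_{n+1,\infty}}$ with $Y_{n+1,\infty} = t(Y_{n-1,\infty}) \cap \dots$ handled by shift-invariance and induction, we get $[X, Y_{n,\infty}] \leq Y_{n,\infty}$.

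The main obstacle I anticipate is the start-index bookkeeping in the middle step — precisely, showing that $[x_k, y_n]$ cannot "reach below" index $n$ without contradicting $y_n \notin \gen{Y_{n+1,\infty}}^X$. The subtlety is that conjugation by $x_k$ for $k < n$ really can introduce terms of index between $k+1$ and $n-1$ (by (ZS5)), so the naive claim "$\n([x_k,y_n]) \geq n$" is false in general; what saves us is that all such lower-index contributions, being commutators, land inside $Y$ and can be absorbed using normality of $Y$ together with the defining property of $y_n$ as the unique (mod $Z$) start at index $n$. Making this absorption argument precise — essentially an induction on the "depth below $n$" that a putative counterexample reaches, each step pushing it into $\gen{Y_{n+1,\infty}}^X$ via \cref{lem:105} and \cref{finite cut} — is the technical heart of the proof.
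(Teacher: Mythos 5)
There is a genuine gap: the step you yourself flag as ``the technical heart'' --- showing that $[x_k,y_n]$ cannot reach below index $n$ without forcing $y_n\in\gen{Y_{n+1,\infty}}^X$ --- is precisely what is not carried out, and the scaffolding you build around it does not supply a mechanism for it. Two concrete problems. First, your reduction to the quotient $\bar X=X/Z$ with $Z:=\gen{Y_{n+1,\infty}}^X$ rests on the assertion $Z\leq Y_{n,\infty}$. That is unjustified and essentially circular: a conjugate $u^g=u[u,g]$ of $u\in Y_{n+1,\infty}$ lies in $Y$ (by normality of $Y$) but there is no a priori reason its support stays in $[n,\infty)$; the containment $\gen{Y_{n+1,\infty}}^X\leq X_{n,\infty}$ is a consequence of the normality of $Y_{n,\infty}$ that you are trying to prove. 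Without it, ``$[x_k,y_n]\in\gen{y_n,Z}$'' does not yield $[x_k,y_n]\in Y_{n,\infty}$. Second, the argument inside $\bar X$ (``start-index strictly below every bound, which is absurd'') is not meaningful as stated, since the normal-form/start-index machinery lives in $X$, not in the quotient.

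The missing idea is \cref{MaxProposition5.7(3)}: because $|X:Y|=\infty$, the whole group $Y$ (not just $Y_{n,\infty}$) is an abelian $\ZZ$-system on the shifts $t^i(y)$ of a single element $y$ of minimal width, which one may normalize so that $\n(y)=n$; then $y\notin\gen{Y_{n+1,\infty}}^X$ by hypothesis. Working with $y_n$ from \cref{lem:100} instead loses the decisive structural control. The paper's argument for $m<n$ is: write $[x_m,y]=t^{i_1}(y)^{e_1}\cdots t^{i_s}(y)^{e_s}$ in the normal form of the $\ZZ$-system $(Y,(t^k(y)))$; since $\m([x_m,y])<\m(y)$, all $i_j$ are negative; applying $t^{-i_1}$ turns this into an expression for $y^{e_1}$ as a product of $[x_{m-2i_1},t^{-i_1}(y)]$ (a conjugacy-type element coming from $Y_{n+2,\infty}$) and shifts $t^{i_j-i_1}(y)$ with $j\geq 2$, all of which lie in $\gen{Y_{n+1,\infty}}^X$ --- contradicting $y\notin\gen{Y_{n+1,\infty}}^X$. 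Hence $[x_m,y]=1$ for all $m<n$, i.e.\ $X_{-\infty,n-1}$ \emph{centralizes} $Y_{n,\infty}=\gen{t^i(y)\mid i\in\NN}$; this is much stronger than $\n([x_m,y])\geq n$ and is what makes the final step $[X,Y_{n,\infty}]=[X_{n,\infty},Y_{n,\infty}]\leq Y\cap X_{n,\infty}=Y_{n,\infty}$ immediate. Your closing ``shift-invariance and induction'' step has no analogous handle, since the hypothesis $Y_{n,\infty}\nleq\gen{Y_{n+1,\infty}}^X$ holds only for the one fixed $n$ and gives you no control over $[x_k,u]$ for $u\in Y_{n+1,\infty}$.
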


\begin{proof}
We prove the first case, the second follows by a symmetric argument.
By \cref{MaxProposition5.7(3)} there is $y\in Y$ such that
$(Y,(t^k(y))_{k \in \ZZ})$ is a $\ZZ$-system.
Suppose now that there is $n\in\ZZ$ such that $Y_{n,\infty}\nleq\gen{Y_{n+1,\infty}}^X$.
Then as $Y$ is shift-invariant, we may assume that $\n(y)=n$, and so $y\in Y_{n,\infty}$
but $y\notin \gen{Y_{n+1,\infty}}^X$.

Suppose now that there is $m<n$ with $[x_m,y] \neq 1$.
Then there are integers $i_1 < \dots < i_s < 0$ and
exponents $e_1,\dots,e_s\in\ZZ_p^*$,
such that $[x_m,y] =t^{i_1}(y)^{e_1} \cdots t^{i_s}(y)^{e_s}$.
Applying $t^{-i_1}$ we get 
\[ [x_{m-2i_1},t^{-i_1}(y)] =y^{e_1} \cdot t^{i_2-i_1}(y)^{e_2} \cdots t^{i_s -i_1}(y)^{e_s} \]
and, since $-i_1$, $i_2-i_1$, \ldots, $i_s-i_1$ all are positive, we conclude
\[ y^{e_1} = [x_{m-2i_1}, t^{-i_1}(y)]
\cdot \left(t^{i_2-i_1}(y)^{e_2} \cdots t^{i_s -i_1}(y)^{e_s}\right)^{-1} \in \gen{Y_{n+1,\infty}}^X. \]
But $\n(y^{e_1})=\n(y)=n$, thus $Y_{n,\infty}=\gen{y^{e_1},Y_{n+1,\infty}} \leq\gen{Y_{n+1,\infty}}^X$,
contradicting the hypothesis. Therefore $[x_m,y] =1$ for all $m <n$.
Since $Y_{n,\infty} =\gen{t^i(y) \mid i\in\NN}$, we get
$X_{-\infty,n-1} \leq C_X(Y_{n,\infty})$ and so,
using that $Y\unlhd X$,
\[ [X,Y_{n,\infty}]
=[X_{n,\infty},Y_{n,\infty}]
\leq [X_{n,\infty},Y] \cap X_{n,\infty}
\leq Y \cap X_{n,\infty} = Y_{n,\infty}.
\qedhere
\]
\end{proof}

Thus we obtain the main result of this section:

\begin{prop} \label{normal}
Let $(X,(x_n)_{n \in \ZZ})$ be a $\ZZ$-system of prime order $p$.
Suppose $Y$ is a shift-invariant subgroup of $X$, with $|X:Y|=\infty$ and $[X,Y]=Y$.
Then there is $n\in\ZZ$ such that $Y_{n,\infty}\unlhd X$ or $Y_{-\infty,n}\unlhd X$,
where $Y_{n,\infty}:=Y\cap X_{n,\infty}$ and $Y_{-\infty,n}:=Y\cap X_{-\infty,n}$.
\end{prop}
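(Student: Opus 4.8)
The plan is to combine \cref{[XY]=Y} with \cref{normal-aux}. First I would invoke \cref{[XY]=Y}: since $[X,Y]=Y$ and $Y$ has infinite index (so in particular $Y \neq 1$ unless trivially), there exists $n \in \ZZ$ such that at least one of
\[
Y_{n-1,\infty}\nleq\gen{Y_{n,\infty}}^X
\quad\text{or}\quad
Y_{-\infty,n+1}\nleq\gen{Y_{-\infty,n}}^X
\]
holds. By relabelling the index, these are exactly the hypotheses ``$Y_{m,\infty}\nleq\gen{Y_{m+1,\infty}}^X$ for some $m$'' and ``$Y_{-\infty,m}\nleq\gen{Y_{-\infty,m-1}}^X$ for some $m$'' appearing in \cref{normal-aux}(i) and (ii) respectively (with $m=n-1$ in the first case and $m=n+1$ in the second). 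Applying the appropriate part of \cref{normal-aux}, which is available because we are assuming $Y\unlhd X$ — wait, here is the first gap to address.

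The subtlety is that \cref{[XY]=Y} only requires $Y$ shift-invariant with $[X,Y]=Y$, whereas \cref{normal-aux} additionally requires $Y\unlhd X$. So before applying \cref{normal-aux} I must verify that in our situation $Y$ is indeed normal in $X$. This should follow from $[X,Y]=Y$ together with $Y\leq X$: indeed $[X,Y]=Y$ gives in particular $[X,Y]\leq Y$, which is precisely the statement $Y\unlhd X$. So normality comes for free from the hypothesis $[X,Y]=Y$, and there is no real obstacle here; I would just spell this out in one line at the start of the proof.

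Thus the skeleton of the proof is: (1) note $Y\unlhd X$ since $[X,Y]=Y\leq Y$; (2) apply \cref{[XY]=Y} to get the index $n$ and the non-containment in one of the two forms; (3) in the first case apply \cref{normal-aux}(i) with the shifted index to conclude $Y_{n-1,\infty}\unlhd X$, and in the second case apply \cref{normal-aux}(ii) to conclude $Y_{-\infty,n+1}\unlhd X$; (4) observe that either conclusion is of the required shape $Y_{k,\infty}\unlhd X$ or $Y_{-\infty,k}\unlhd X$ for a suitable $k\in\ZZ$, and also note $Y\neq 1$ is automatic here (if $Y=1$ then $|X:Y|$ is not infinite unless $X$ is infinite, but in any case $Y=1$ makes $Y_{n,\infty}=1\unlhd X$ trivially, so the statement still holds). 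The main thing to be careful about is the bookkeeping of indices — matching the ``$n-1$'' and ``$n+1$'' of \cref{[XY]=Y} against the ``$n$'' and ``$n+1,n-1$'' of \cref{normal-aux} — but this is purely a relabelling and involves no new idea. I do not anticipate a substantive obstacle; the real content of this section lives in \cref{[XY]=Y} and \cref{normal-aux}, and \cref{normal} is simply their combination.
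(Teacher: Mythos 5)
Your proposal is correct and is exactly the paper's proof: apply \cref{[XY]=Y} to produce the non-containment, then feed it (after the index relabelling you describe) into \cref{normal-aux}, with normality of $Y$ coming for free from $[X,Y]=Y$ and the trivial case $Y=1$ handled separately. No gaps.
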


\begin{proof}
This follows by first applying \cref{[XY]=Y}, then \cref{normal-aux}.
\end{proof}

\section{Infinite abelianization}

\begin{nota}
Let $G$ be a group.
Then let $G^{(0)}:=G$, let $G':=[G,G]$ be the \Defn{derived subgroup}
and for $k\in\NN$ let $G^{(k+1)}:=[G^{(k)},G^{(k)}]$.
\end{nota}

\begin{lemma}\label{non perfect}
Let $1\neq Y\unlhd X$ be shift-invariant. Then $[Y,Y]< Y$.
\end{lemma}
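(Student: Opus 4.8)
The plan is to argue by contradiction: assume $[Y,Y]=Y$ and show that this forces $Y=1$. Since $Y\unlhd X$ we have $Y=[Y,Y]\leq[X,Y]\leq Y$, so $[X,Y]=Y$; in particular $Y\leq X'$. The argument then splits according to whether $Y$ has finite or infinite index in $X$.

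The infinite-index case is immediate: if $|X:Y|=\infty$, then $\cref{MaxProposition5.7(3)}$ applies (recall $Y\neq 1$), so $Y$ is elementary abelian. Hence $[Y,Y]=1$, and together with $[Y,Y]=Y$ this gives $Y=1$, contradicting $Y\neq 1$. So no non-trivial shift-invariant $Y\unlhd X$ of infinite index can be perfect; for such $Y$ we in fact get $[Y,Y]=1<Y$ directly, without any contradiction argument.

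So assume $|X:Y|<\infty$. The plan is to run the mechanism of $\cref{[XY]=Y}$, $\cref{lem:110}$ and the endgame of $\cref{[XY]=Y}$ to completion. First I would check that for finite-index $Y$ the one-sided containments $Y_{n-1,\infty}\leq\gen{Y_{n,\infty}}^X$ and $Y_{-\infty,n+1}\leq\gen{Y_{-\infty,n}}^X$ hold for every $n\in\ZZ$. Once this is known, $\cref{lem:110}$ yields $M\in\NN$ with $Y\leq\gen{Y_{0,M}}^X$; then, choosing a finite generating set $z_1,\dots,z_k$ of the finite group $Y_{0,M}$ and writing each $z_i$ as a product of commutators $[g_{ij},y_{ij}]$ with $g_{ij}\in X$ and $y_{ij}\in Y$ (possible since $Y=[X,Y]$), one bounds the supports of the $y_{ij}$ to obtain $n'\leq m'$ with $Y_{n',m'}\leq\gen{[X,Y_{n',m'}]}^X$; since $X$ is locally nilpotent and $Y_{n',m'}$ is finitely generated, $\cref{A-notin-[AU]_U}$ forces $Y_{n',m'}=1$, whence $Y\leq\gen{Y_{0,M}}^X=1$ — the desired contradiction.

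The main obstacle is thus establishing, for $|X:Y|<\infty$ and $[X,Y]=Y$, that $Y_{n-1,\infty}\leq\gen{Y_{n,\infty}}^X$ for all $n$ (and the symmetric statement, which follows via the reflection symmetry $n\mapsto-n$, $t\mapsto t^{-1}$ of $\ZZ$-systems). By $\cref{[XY]=Y}$ the failure of all these containments is exactly the obstruction, so this is really the heart of the matter. To prove it I would fix $n$, set $N:=\gen{Y_{n,\infty}}^X\unlhd X$ (so $N\leq Y$), pass to $\bar X:=X/N$ and $\bar Y:=Y/N$ (still satisfying $[\bar X,\bar Y]=\bar Y$), and exploit that $Y_{n,\infty}$ has finite index in $X_{n,\infty}$ so that the image of $X_{n,\infty}$ in $\bar X$ is finite; combined with local nilpotence of $\bar X$ and a further application of $\cref{A-notin-[AU]_U}$ to a suitable finite chunk, this should force $\bar Y=1$, i.e.\ $Y=N\supseteq Y_{n-1,\infty}$. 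The delicate point here — and the reason one cannot simply quote nilpotence of a finite quotient — is that $Y_{n,\infty}$, hence a priori also $N$, has infinite index in $X$; one must use $[X,Y]=Y$ to control how far to the left an element of $Y$ must reach when expressed through commutators with entries in $Y$, so that it can always be pushed into $N$.
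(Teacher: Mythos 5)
Your infinite-index case is correct and immediate: by \cref{MaxProposition5.7(3)} such a $Y$ is elementary abelian, so $[Y,Y]=1<Y$. The gap lies entirely in the finite-index case, which cannot be dismissed (when the lemma is later applied to the derived series in \cref{derived index arbitrary}, the groups $X^{(k)}$ are not yet known to have infinite index). Your plan there is to prove that $Y_{n-1,\infty}\leq\gen{Y_{n,\infty}}^X$ and $Y_{-\infty,n+1}\leq\gen{Y_{-\infty,n}}^X$ hold for \emph{every} $n$ and then conclude $Y=1$ via \cref{lem:110} and the endgame of \cref{[XY]=Y}. But these containments are never established. The sketch you offer --- pass to $X/N$ with $N=\gen{Y_{n,\infty}}^X$, note that the image of $X_{n,\infty}$ there is finite, and apply \cref{A-notin-[AU]_U} to ``a suitable finite chunk'' --- does not identify the finitely generated subgroup $A$ nor explain why it would satisfy $A\leq\gen{[A,X]}^X$: elements of $Y/N$ may have support arbitrarily far to the left, and controlling that spread is exactly the difficulty which \cref{lem:110} only resolves \emph{assuming} the containments, so the proposed route is close to circular. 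You flag this as ``the delicate point'' without resolving it, and it is the heart of the matter. Note also that \cref{[XY]=Y} helps you only in the direction ``all containments hold $\Rightarrow Y=1$''; it gives no purchase on proving the containments themselves.

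The paper proceeds quite differently and needs no case distinction on the index: it uses \cref{[XY]=Y} in the direction in which it is stated. Since $Y\neq1$ and $[X,Y]=Y$, there \emph{exists} $n$ with (say) $Y_{n-1,\infty}\nleq N:=\gen{Y_{n,\infty}}^X$, so $N<Y$ while $N\unlhd X$ and $N\leq Y$. By \cref{decompose shift inv} one can write $Y=Y_{-\infty,m}N$, and one chooses $m$ minimal with this property. Then (ZS5) gives $Y_{-\infty,m}'\leq Y_{-\infty,m-1}$, whence
\[
[Y,Y]\ \leq\ Y_{-\infty,m}'\,N\ \leq\ Y_{-\infty,m-1}\,N\ <\ Y
\]
by minimality of $m$, contradicting $[Y,Y]=Y$. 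Observe that this argument genuinely uses the full hypothesis $[Y,Y]=Y$ (to pass to $Y'N=Y_{-\infty,m}'N$ modulo $N$), whereas your finite-index argument only ever invokes $[X,Y]=Y$; that is a further indication that your route cannot close without new input. I suggest replacing the finite-index half of your proposal by this decomposition argument.
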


\begin{proof}
Suppose that $[Y,Y] =Y$. Then we have also $[X,Y]=Y$.
Since also $Y\neq1$, by \cref{[XY]=Y}
this implies that there exists $n\in \ZZ$ such that
$Y_{n-1,\infty}\nleq\gen{Y_{n,\infty}}^X$ or
$Y_{-\infty,n+1}\nleq\gen{Y_{-\infty,n}}^X$ holds.
Suppose that  $Y_{n-1,\infty}\nleq\gen{Y_{n,\infty}}^X$
(the other case is dealt with by a symmetric argument).

Let $N:=\gen{Y_{n,\infty}}^X$. Then $N\unlhd X$ and by what
we just said $Y_{n-1,\infty}\nleq N$, thus $Y\neq N$. On
the other hand, from $Y_{n,\infty}\leq Y\unlhd X$ it follows
that $N\unlhd Y$.

By \cref{decompose shift inv} there is $m \in \ZZ$ such that
\[ Y \overset{\ref{decompose shift inv}}= Y_{-\infty,m} Y_{n,\infty} \leq Y_{-\infty,m} N \leq Y, \]
hence $Y = Y_{-\infty,m} N$.
Choose $m\in\NN$ minimal with this property.
Then $Y_{-\infty,m}' \leq Y_{-\infty,m-1}$ by (ZS5) and so
\[ [Y,Y] = Y' \leq Y_{-\infty,m}'N \leq Y_{-\infty,m-1}N < Y,\]
a contradiction.
\end{proof}

\begin{cor} \label{derived index arbitrary}
For $k\in\NN$, we have $|X:X^{(k)}|\geq p^k$.
\end{cor}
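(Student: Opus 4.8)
The plan is to argue by induction on $k$, with \cref{non perfect} as the engine. The base case $k=0$ is immediate, since $X^{(0)}=X$ and $|X:X|=1=p^0$. For the inductive step, assume $|X:X^{(k)}|\geq p^k$; we must show $|X:X^{(k+1)}|\geq p^{k+1}$. The first observation is that $X^{(k)}$ is a characteristic subgroup of $X$, hence both normal in $X$ and invariant under the shift automorphism $t$, i.e.\ shift-invariant, so \cref{non perfect} applies to it once we know it is nontrivial.

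I would split into two cases. If $X^{(k)}=1$, then also $X^{(k+1)}=1$, so $|X:X^{(k+1)}|=|X|$; since $X_{0,k}\leq X$ with $|X_{0,k}|=p^{k+1}$ by (ZS2), we conclude $|X|\geq p^{k+1}$. If instead $X^{(k)}\neq1$, then \cref{non perfect} applied to $Y=X^{(k)}$ gives $X^{(k+1)}=[X^{(k)},X^{(k)}]<X^{(k)}$, so the abelian group $X^{(k)}/X^{(k+1)}$ is nontrivial. Picking $g\in X^{(k)}$ with nontrivial image $\bar g$ in $X^{(k)}/X^{(k+1)}$, the cyclic group $\gen{g}\leq X$ is contained in some $X_{n,m}$, a finite $p$-group by (ZS2), so $g$ — and hence $\bar g$ — has order a power of $p$; as $\bar g\neq 1$, this order is at least $p$. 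Therefore $|X^{(k)}:X^{(k+1)}|\geq p$, and multiplicativity of indices yields $|X:X^{(k+1)}|=|X:X^{(k)}|\cdot|X^{(k)}:X^{(k+1)}|\geq p^k\cdot p=p^{k+1}$.

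I do not expect a genuine obstacle here: the substantive content is entirely packed into \cref{non perfect}, which ensures the derived series descends strictly as long as it stays nontrivial. The only point needing a moment's care is upgrading ``the quotient is nontrivial'' to ``the index is at least $p$'', and this is handled by the local finiteness of $X$ (every element lies in a finite $p$-group $X_{n,m}$, hence has $p$-power order), together with the observation that $X$ is infinite so that the degenerate case $X^{(k)}=1$ still gives a large enough index.
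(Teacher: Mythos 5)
Your proof is correct and follows essentially the same route as the paper: induction on $k$, using \cref{non perfect} applied to the characteristic (hence shift-invariant and normal) subgroup $X^{(k)}$ to get strict descent when $X^{(k)}\neq1$, and noting the index is infinite when $X^{(k)}=1$. You merely make explicit the step (left implicit in the paper) that a proper containment forces index at least $p$ because every element of $X$ has $p$-power order.
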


\begin{proof}
The claim follows by induction on $k$, and the following observations:
$X^{(k)}$ is a characteristic subgroup of $X$, hence shift-invariant and normal.
Thus if $X^{(k)}\neq 1$, then $X^{(k+1)}< X^{(k)}$ by \cref{non perfect}.
And if $X^{(k)}= 1$, then $|X:X^{(k)}|=|X:1|=|X|=\infty$.
\end{proof}

\begin{lemma}[{\cite[Lemma 5.9]{MKS66}}] \label{lift-nilp-gens}
Let $G$ be a nilpotent group.
If $z_1,\ldots,z_\ell\in G$ satisfy $G/G'=\gen{z_1G',\ldots,z_\ell G'}$,
then $G=\gen{z_1,\ldots,z_\ell}$.
\end{lemma}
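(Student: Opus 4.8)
Write $H := \gen{z_1,\ldots,z_\ell}$; since the cosets $z_iG'$ generate $G/G'$ we have $HG' = G$, and the task is to promote this to $H = G$. The natural tool is the lower central series: put $\gamma_1(G):=G$ and $\gamma_{k+1}(G):=[\gamma_k(G),G]$ for $k\geq1$. As $G$ is nilpotent there is $c\in\NN$ with $\gamma_{c+1}(G)=1$, so it suffices to prove by induction on $k$ that $G = H\gamma_k(G)$ for all $k\geq2$; taking $k=c+1$ then yields $G = H$.

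The base case $k=2$ is precisely the hypothesis $G=HG'$. For the inductive step, assume $G = H\gamma_k(G)$ with $k\geq2$ and set $K:=\gamma_k(G)$. Pass to $\bar G := G/\gamma_{k+1}(G)$ and write $\bar H,\bar K$ for the images of $H,K$. Then $\bar G = \bar H\bar K$, and $\bar K$ is central in $\bar G$ because $[\bar K,\bar G]$ is the image of $[\gamma_k(G),G]=\gamma_{k+1}(G)$, hence trivial. Next I would record the elementary fact that whenever a group is written as $AB$ with $A,B$ subgroups and $B$ central, one has $\gamma_j(AB)=\gamma_j(A)$ for every $j\geq2$ — this follows by induction on $j$ from the commutator calculus, the central factor $B$ contributing nothing to any commutator. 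Applying this with $A=\bar H$, $B=\bar K$, $j=k$, and using that $\bar K = \gamma_k(G)\gamma_{k+1}(G)/\gamma_{k+1}(G) = \gamma_k(\bar G)$ (the standard behaviour of the lower central series under quotients), we obtain
\[ \bar K \;=\; \gamma_k(\bar G)\;=\;\gamma_k(\bar H\bar K)\;=\;\gamma_k(\bar H)\;\leq\;\bar H, \]
so $\bar G = \bar H\bar K = \bar H$. Pulling this identity back through the quotient map gives $G = H\gamma_{k+1}(G)$, completing the induction.

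I do not expect a genuine obstacle: the whole argument is standard, and indeed the statement is quoted as \cite[Lemma 5.9]{MKS66}. The only point needing a little care is the auxiliary claim $\gamma_j(AB)=\gamma_j(A)$ for a central factor $B$, and in particular the fact that it is being applied to $\bar K=\gamma_k(\bar G)$ — it is this identification of $\bar K$ as the $k$-th lower central term, rather than mere centrality of $\bar K$, which forces $\bar K\leq\bar H$ and makes the induction close.
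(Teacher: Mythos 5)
Your argument is correct: the induction $G=H\gamma_k(G)$ on the lower central series, together with the observations that $\bar K=\gamma_k(\bar G)$ is central in $\bar G=G/\gamma_{k+1}(G)$ and that a central factor contributes nothing to $\gamma_j$ for $j\geq 2$, closes without gaps. The paper offers no proof of this lemma — it is quoted directly from \cite[Lemma~5.9]{MKS66} — and your proof is essentially the standard one found there, so there is nothing to reconcile.
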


\begin{lemma} \label{inf-derived-index}
There is $k\in\NN$ such that $|X:X^{(k)}|=\infty$.
\end{lemma}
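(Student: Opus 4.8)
The plan is to argue by contradiction: suppose $|X:X^{(k)}| < \infty$ for every $k \in \NN$. By \cref{derived index arbitrary} the indices $|X:X^{(k)}|$ are non-decreasing in $k$ (each $X^{(k)}$ contains $X^{(k+1)}$), and they are bounded above by our assumption... but actually the more useful consequence is the reverse: since $|X : X^{(k)}|$ is finite and the series is decreasing, it must stabilize. That is, there is some $k_0$ with $X^{(k_0)} = X^{(k_0+1)}$, i.e. the subgroup $Y := X^{(k_0)}$ satisfies $[Y,Y] = Y$. Now $Y$ is a characteristic subgroup of $X$, hence shift-invariant and normal. If $Y \neq 1$, this directly contradicts \cref{non perfect}, which asserts $[Y,Y] < Y$ for every nontrivial shift-invariant normal subgroup. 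So we would be forced to have $Y = X^{(k_0)} = 1$; but then $|X : X^{(k_0)}| = |X| = \infty$ (recall $X$ is infinite since the groups $X_{n,m}$ have unbounded order by (ZS2)), contradicting our standing assumption that all these indices are finite.

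More carefully, I would first observe that $X$ itself is infinite: by (ZS2), $|X_{-n,n}| = p^{2n+1}$ for all $n$, so $X$ contains finite subgroups of arbitrarily large order. Next, from \cref{derived index arbitrary} we know $|X:X^{(k)}| \geq p^k \to \infty$, so in fact the assumption that all $|X:X^{(k)}|$ are finite is \emph{consistent} — each individual index can be finite while the sequence tends to infinity. So the naive "stabilization" argument does not apply, and one must think again.

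The correct route is instead: assume, for contradiction, that $|X : X^{(k)}| < \infty$ for all $k$. Consider the quotients $X/X^{(k)}$ — these are finite groups (finite $p$-groups, in fact, since $X$ is locally nilpotent and residually a $p$-group via (ZS2)). A finite group $G$ with $|G| = p^N$ has derived length at most roughly $\log_2 N$; more precisely, if $G^{(m)} \neq 1$ then $|G| \geq 2^{2^m}$ or so. Apply this to $G = X/X^{(k)}$ with $m = k$: we have $(X/X^{(k)})^{(k)} = X^{(k)}/X^{(k)} = 1$, so the finite $p$-group $X/X^{(k)}$ has derived length $\leq k$, which is automatic and gives nothing. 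Hmm — I would instead pin down the size: by \cref{derived index arbitrary}, $|X/X^{(k)}| \geq p^k$, and a group of derived length exactly $k$ must have order at least $2^k$ (each step down the derived series contributes a factor $\geq 2$). That is the wrong direction for a contradiction. The genuinely useful input must be that the \emph{lower cutoff} or width structure forbids the derived series from being both infinite-index at each stage and collectively of finite total index — i.e. I would use \cref{non perfect} in the form that $X^{(k)}/X^{(k+1)} \neq 1$ whenever $X^{(k)} \neq 1$ and conclude that if all $X^{(k)}$ were nontrivial and of finite index, then $X$ would surject onto finite $p$-groups of unbounded derived length of \emph{fixed bounded order}, using that $|X : X^{(k)}|$ being finite for all $k$ forces, via some compactness/pigeonhole on widths of generators, these indices to be eventually constant.

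The main obstacle, and where I expect the real work to lie, is precisely this last point: showing that the assumption "$|X:X^{(k)}|$ finite for all $k$" actually forces a contradiction, given that \cref{derived index arbitrary} only gives a \emph{lower} bound growing with $k$. I anticipate that the proof instead shows $X^{(k)} = 1$ for some $k$ is impossible under "all indices finite" — no; rather, I expect the argument runs: if $|X : X^{(k)}| < \infty$ for all $k$, then $X^{(k)}$ is a nontrivial (as long as $X$ is not solvable) shift-invariant normal subgroup of finite index, so by \cref{MaxProposition5.7(1)} both $(X^{(k)})_{even}$ and $(X^{(k)})_{odd}$ are nonempty; one then tracks a generator of minimal width in $X^{(k)}$ and shows, via (ZS5) and \cref{decompose shift inv}, that passing from $X^{(k)}$ to $X^{(k+1)}$ strictly decreases this minimal width — which cannot decrease forever, forcing $X^{(k)} = X^{(k+1)}$ for large $k$, hence (by \cref{non perfect}) $X^{(k)} = 1$, hence $X$ solvable; but then $|X : X^{(k)}| = |X| = \infty$, the final contradiction. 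Making the "minimal width strictly decreases" step precise — linking $Y' \leq Y_{-\infty,m-1}N$ type estimates from the proof of \cref{non perfect} to an honest descent — is the crux, and I would lean heavily on \cref{decompose shift inv} and (ZS5) there.
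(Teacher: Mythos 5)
Your proposal correctly diagnoses that the naive stabilization argument fails (the indices $|X:X^{(k)}|$ can all be finite while tending to infinity, so the derived series need not stabilize), but it never closes the gap it identifies. The final route you sketch --- that passing from $X^{(k)}$ to $X^{(k+1)}$ ``strictly decreases'' the minimal width of a generator, forcing stabilization --- is both unproved and pointed in the wrong direction: since $X^{(k+1)}\leq X^{(k)}$, the minimal width of a nontrivial element can only \emph{increase} with $k$, and unbounded width merely reproduces the unbounded lower bound on $|X:X^{(k)}|$ already supplied by \cref{derived index arbitrary}; it does not contradict the hypothesis that each individual index is finite. Nor would a halting descent force $X^{(k)}=X^{(k+1)}$. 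So the crux you flag (``making the descent precise'') is not a technical detail but the entire content of the lemma, and the mechanism you propose for it does not work.

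The paper's actual argument uses a different and much softer idea, which is the one missing from your proposal: under the contradiction hypothesis, $|X:X'|<\infty$, so the abelianization is generated by finitely many cosets $z_1X',\dots,z_\ell X'$. Each quotient $G_k:=X/X^{(k)}$ is a finite $p$-group, hence nilpotent, and $G_k/G_k'\cong X/X'$ is generated by the images of the $z_i$; by the standard fact that elements generating $G/G'$ generate a nilpotent group $G$ (\cref{lift-nilp-gens}), the fixed elements $z_1,\dots,z_\ell$ therefore generate $G_k$ for \emph{every} $k$. Hence $X=ZX^{(k)}$ where $Z:=\gen{z_1,\dots,z_\ell}$ is finite because $X$ is locally finite, so $|X:X^{(k)}|\leq|Z|$ is bounded independently of $k$ --- contradicting $|X:X^{(k)}|\geq p^k$ from \cref{derived index arbitrary}. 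In short, the lower bound from \cref{derived index arbitrary} is played off against a uniform \emph{upper} bound obtained by lifting a fixed generating set of the abelianization through the nilpotent finite quotients; no width or descent argument is involved.
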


\begin{proof}
Suppose $|X:X^{(k)}|<\infty$ for all $k\in\NN$.
Choose $z_1,\ldots,z_\ell\in X$ such that $X/X'=\gen{z_1X',\ldots,z_\ell X'}$.
For $k\in\NN$, the groups $G_k:=X/X^{(k)}$ are finite $p$-groups and hence nilpotent.
Next observe that
\[
G_k/G_k'  \cong X/X' = \gen{z_1X',\ldots,z_\ell X'}
\]
implies that
\[
G_k/G_k' = \gen{\hat{z}_1G_k',\ldots,\hat{z}_\ell G_k'},
\]
where $\hat{z}_1:=z_1X^{(k)},\ldots,\hat{z}_\ell:=z_\ell X^{(k)}$.
Therefore, by \cref{lift-nilp-gens} we conclude
\[G_k=\gen{z_1X^{(k)},\ldots,z_\ell X^{(k)}}.\]
Now let $Z:=\gen{z_1,\ldots,z_\ell}\leq X$. Since $X$ is locally finite, $|Z|<\infty$.
It follows that $X=ZX^{(k)}$ for all $k\in\NN$, hence $|X:X^{(k)}|\leq |Z|$.
But this is a contradiction, as $|X:X^{(k)}|$ becomes arbitrarily large by \cref{derived index arbitrary}.
\end{proof}

\begin{lemma}[{\cite[5.2.6]{Rob96}}] \label{G finite}
A nilpotent group $G$ with $|G:G'| < \infty$ is finite.
\end{lemma}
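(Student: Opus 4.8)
The plan is to climb the lower central series of $G$. Write $\gamma_1 := G$ and $\gamma_{i+1} := [\gamma_i, G]$, so that $\gamma_2 = G'$; since $G$ is nilpotent, say of class $c$, we have $\gamma_{c+1} = 1$. I claim that every factor $\gamma_i/\gamma_{i+1}$ is finite. Granting this, $G$ has a subnormal series of length $c$ with finite factors and is therefore finite. The case $i=1$ is precisely the hypothesis $|G:G'| < \infty$, so the content lies in the inductive step from $i$ to $i+1$.

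The engine of the induction is the commutator map. For each $i \ge 1$ I would verify that
\[
\bar\beta_i\colon G/G' \times \gamma_i/\gamma_{i+1} \longrightarrow \gamma_{i+1}/\gamma_{i+2},
\qquad (xG',\, y\gamma_{i+1}) \longmapsto [x,y]\gamma_{i+2},
\]
is well defined and bi-additive. Both properties reduce to the standard commutator identities $[ab,c] = [a,c]^b[b,c]$ and $[a,bc] = [a,c][a,b]^c$ together with the inclusions $[\gamma_j,\gamma_k] \le \gamma_{j+k}$: every correction term produced when one changes a representative or expands a product lies in $\gamma_{i+2}$ (for instance $[g',y] \in [\gamma_2,\gamma_i] \le \gamma_{i+2}$ when $g' \in G'$, $[[x,y],g] \in [\gamma_{i+1},\gamma_1] \le \gamma_{i+2}$ for $g \in G$, and $[x,c] \in [\gamma_1,\gamma_{i+1}] \le \gamma_{i+2}$ for $c \in \gamma_{i+1}$). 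Furthermore $\bar\beta_i$ is surjective, since $\gamma_{i+1} = [G,\gamma_i]$ is generated by the commutators $[x,y]$ with $x \in G$ and $y \in \gamma_i$.

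Now assume $\gamma_i/\gamma_{i+1}$ is finite, and fix finite sets of representatives $x_1,\dots,x_r$ for $G/G'$ and $y_1,\dots,y_s$ for $\gamma_i/\gamma_{i+1}$. By surjectivity and bi-additivity of $\bar\beta_i$, the group $\gamma_{i+1}/\gamma_{i+2}$ is generated by the finitely many elements $[x_a,y_b]\gamma_{i+2}$. Bi-additivity in the first variable moreover gives $[x_a,y_b]^{m}\gamma_{i+2} = [x_a^{m},y_b]\gamma_{i+2}$ for every $m$, so taking $m = \mathrm{ord}(x_aG')$ and using $x_a^{m} \in G' = \gamma_2$ we get $[x_a^{m},y_b] \in [\gamma_2,\gamma_i] \le \gamma_{i+2}$; hence the order of each generator $[x_a,y_b]\gamma_{i+2}$ divides $\exp(G/G')$. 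Thus $\gamma_{i+1}/\gamma_{i+2}$ is a finitely generated abelian group of finite exponent, hence finite. This completes the induction and the proof.

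The only step that demands care is the verification that $\bar\beta_i$ is well defined and bi-additive modulo $\gamma_{i+2}$ — it is pure commutator calculus, but must be carried out cleanly. A variant would be to first deduce that $G$ is finitely generated (using a statement close to \cref{lift-nilp-gens}) and then apply the exponent argument to the factors $\gamma_i/\gamma_{i+1}$, which are then finitely generated; but packaging both facts into the single bilinear-map induction above seems most economical.
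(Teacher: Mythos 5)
Your proof is correct: the induction along the lower central series via the well-defined bi-additive commutator map $G/G'\times\gamma_i/\gamma_{i+1}\to\gamma_{i+1}/\gamma_{i+2}$, combined with the exponent bound $[x_a,y_b]^m\equiv[x_a^m,y_b]$, is exactly the standard argument for this fact. The paper itself offers no proof here — it simply cites \cite[5.2.6]{Rob96} — and your write-up reproduces that textbook proof, so there is nothing to contrast; the only cosmetic point is that ``$\bar\beta_i$ is surjective'' should read ``the image of $\bar\beta_i$ generates $\gamma_{i+1}/\gamma_{i+2}$,'' which is what you actually use.
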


\begin{lemma} \label{nilpotent-extension}
Let $G$ be a $p$-group, $N\unlhd G$ nilpotent of finite exponent and $|G:N| < \infty$.
Then $G$ is nilpotent of finite exponent.
\end{lemma}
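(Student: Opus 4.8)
We argue by induction on the nilpotency class $c$ of $N$, having first disposed of the exponent claim. Since $G$ is a $p$-group and $\lvert G:N\rvert<\infty$, the finite group $G/N$ is a $p$-group, so $\lvert G:N\rvert=p^k$ for some $k\in\NN$; if $N$ has exponent $p^e$, then for every $g\in G$ we have $g^{p^k}\in N$ and hence $g^{p^{k+e}}=1$. Thus $G$ has exponent dividing $p^{k+e}$, and it remains only to prove that $G$ is nilpotent.

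For the induction: if $c=0$ then $N=1$, so $G$ is finite of order $p^k$ and hence nilpotent. Suppose $c\geq 1$ and set $Z:=Z(N)$. This is characteristic in $N$, hence normal in $G$. In $\bar G:=G/Z$ the subgroup $\bar N:=N/Z$ is normal, nilpotent of class $c-1$, of exponent dividing $p^e$, and satisfies $\lvert\bar G:\bar N\rvert=p^k<\infty$, while $\bar G$ is again a $p$-group. By the induction hypothesis $\bar G=G/Z$ is nilpotent, of class $m$ say, and of finite exponent. Hence $A:=\gamma_{m+1}(G)\leq Z=Z(N)$. In particular $A$ is abelian, has exponent dividing $p^e$, and is centralized by $N$; therefore $C:=C_G(A)$ is a normal subgroup of $G$ containing $N$, so that $Q:=G/C$ is a finite $p$-group (of order dividing $p^k$).

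The conjugation action of $G$ on $A$ factors through $Q$, turning $A$ into a module over the ring $R:=(\ZZ/p^e\ZZ)[Q]$. I claim the augmentation ideal $J$ of $R$ is nilpotent. Indeed, the augmentation ideal of $\FF_p[Q]$ is nilpotent (one sees by induction on $\lvert Q\rvert$, peeling off a central subgroup of order $p$, that $\FF_p[Q]$ is a local ring); reducing mod $p$ this gives $J^s\subseteq pR$ for some $s$, and since $p^eR=0$ a short computation (multiplying the inclusion by itself and absorbing powers of $p$) yields $J^{es}=0$. Writing $A$ additively with $Q$ acting on the right, we have $a(q-1)=[a,q]$ for $a\in A$, $q\in Q$, whence $AJ^{j}=[A,\underbrace{G,\dots,G}_{j}]$ for every $j$. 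Consequently
\[
\gamma_{m+1+es}(G)=[\,A,\underbrace{G,\dots,G}_{es}\,]=AJ^{es}=1,
\]
so $G$ is nilpotent, and combined with the first paragraph it is nilpotent of finite exponent.

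The only step that is not pure bookkeeping with the lower central series is the nilpotency of the augmentation ideal of $(\ZZ/p^e\ZZ)[Q]$ for the finite $p$-group $Q=G/C$; this, together with keeping track of the fact that the $G$-action on $A=\gamma_{m+1}(G)$ factors through the finite quotient $Q$ (which is where the finiteness of $\lvert G:N\rvert$ is really used), is the heart of the argument. Everything else reduces to the observation that $Z(N)$ is characteristic in $N$ and hence normal in $G$.
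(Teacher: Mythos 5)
Your proof is correct, and it takes a genuinely different route from the paper's. The paper reduces to the case $\lvert G:N\rvert=p$, refines the upper central series of $N$ (using the finite exponent) into a $G$-invariant series with elementary abelian factors $M_i$, and then observes that the order-$p$ linear map induced on each $\FF_p$-vector space $M_i$ by an element $x\in G\setminus N$ has minimal polynomial dividing $(t-1)^p$, so the series refines to a central series of $G$. You instead induct on the nilpotency class of $N$, which reduces everything to the action of the finite $p$-group $Q=G/C_G(A)$ on the abelian group $A=\gamma_{m+1}(G)\leq Z(N)$ of exponent dividing $p^e$, and you conclude via the nilpotency of the augmentation ideal of $(\ZZ/p^e\ZZ)[Q]$ (lifted from the standard fact for $\FF_p[Q]$). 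Both arguments rest on the same phenomenon --- unipotence of $p$-group actions in characteristic $p$ --- but they trade off differently: the paper handles arbitrary index by an outer induction and the finite exponent by refining to exponent-$p$ factors, working with one linear map at a time; your version handles arbitrary index in one stroke through the group ring of $Q$, at the cost of needing the slightly stronger ring-theoretic input $J^{es}=0$ over $\ZZ/p^e\ZZ$. Your argument also yields an explicit bound on the nilpotency class ($m+es$) and on the exponent ($p^{k+e}$), which the paper does not record. All the individual steps check out: $Z(N)$ characteristic in $N$ hence normal in $G$; $C_G(A)\supseteq N$ normal since $A$ is characteristic; $AJ^{j}=[A,G,\dots,G]$ because the conjugation action on $A$ factors through $Q$ and $J$ is spanned by the elements $q-1$; and $(J^{s})^{e}\subseteq(pR)^{e}=p^{e}R=0$.
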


\begin{proof}
We will assume $|G:N|=p$, the general case follows by induction on $|G:N|$.
Let
\[ Z_0:=1 \leq Z_1 := Z(N) \leq Z_2 \leq \dots \leq Z_n=N \]
be the upper central series of $N$. Then for all $i$, the 
$Z_i$ are characteristic in $N$ and hence normal in $G$.
Since $N$ has finite exponent, we can refine this series
to a series 
\[ W_0:=1 \leq W_1 \leq \dots \leq W_m = N \]
such that $W_i$ is normal in $G$ and $M_i:=W_i/W_{i-1}$ has exponent $p$ for all $i>0$.
In fact, since we refined a central series, the $M_i$ are elementary abelian $p$-groups,
in other words, vector spaces over a finite field of order $p$.

Let $x \in G \setminus N$. Since $N$ acts trivially on $M_i$, and since $|G:N|=p$,
it follows for all $i>0$ that $x$ induces an automorphism 
$x_i$ of order at most $p$ on the vector space $M_i$.
Since $x_i^p=1$, the linear map $x_i$ has a minimal polynomial dividing $t^p -1 =(t-1)^p$.

But then $[v,\underbrace{x_i, \ldots, x_i}_{p}]=1$ for all $v \in M_i$.
Hence we can refine the series in such a way that $G$ acts trivially on each factor.
Therefore $G$ is nilpotent, and since $N$ and $G/N$ have
finite exponent, the exponent of $G$ is also finite.
\end{proof}

\begin{remark}
Note that the condition that the exponent of $N$ is finite is essential. For example, let
$G$ be the injective limit of dihedral groups $(D_{2^n})_{n\geq 1}$, that is 
\[ G = \gen{ s,r_1,r_2,r_3,\dots \mid s^2=1=r_1^2,\ r_{n+1}^2=r_n,\
 r_ns=sr_n^{-1}\ \text{ for } n\geq 1}.
\]
Let $N$ the normal subgroup generated by the rotations $r_n$.
Then $N$ is an abelian $2$-group and $G/N$ has order $2$, but $[G,N]=N$.
\end{remark}

\begin{thm}\label{commutator group}
Let $(X,(x_n)_{n \in \ZZ})$ be a $\ZZ$-system of prime order $p$.
Then $X$ has infinite abelianization $X/X'$.
\end{thm}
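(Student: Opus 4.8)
The plan is to argue by contradiction: assuming $|X:X'|<\infty$, I will show that $X$ is nilpotent, and then \cref{G finite} forces $X$ to be finite — contradicting the fact, immediate from (ZS2), that $|X_{0,m}|=p^{m+1}$ for every $m$, so that $|X|=\infty$. Thus the whole content is the implication ``$|X:X'|<\infty\ \Rightarrow\ X$ nilpotent'', and this is where the work lies.

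To prove nilpotency, write $\gamma_1:=X$ and $\gamma_{i+1}:=[\gamma_i,X]$ for the lower central series. Each $\gamma_i$ is characteristic, hence shift-invariant and normal, and $X/\gamma_i$ is nilpotent with $(X/\gamma_i)'=X'/\gamma_i$ for $i\ge 2$, so $(X/\gamma_i)/(X/\gamma_i)'\cong X/X'$. Fix once and for all elements $z_1,\dots,z_\ell\in X$ whose images generate the finite group $X/X'$ and set $Z:=\gen{z_1,\dots,z_\ell}$; since $X$ is locally finite, $Z$ is finite. Applying \cref{lift-nilp-gens} to the nilpotent group $X/\gamma_i$ (whose abelianization is generated by the images of the $z_j$) gives $X/\gamma_i=\gen{z_1\gamma_i,\dots,z_\ell\gamma_i}$, i.e.\ $X=Z\gamma_i$, whence $|X:\gamma_i|\le|Z|$ for all $i$. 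So the descending chain $\gamma_1\ge\gamma_2\ge\cdots$ has bounded index and is eventually constant: there is $i_0$ with $\Gamma:=\gamma_{i_0}=\gamma_{i_0+1}=[\Gamma,X]$, and $\Gamma\unlhd X$ is shift-invariant of finite index.

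If $\Gamma=1$ then $X$ is nilpotent and we are done. Suppose $\Gamma\ne1$. As $\Gamma$ is shift-invariant of finite index, $\Gamma_{even}$ and $\Gamma_{odd}$ are non-empty by \cref{MaxProposition5.7(1)}, so by \cref{MaxProposition7.1(2)} the group $\Gamma$ is generated by the shifts of two elements $a\in\Gamma_{even}$, $b\in\Gamma_{odd}$ of minimal width, and by minimality together with \cref{cancel power} these have order $p$. Hence $V:=\Gamma/[\Gamma,\Gamma]$ is elementary abelian, i.e.\ an $\FF_p$-vector space, on which $X$ acts by conjugation through $Q:=X/C_X(V)$; since $C_X(V)\supseteq\Gamma$ has finite index, $Q$ is a finite $p$-group, and $V$ is an $\FF_p[Q]$-module. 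From $[\Gamma,X]=\Gamma$ we get $V=[V,X]=I\cdot V$, where $I$ is the augmentation ideal of $\FF_p[Q]$. But $\FF_p[Q]$ is a local Artinian ring, so $I$ is nilpotent; since $V$ is the directed union of its finite $\FF_p[Q]$-submodules, iterating $V=IV=I^2V=\cdots$ forces $V=0$, i.e.\ $[\Gamma,\Gamma]=\Gamma$ — contradicting \cref{non perfect}. Hence $\Gamma=1$, $X$ is nilpotent, and the proof is complete.

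The main obstacle is the passage ``finite $X/X'$ $\Rightarrow$ $X$ nilpotent''. The key point is that \cref{lift-nilp-gens} bounds the index of \emph{every} lower central term by the single constant $|Z|$, which forces the lower central series to stabilize; one then has to rule out a nontrivial stable term $\Gamma$, and here the earlier ``one-sided normality'' machinery (\cref{normal}, \cref{normal-aux}) is unavailable because $\Gamma$ has finite index, so one instead exploits shift-invariance of $\Gamma$ (to turn $\Gamma/[\Gamma,\Gamma]$ into an $\FF_p[Q]$-module via \cref{MaxProposition7.1(2)}), nilpotence of the augmentation ideal of $\FF_p[Q]$, and \cref{non perfect}.
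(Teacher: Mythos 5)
Your overall strategy is sound, and its second half genuinely differs from the paper's: where the paper works with the derived series, locates the first $k$ with $X/X^{(k+1)}$ infinite, shows that quotient is nilpotent via \cref{nilpotent-extension}, and then invokes \cref{G finite}, you run the same index-bounding argument via \cref{lift-nilp-gens} (essentially the paper's \cref{inf-derived-index}) on the lower central series, extract a stable term $\Gamma=[\Gamma,X]$ of finite index, and kill it by an augmentation-ideal argument combined with \cref{non perfect}. The stabilization of the lower central series, the finiteness of $Q=X/C_X(V)$, and the reduction ``$X$ nilpotent and $|X:X'|<\infty$ $\Rightarrow$ $X$ finite, contradiction'' are all correct.

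There is, however, one concrete gap: the claim that the minimal-width generators $a\in\Gamma_{even}$ and $b\in\Gamma_{odd}$ ``have order $p$ by minimality together with \cref{cancel power}'', from which you conclude that $V=\Gamma/[\Gamma,\Gamma]$ is an $\FF_p$-vector space. \cref{cancel power} only gives $\w(a^p)<\w(a)$, and minimality of $a$ in $\Gamma_{even}$ only excludes $a^p$ being a nontrivial element of $\Gamma_{even}$; it can perfectly well be a nontrivial element of $\Gamma_{odd}$ of smaller width. The paper explicitly warns of this in the remark following \cref{shift-inv-gens}: it can happen that $a^p=b\neq 1$. So $V$ need not be elementary abelian, and the $\FF_p[Q]$-module language is not available as written. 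The gap is repairable: iterating \cref{cancel power} shows that $a$ and $b$ have orders dividing $p^{\w(a)}$ and $p^{\w(b)}$ respectively, so $V$ is abelian of finite exponent $p^e$ and hence a module over $R:=(\ZZ/p^e\ZZ)[Q]$; the augmentation ideal $I$ of $R$ is still nilpotent (its image in $\FF_p[Q]$ is nilpotent, so $I^N\subseteq pR$ for some $N$, whence $I^{Ne}\subseteq p^eR=0$), and the rest of your argument then goes through unchanged. Note that the paper, at the corresponding point of its own proof, likewise claims only finite exponent for the shift-generated abelian section, not exponent $p$.
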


\begin{proof}
For $k\in\NN$, let $G_k:=X/X^{(k)}$ and $H_k:=X^{(k)}/X^{(k+1)}$.
Since $G_0$ is trivial, \cref{inf-derived-index} implies that there is $k\in\NN$ such that
$|G_k|<\infty$ and $|G_{k+1}|=\infty$.
We have $X^{(k+1)} \leq X^{(k)} \leq X$ and therefore
\[ |G_{k+1}| = |X:X^{(k+1)}| = |X:X^{(k)}|\cdot |X^{(k)} : X^{(k+1)}| = |G_k| \cdot |H_k|.\]
Thus $|H_k| = \infty$.

Since $X^{(k)}$ is shift-invariant, by \cref{shift-inv-gens} it is generated
by the shifts of two elements $a,b\in X^{(k)}$, that is
\[ H_k = \gen{ t^m(a)X^{(k+1)},\ t^m(b)X^{(k+1)} \mid m\in\ZZ }.\]
Since $H_k$ is an abelian $p$-group, there is $n\in\NN$ such that these
generators all have orders dividing $p^n$.
Thus $H_k$ has finite exponent  and as $|G_{k+1}:H_k| = |G_k| < \infty$, the group $G_{k+1}$ is nilpotent by \cref{nilpotent-extension}.
But $G_{k+1}$ is infinite, so $G_{k+1}/G_{k+1}'\cong X/X'$ must also be infinite by
\cref{G finite}.
\end{proof}

\section{Nilpotency class 2} \label{sect hauptbeweis}

\begin{lemma} \label{commutator bilinear}
Let $Y\unlhd X$, $y,y'\in Y$ and $x\in X$. Then
$[yy',x] \in [y,x][y',x] [Y,X,X]$.
\end{lemma}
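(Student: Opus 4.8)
The plan is to run the standard commutator calculus. Write $[a,b]=a^{-1}b^{-1}ab$ as in the paper's convention, so that the identity $[ab,c]=[a,c]^{b}[b,c]$ holds. Applying this with $a=y$, $b=y'$, $c=x$ gives
\[ [yy',x] = [y,x]^{y'}[y',x]. \]
Since $[y,x]^{y'} = [y,x]\,[[y,x],y'] = [y,x]\,[y,x,y']$, we obtain the more explicit expression
\[ [yy',x] = [y,x]\,[y,x,y']\,[y',x]. \]
So the whole point is to locate the ``error term'' $[y,x,y']$ and then to move it to the right-hand end of the product.

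First I would show $[y,x,y']\in[Y,X,X]$. Because $Y\unlhd X$ we have $[y,x]\in[Y,X]\leq Y$, and $y'\in Y\leq X$; hence $[y,x,y']=[[y,x],y']\in[[Y,X],Y]\leq[[Y,X],X]=[Y,X,X]$. Next I would note that $[Y,X,X]$ is normal in $X$: since both $Y$ and $X$ are normal in $X$, the subgroup $[Y,X]$ is normal in $X$, and therefore so is $[[Y,X],X]=[Y,X,X]$. In particular conjugation by $[y',x]\in[Y,X]\leq X$ preserves $[Y,X,X]$.

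Finally I would reorganize the product: from $[yy',x]=[y,x]\,[y,x,y']\,[y',x]$ we get
\[ [yy',x] = [y,x]\,[y',x]\,\bigl([y,x,y']^{[y',x]}\bigr), \]
and the last factor lies in $[Y,X,X]$ by the previous paragraph, which is exactly the assertion $[yy',x]\in[y,x][y',x][Y,X,X]$. There is no substantive obstacle here; the only points requiring a little care are matching the paper's commutator convention so that the expansion $[ab,c]=[a,c]^{b}[b,c]$ is the correct one, and invoking the normality of $[Y,X,X]$ in $X$ to justify pushing the correction term past $[y',x]$.
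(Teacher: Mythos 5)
Your proof is correct and follows essentially the same route as the paper's: expand $[yy',x]=[y,x]^{y'}[y',x]$, identify the correction term $[[y,x],y']\in[Y,X,X]$, and use normality of $[Y,X,X]$ in $X$ to absorb it on the right. Nothing to add.
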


\begin{proof}
We have $[Y,X,X]\unlhd X$, hence
\[
[yy',x] = [y,x]^{y'} [y',x]
= [y,x]\underbrace{[y,x]^{-1} [y,x]^{y'}}_{=[[y,x],y']} [y',x]
\in [y,x][y',x] [Y,X,X].
\qedhere
\]
\end{proof}

\begin{lemma}\label{[YX]=[YXX]}
Let $Y\unlhd X$ be shift-invariant, and suppose $|X:Y| = \infty$.
Then $[Y,X,X] =[Y,X]$.
\end{lemma}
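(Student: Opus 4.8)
The inclusion $[Y,X,X]\le[Y,X]$ is immediate, so the content is the reverse inclusion $[Y,X]\le[Y,X,X]$. If $[Y,X]=1$ there is nothing to prove, so set $V:=[Y,X]\neq 1$. Since $Y\unlhd X$ we have $V\le Y$, hence $V$ is shift-invariant, normal in $X$, and of infinite index; by \cref{MaxProposition5.7(3)} it is elementary abelian of exponent $p$, and taking $u\in V$ of minimal width the shifts $v_n:=t^n(u)$ turn $V$ into a $\ZZ$-system, so $\{v_n\mid n\in\ZZ\}$ is an $\FF_p$-basis of $V$ on which $t$ acts by $v_n\mapsto v_{n+1}$. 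Note also that $V=[Y,X]\le X'$, and (if $X'\neq1$, else $X$ is abelian and $V=1$) $X'$ is elementary abelian by \cref{MaxProposition5.7(3)}, since it is shift-invariant, normal, and of infinite index by \cref{commutator group}; hence $[V,X']=1$, so the conjugation action of $X$ on $V$ factors through $X/X'$, and $[V,X]=\gen{[v,x_n]\mid v\in V,\ n\in\ZZ}$ is a $t$-invariant subgroup of $V$.

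The plan is to prove $[V,X]=V$, which gives $[Y,X,X]=[V,X]=V=[Y,X]$. Suppose, for a contradiction, that $W:=[V,X]=[Y,X,X]$ is a proper subgroup of $V$; it is shift-invariant and normal in $X$. Passing to $\bar X:=X/W$ (bars denote images), the subgroup $\bar V:=V/W$ is non-trivial, $[\bar V,\bar X]=1$ so $\bar V$ is central in $\bar X$, and $\bar V=[\bar Y,\bar X]$. Since $[\bar Y,\bar X,\bar X]=[\bar V,\bar X]=1$, \cref{commutator bilinear} shows that $(\bar y,\bar x)\mapsto[\bar y,\bar x]$ is a biadditive map $\bar Y\times\bar X\to\bar V$, and it is $t$-equivariant because $[t(y),t(x)]=t([y,x])$. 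Using $X=\gen{x_n\mid n\in\ZZ}$ and, via \cref{shift-inv-gens}, a generating set of $Y$ consisting of the $t$-shifts of two elements $a,b$, biadditivity and $t$-equivariance show that $\bar V$ is the $\FF_p[t,t^{-1}]$-submodule of $V/W$ generated by the elements $[\bar a,\bar x_n]$ and $[\bar b,\bar x_n]$, $n\in\ZZ$.

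It remains to turn $\bar V=[\bar Y,\bar X]\neq 1$ into a contradiction; this is the heart of the argument. I would run the width/normal-form descent used in the proofs of \cref{[XY]=Y} and \cref{normal}. If the two non-containments of \cref{[XY]=Y}, applied to $V$ in place of $Y$, both fail, then \cref{lem:110} yields $M\in\NN$ with $V\le\gen{V_{0,M}}^X$; expressing a finite generating set of $V_{0,M}$ as products of commutators $[y,g]$ with $y\in Y$ and $g\in X$ and letting $n\le m$ bound the indices $\n(y),\m(y)$ that occur, one obtains $V_{0,M}\subseteq[Y_{n,m},X]$, and after an appropriate shift a finite subgroup $Y_{n',m'}$ satisfying $Y_{n',m'}\le\gen{[Y_{n',m'},X]}^X$; since $X$ is locally nilpotent, \cref{A-notin-[AU]_U} forces $Y_{n',m'}=1$, whence $Y=1$ and $V=1$, a contradiction. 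If instead one of the non-containments in \cref{[XY]=Y} holds for $V$, then the one-sided argument of \cref{normal-aux}/\cref{normal} gives $n\in\ZZ$ with $V_{n,\infty}\unlhd X$ or $V_{-\infty,n}\unlhd X$; combined with the decomposition $V=V_{-\infty,m}V_{n,\infty}$ of \cref{decompose shift inv}, the identity $V=[Y,X]$, and $[V,X]\subsetneq V$, this produces an element of $V$ of strictly smaller width than the minimal one, a contradiction. The principal obstacle is precisely this last step: bridging the hypothesis $V=[Y,X]$, which controls commutators $[y,x]$ with $y$ in the larger group $Y$, with the input required by \cref{A-notin-[AU]_U}, namely a finite subgroup $A$ of $V$ itself with $A\le\gen{[A,X]}^X$; getting the index bookkeeping right, using the reduction modulo $W$, the biadditivity of the commutator map, and $t$-equivariance in concert, is where the real care is needed.
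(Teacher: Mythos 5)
Your opening reductions are correct: the only content is $[Y,X]\le[Y,X,X]$, the group $Y$ (hence $V:=[Y,X]$) is elementary abelian by \cref{MaxProposition5.7(3)}, $V$ is shift-invariant and normal, and the statement to be proved is exactly $V=[V,X]$. But the proof stops where it has to start. Your contradiction hypothesis $[V,X]\subsetneq V$ cannot be fed into the machinery of \cref{[XY]=Y} and \cref{normal}: those results take $[X,Y]=Y$ as a \emph{hypothesis}, and for $V$ that identity is precisely what is in question. Concretely, in your first branch you get $V\le\gen{V_{0,M}}^X$ and $V_{0,M}\subseteq[Y_{n,m},X]$, hence only $V\le\gen{[Y_{n,m},X]}^X$; but \cref{A-notin-[AU]_U} needs a finite subgroup $A$ with $A\le\gen{[A,X]}^X$, and $Y_{n,m}$ is in general not contained in $V$, while expressing generators of $V_{0,M}$ as commutators $[v,g]$ with $v\in V$ would presuppose $V=[V,X]$. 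The asserted "appropriate shift" producing $Y_{n',m'}\le\gen{[Y_{n',m'},X]}^X$ is unjustified, and the second branch ("produces an element of smaller width") is not an argument. You flag this yourself as the principal obstacle, so what you have is an honest reformulation plus an unproved core.

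The paper closes the gap with a direct induction that needs none of the normal-closure machinery. Take $y\in Y^*$ of minimal width, so $Y=\gen{t^k(y)\mid k\in\ZZ}$ and $Y_{\n(y)+1,\m(y)}=1$, and prove $[y,x_N]\in[Y,X,X]$ by induction on $N$. For $\n(y)\le N\le\m(y)+1$ the commutator lies in $Y_{\n(y)+1,\m(y)}=1$. For larger $N$, write $[y,x_N]=t^{i_1}(y)^{\lambda_1}\cdots t^{i_s}(y)^{\lambda_s}$ in normal form with $0<2i_1<\dots<2i_s\le N-1-\m(y)$. The element $[[y,x_N],x_{N+2i_1}]$ lies in $[Y,X,X]$ \emph{by definition}; expanding it with \cref{commutator bilinear}, the induction hypothesis (applied after shifting) kills every factor with $k\ge 2$, leaving $t^{i_1}([y,x_N]^{\lambda_1})\in[Y,X,X]$, whence $[y,x_N]\in[Y,X,X]$ by shift-invariance and exponent $p$. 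The idea you are missing is exactly this bootstrap: commutators of elements of $[Y,X]$ with $X$ are automatically in $[Y,X,X]$, so one can solve for the leading term of $[y,x_N]$ modulo $[Y,X,X]$ instead of hunting for a contradiction.
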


\begin{proof}
For $Y=1$ the claim is obvious, so we suppose $Y\neq 1$.
Since $[Y,X,X]\leq[Y,X]$, it suffices to show the reverse inclusion.

As $|X:Y| = \infty$, by \cref{MaxProposition5.7(3)} the shifts of any element $y\in Y^*$
of minimal width in $Y^*$ generate the group $Y$, which is abelian. Set $n:=\n(y)$ and $m:=\m(y)$.
Then $Y_{n+1,m}=1$ as $y$ is of minimal width in $Y^*$.
We will now show by induction on $N\geq n$ that $[y,x_N]\in[Y,X,X]$.
Indeed, for $n\leq N\leq m+1$, we have $[y,x_N]\in Y_{n+1,m}=1\leq[Y,X,X]$.

So suppose $N>m+1$, and $[y,x_N]\neq 1$. Since $[y,x_N]\in Y_{n+1,N-1}$, applying (ZS6)
to the $\ZZ$-system $(Y,t^k(y)_{k \in \ZZ})$ yields that there
are uniquely determined values $s\in\NN$, $i_1,\dots,i_s\in\NN$
and $\lambda_1,\dots,\lambda_s\in\ZZ_p^*$ such that
\begin{equation} \label{eqn:yxN=...}
0 < 2i_1  < \ldots < 2i_s \leq N-1-m
\quad\text{ and }\quad
[y,x_N] = t^{i_1}(y)^{\lambda_1} \cdots t^{i_s}(y)^{\lambda_s}.
\end{equation}
If $s>1$, then for $k=2,\ldots, s$, the preceding inequality together with $0<i_1<i_k$ implies 
\[ m+1\leq N-2i_k <
N+2i_1-2i_k = N - 2(i_k-i_1) < N, \]
hence by the induction hypothesis and by the shift-invariance of $[Y,X,X]$ we have
\begin{equation} \label{eqn:comms in YXX}
 [t^{i_k}(y),x_{N+2i_1}] =
t^{i_k}([y,x_{N+2i_1-2i_k}]) \in [Y,X,X].
\end{equation}
Applying \cref{commutator bilinear} repeatedly, we find
\begin{align*}
 [[y,x_N],x_{N+2i_1}]
&\overset{(\ref{eqn:yxN=...})}= [t^{i_1}(y)^{\lambda_1} \cdots t^{i_s}(y)^{\lambda_s},x_{N+2i_1}] \\
&\overset{\ref{commutator bilinear}}{\in} [t^{i_1}(y)^{\lambda_1},x_{N+2i_1}] \cdots [t^{i_s}(y)^{\lambda_s},x_{N+2i_1}] [Y,X,X] \\
&\overset{(\ref{eqn:comms in YXX})}{=} [t^{i_1}(y)^{\lambda_1},x_{N+2i_1}][Y,X,X] \\
&\overset{\ref{commutator bilinear}}{=} [t^{i_1}(y),x_{N+2i_1}]^{\lambda_1}[Y,X,X] \\
&= t^{i_1}([y,x_N])^{\lambda_1}[Y,X,X] \\
&= t^{i_1}([y,x_N]^{\lambda_1})[Y,X,X].
\end{align*}
Therefore $t^{i_1}([y,x_N]^{\lambda_1}) \in [Y,X,X]$.
But $[Y,X,X]$ is shift-invariant, hence we also have $[y,x_N]^{\lambda_1} \in [Y,X,X]$.
And $Y$ has prime exponent $p$, thus also $[y,x_N] \in [Y,X,X]$.
This concludes the proof of the claim that $[y,x_N] \in [Y,X,X]$ for all $N \geq n$.

A similar argument shows that $[y,x_N] \in  [Y,X,X]$ also holds for all $N < n$. 
But $[Y,X] =\gen{t^k([y,x_N]) \mid k,N \in \ZZ}^X$, therefore $[Y,X] = [Y,X,X]$.
\end{proof}

\begin{remark}
Suppose that $G$ and $V$ are groups and that $G$ acts on $V$
from the right by automorphisms. Then we define
\[ [V,G] := \gen{ v^{-1} \cdot v^g \mid g\in G, v\in V}. \]
This is a natural extension of the commutator
group notation, e.g.\ for $V\unlhd G$.
\end{remark}

\begin{lemma} \label{[VG]<V}
Let $G$ and $V$ be $p$-groups, with $G$ acting on $V$ by automorphisms.
If $V$ is finite and non-trivial,
then $[V,G]$ is a proper subgroup of $V$.
\end{lemma}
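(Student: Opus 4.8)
The plan is to reduce at once to the case where $G$ is a finite $p$-group, and then to recognise the given action inside a nilpotent group, so that \cref{A<=[AK] then A=1} does the work.

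First I would replace $G$ by its image $\bar{G}$ under the homomorphism $G\to\Aut(V)$ describing the action. Since $V$ is finite, $\Aut(V)$ is finite, so $\bar{G}$ is finite; being a homomorphic image of a $p$-group, every element of $\bar{G}$ has $p$-power order, and a finite group with that property is a $p$-group by Cauchy's theorem. The action of $G$ on $V$ factors through $\bar{G}$, and straight from the definition of $[V,G]$ one has $[V,G]=[V,\bar{G}]$; so it suffices to prove $[V,\bar G]<V$ for the finite $p$-group $\bar G$.

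Next I would form the semidirect product $P:=V\rtimes\bar{G}$, which is again a finite $p$-group and hence nilpotent. Identifying $V$ and $\bar{G}$ with the obvious subgroups of $P$, conjugation inside $P$ realises the action, so that $v^{-1}v^{\bar g}=[v,\bar g]_{P}$ for all $v\in V$ and $\bar g\in\bar G$. Consequently the commutator subgroup $[V,\bar{G}]$ (in the sense of the preceding Remark) is contained in $[V,P]$, while $[V,P]\le V$ because $V\unlhd P$. If $[V,\bar G]$ were equal to $V$, these two inclusions would force $V=[V,P]$, and in particular $V\le[V,P]$; applying \cref{A<=[AK] then A=1} with $K:=P$ (nilpotent) and $A:=V$ would then give $V=1$, contradicting the hypothesis that $V$ is non-trivial. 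Hence $[V,G]=[V,\bar G]$ is a proper subgroup of $V$.

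I do not expect a genuine obstacle here: the heart of the argument is a one-line application of \cref{A<=[AK] then A=1}. The only points requiring a little care are the reduction step --- checking that the image of $G$ in $\Aut(V)$ is not merely finite but actually a $p$-group --- and the bookkeeping identifying the action-theoretic subgroup $[V,\bar G]$ with a subgroup of honest commutators inside $P$; both are routine.
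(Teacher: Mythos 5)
Your proof is correct and is essentially identical to the paper's: both pass to the image $\widetilde{G}$ of $G$ in $\Aut(V)$, form the semidirect product $V\rtimes\widetilde{G}$ (a finite nilpotent $p$-group), and invoke \cref{A<=[AK] then A=1} to rule out $[V,G]=V$. Your extra remark that the image of a $p$-group in $\Aut(V)$ is again a (finite) $p$-group is a detail the paper leaves implicit, but nothing in the approach differs.
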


\begin{proof}
Let $\alpha: G\to\Aut(V)$ be the action homomorphism associated to the
action of $G$ on $V$. Since $V$ is finite, also $\Aut(V)$ is finite,
and hence $\widetilde{G}:=\alpha(G)$ is finite.
Clearly $[V,G] = [V,\widetilde{G}]$.
Form the semidirect product $K:=V\rtimes \widetilde{G}$.
Then $[V,\widetilde{G}]\leq[V,K]$.
Since $V\unlhd K$ we have $[V,K]\leq V$.
Moreover, $K$ is a finite $p$-group, and thus it is nilpotent.
Hence if $[V,K]=V$, then by \cref{A<=[AK] then A=1} we get $V=1$, a contradiction.
Thus 
\[ [V,G] = [V,\widetilde{G}] \leq [V,K]<V.
\qedhere\]
\end{proof}

\begin{lemma} \label{finite idx submod}
Let $Y\unlhd X$ be shift-invariant with
$[X,Y]\neq 1$.
Suppose there is $y \in Y^*$ 
such that $(Y,(t^k(y))_{k \in \ZZ})$ is a $\ZZ$-system.
Then $Y$ is elementary abelian, and for $m:=\m(y)$,
the group $M:=Y_{-\infty,m}=Y\cap X_{-\infty,m}$ is an
$\FF_p X_{-\infty,m}$-module, and $M_0:=[M,X_{-\infty,m}]$ is a
proper, non-trivial submodule of finite index.
\end{lemma}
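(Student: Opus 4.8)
The plan is to unpack the conclusion into its constituent claims and verify them one after the other, using the structural results already established for $\ZZ$-systems. First I would note that since $(Y,(t^k(y))_{k\in\ZZ})$ is a $\ZZ$-system and $Y$ is shift-invariant in $X$, the shift $t$ of $X$ restricts to an automorphism of $Y$ carrying the generator $t^k(y)$ to $t^{k+1}(y)$; this is exactly the single-step shift of the generating sequence, so \cref{abelian<=>single-shift-inv} forces $Y$ to be elementary abelian of exponent $p$. In particular $M=Y_{-\infty,m}$ is an elementary abelian $p$-group, i.e.\ an $\FF_p$-vector space.

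Next I would check that $M$ is stable under conjugation by $X_{-\infty,m}$, so that it becomes an $\FF_p X_{-\infty,m}$-module. Write $n:=\n(y)$, so that $\n(t^k(y))=n+2k$ and $\m(t^k(y))=m+2k$; the elements of $Y$ lying in $X_{-\infty,m}$ are precisely those built from the shifts $t^k(y)$ with $m+2k\le m$, i.e.\ $k\le 0$. For $j\le m$ and $k\le 0$, the commutator $[x_j,t^k(y)]$ lies in $X_{?,\,m+2k-1}\subseteq X_{-\infty,m}$ by (ZS5), and also lies in $Y$ since $Y\unlhd X$; hence $x_j$ normalizes $M$. As $X_{-\infty,m}=\gen{x_j\mid j\le m}$, the whole group $X_{-\infty,m}$ normalizes $M$, giving the module structure, and $M_0:=[M,X_{-\infty,m}]$ is a submodule.

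The substantive points are that $M_0$ is non-trivial, proper, and of finite index in $M$. Non-triviality is where the hypothesis $[X,Y]\neq 1$ enters: using $Y\unlhd X$ one reduces $[X,Y]$ to $[X_{n',\infty},Y]$ for suitable indices and, via the shift-invariance of $[Y,X]$ together with \cref{[YX]=[YXX]} and the explicit normal-form bookkeeping as in the proof of that lemma, one shows that some commutator $[x_j,t^k(y)]\neq 1$ can be shifted into $X_{-\infty,m}$, so that $M_0\neq 1$. Properness of $M_0$ inside $M$: the quotient $M/M_0$ is the module of coinvariants, and one argues that $M_0$ cannot exhaust $M$ by a local-nilpotence argument, passing to the finite subquotients $Y_{-\infty,m}/Y_{-\infty,m'}$ and invoking \cref{[VG]<V} (a finite non-trivial $p$-group acted on by a $p$-group $G$ has $[V,G]<V$); concretely, if $M_0=M$ held then every finite-width element of $M$ would be a product of commutators $[v,x]$, forcing, after bounding indices and quotienting out a deep enough $Y_{-\infty,m'}$, a contradiction with \cref{A<=[AK] then A=1}. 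Finite index of $M_0$ in $M$: by \cref{MaxProposition5.7(3)} and the fact that the shifts $t^k(y)$, $k\le 0$, generate $M$, modulo $M_0$ the action of $X_{-\infty,m}$ is trivial, so $M/M_0$ is generated by the images of these countably many shifts; using (ZS5) one sees that $[x_j,t^k(y)]$ involves only generators strictly between indices $n+2k$ and $m+2k$, so a fixed window of generators already generates $M/M_0$ — more precisely only finitely many of the $t^k(y)\bmod M_0$ are independent, because consecutive ones differ by an element of $M_0$ once $k$ is sufficiently negative. Hence $M/M_0$ is finite. The main obstacle I anticipate is the properness of $M_0$: unlike the finite-group setting one cannot directly apply the nilpotency lemmas to the infinite module $M$, so the argument must filter $M$ by finite $X_{-\infty,m}$-stable subquotients and transfer the strict inequality $[V,G]<V$ along the filtration, being careful that the pieces remain non-trivial and that the bound obtained is uniform enough to conclude $M_0\neq M$.
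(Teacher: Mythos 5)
The easy parts of your proposal (elementary abelianity via \cref{abelian<=>single-shift-inv}, the module structure of $M$, and the idea that non-triviality of $M_0$ comes from shifting a non-trivial commutator into the window $(-\infty,m]$) are fine, although for non-triviality you do not need \cref{[YX]=[YXX]} at all: since every $y'\in Y$ and $g\in X$ lie in $Y_{-\infty,m+2k}$ and $X_{-\infty,m+2k}$ for $k$ large, one has $[Y,X]=\bigcup_{k}t^k(M_0)$, so $[X,Y]\neq 1$ forces $M_0\neq 1$ immediately.

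The two substantive claims are where your argument has genuine gaps. For \emph{properness}, the filtration you propose does not get off the ground: the subgroups $Y_{-\infty,m'}$ with $m'<m$ are in general \emph{not} $X_{-\infty,m}$-stable (a commutator $[x_j,y']$ with $j\leq m$ and $y'\in Y_{-\infty,m'}$ need only land in $X_{-\infty,m-1}$, not in $X_{-\infty,m'}$), so the quotients $Y_{-\infty,m}/Y_{-\infty,m'}$ are not modules and \cref{[VG]<V} cannot be applied to them; nor does $y\in[M,X_{-\infty,m}]$ give you the inclusion $A\leq\gen{[A,G]}^G$ needed for \cref{A<=[AK] then A=1}, since the commutators expressing $y$ involve arbitrary elements of $M$, not elements of $\gen{y}$. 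The correct argument is a one-liner you missed: $M_0=[Y_{-\infty,m},X_{-\infty,m}]\leq[X_{-\infty,m},X_{-\infty,m}]\leq X_{-\infty,m-1}$ by (ZS5), while $\m(y)=m$ forces $y\notin X_{-\infty,m-1}$, so $y\in M\setminus M_0$. For \emph{finite index}, your key assertion that ``consecutive $t^k(y)\bmod M_0$ differ by an element of $M_0$ once $k$ is sufficiently negative'' is unjustified and essentially assumes the conclusion; there is no reason for $t^{-1}$ to act trivially (or eventually trivially) on $M/M_0$. The argument that works is to observe that $M=\gen{t^{-k}(y)\mid k\in\NN}$ is a free $\FF_p[t^{-1}]$-module of rank one generated by $y$, that $M_0$ is a $t^{-1}$-invariant (hence $\FF_p[t^{-1}]$-) submodule because $t^{-1}(M_0)=[Y_{-\infty,m-2},X_{-\infty,m-2}]\leq M_0$, and that any non-zero submodule of a free rank-one module over $\FF_p[t^{-1}]$ has finite index. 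As written, neither of these two steps of your proof would survive being made precise.
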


\begin{proof}
The group $Y$ is elementary abelian by \cref{abelian<=>single-shift-inv},
hence so is $M$.
As $Y\unlhd X$, the group $M$ is an $\FF_p X_{-\infty,m}$-module.
We compute
\[
[Y,X]
= \bigcup_{k\in\NN} [Y_{-\infty,m+2k},X_{-\infty,m+2k}]
= \bigcup_{k\in\NN} t^k(M_0).
\]
The hypothesis states $[X,Y]\neq 1$, so we must have $M_0\neq 1$.
Moreover $y\in M$, but
\[ M_0
= [Y_{-\infty,m},X_{-\infty,m}]
\leq [X_{-\infty,m},X_{-\infty,m}]
\leq X_{-\infty,m-1},
\]
and $y\notin X_{-\infty,m-1}$, hence $y\notin M_0$.
We conclude that $M_0\neq M$, i.e.\ $M_0$ is a proper, non-trivial submodule.

Since $M=\gen{t^{-k}(y)\mid k\in\NN}$, we may also regard $M$ as an $\FF_p[t^{-1}]$-module,
which  is generated by $y\in M$. Hence it is a free $\FF_p[t^{-1}]$-module of rank $1$.
Now $M_0$ is a proper non-trivial $\FF_p[t^{-1}]$-submodule of $M$,
thus $M_0$ must have finite index in $M$.
\end{proof}

We are now ready to prove our main theorem.

\begin{proof}[Proof of \cref{hauptsatz}]
Set $Y:=[X,X,X]$. Our goal is to prove $Y=1$.
Clearly $Y\unlhd X$ and also $Y\unlhd X'$ hold.
By \cref{commutator group} we have $|X:X'|=\infty$.
We thus may apply \cref{[YX]=[YXX]} for $X'$, which yields
\[
Y \overset{\mathrm{def.}}= [X',X] \overset{\ref{[YX]=[YXX]}}= [X',X,X]
\overset{\mathrm{def.}}= [Y,X].
\]
In addition, $Y\leq X'$ and $|X:X'|=\infty$ imply $|X:Y|=\infty$.
Therefore \cref{normal} is applicable, and proves that there is
$n \in \ZZ$ such that $Y_{n,\infty}\unlhd X$ or $Y_{-\infty,n}\unlhd X$.
We may assume (up to a relabeling of the generators of $X$) without loss of generality
that the first case holds.

We proceed by assuming that $Y\neq1$ and derive a contradiction.
By \cref{MaxProposition5.7(3)} there is $y \in Y^*$ with $\n(y) =n$
and $Y_{n,\infty}=\gen{t^k(y)\mid k\in\NN}$.
Let \[ N := Y_{n+2,\infty}, \quad m:=\m(y), \quad Y_0:=[Y/N, X_{-\infty,m}], \]
where we regard $Y/N$ as an $\FF_p X$-module, which is feasible
since $Y\unlhd X$ and also 
\[N=t(Y_{n,\infty})\unlhd t(X) = X. \]
We claim that $Y_0$ is an $\FF_p X$-submodule of $Y/N$.
Indeed, we have
\[[X_{-\infty,m},X] \leq X' \leq C_X(Y),
\quad\text{ implying }\quad
X_{-\infty,m}^g \subseteq X_{-\infty,m} C_X(Y)\]
for all $g\in X$.
Moreover, from $Y=Y^g$ and $[a,bc] = [a,c][a,b]^c$ it follows that
\[
[Y, X_{-\infty,m}]^g
= [Y^g, X_{-\infty,m}^g]
\leq [Y, X_{-\infty,m}C_X(Y)] = [Y, X_{-\infty,m}].\]
Hence $Y_0$ is indeed an $\FF_p X$-submodule of $Y/N$.

By \cref{finite idx submod}, we have $1 < |M : M_0| < \infty$ for
\[M:=Y_{-\infty,m}, \quad M_0 := [M, X_{-\infty,m}]. \]
Since $Y/N$ is an $\FF_p X$-module, it is also an $X_{-\infty,m}$-module.
In fact $Y/N$ and $M$ are isomorphic as $X_{-\infty,m}$-modules:
Indeed, $Y$ is the inner direct product of $N$ and $M$, thus we get the isomorphism
\[ M \to Y/N,\  g \mapsto g N.\]
This isomorphism maps $M_0$ to $Y_0$, and so
\cref{finite idx submod} implies $1 < |Y/N : Y_0| < \infty$.

Therefore
$A:=(Y/N)/Y_0$ is a non-trivial, finite $p$-group on which $X$
acts by automorphisms, and so \cref{[VG]<V} implies $[A,X]<A$.
Yet earlier on we proved $[Y,X]=Y$, which implies
\[ [A,X] = [(Y/N)/Y_0, X] = (Y/N)/Y_0 = A. \]
But this is a contradiction. Hence our initial assumption that $Y\neq 1$
was wrong, and so $Y$ is trivial.
Since by definition $Y=[X,X,X]$, this completes the claim.
\end{proof}

\paragraph{Acknowledgments.}

At an early stage of this project, we had proven a weaker result, namely that
\emph{nilpotent} $\ZZ$-systems of order 2 are nilpotent of class 2.
We would like to point out that this preliminary result has been proved independently by
Bettina Wilkens.
The idea of making use of $\FF_p[t]$-modules in this context, which we adopted
for the proof of \cref{hauptsatz}, is due to her.

We are grateful to Barbara Baumeister, Maximilian Parr and Richard Weiss
for careful proofreading and helpful comments.
We also thank Pierre-Emmanuel Caprace and the anonymous referee for useful suggestions.

The research for this paper was undertaken
while the first author was on a post-doc position at UCLouvain,
in the research group of Pierre-Emmanuel Caprace, and
funded by ERC grant \#278469.
The project was also partially supported by DFG grant MU 1281/5-4.

\begin{bibdiv}
\begin{biblist}

\bib{AB08}{book}{
      author={Abramenko, Peter},
      author={Brown, Kenneth~S.},
       title={Buildings -- theory and applications},
      series={Graduate Texts in Mathematics},
   publisher={Springer},
     address={Berlin},
        date={2008},
      volume={248},
}

\bib{AR09}{article}{
   author={Abramenko, Peter},
   author={R{\'e}my, Bertrand},
   title={Commensurators of some non-uniform tree lattices and Moufang twin
   trees},
   conference={
      title={Essays in geometric group theory},
   },
   book={
      series={Ramanujan Math. Soc. Lect. Notes Ser.},
      volume={9},
      publisher={Ramanujan Math. Soc., Mysore},
   },
   date={2009},
   pages={79--104},
}

\bib{Cap09}{article}{
   author={Caprace, Pierre-Emmanuel},
   title={``Abstract'' homomorphisms of split Kac-Moody groups},
   journal={Mem. Amer. Math. Soc.},
   volume={198},
   date={2009},
   number={924},
   pages={xvi+84},
}

\bib{CR09}{article}{
      author={Caprace, Pierre-Emmanuel},
      author={R{\'e}my, Bertrand},
       title={Groups with a root group datum},
        date={2009},
     journal={Innov. Incidence Geom.},
      volume={9},
       pages={5--77},
}

\bib{CR12}{article}{
      author={Caprace, Pierre-Emmanuel},
      author={R{\'e}my, Bertrand},
       title={Simplicity of twin tree lattices with non-trivial commutation relations},
        date={2012},
       note={To appear in the Proceedings of the special year on Geometric Group Theory at OSU}
}

\bib{HKS72}{article}{
      author={Hering, Christoph},
      author={Kantor, William M.},
      author={Seitz, Gary M.},
       title={Finite groups with a split $BN$-pair of rank $1$. I},
        date={1972},
     journal={J.\ Algebra},
      volume={20},
       pages={435--475},
}

\bib{MKS66}{book}{
   author={Magnus, Wilhelm},
   author={Karrass, Abraham},
   author={Solitar, Donald},
   title={Combinatorial group theory: Presentations of groups in terms of
   generators and relations},
   publisher={Interscience Publishers [John Wiley \& Sons, Inc.], New
   York-London-Sydney},
   date={1966},
   pages={xii+444},
}

\bib{RemyCRAS99}{article}{
      author={R{\'e}my, Bertrand},
       title={Construction de r{\'e}seaux en th{\'e}orie de Kac-Moody},
        date={1999},
     journal={C. R. Acad. Sci. Paris S{\'e}r. I Math.},
      volume={329},
      number={6},
       pages={475--478},
}


\bib{RR06}{article}{
      author={R{\'e}my, Bertrand},
      author={Ronan, Mark~A.},
       title={Topological groups of {K}ac-{M}oody type, right-angled twinnings
  and their lattices},
        date={2006},
     journal={Comment. Math. Helv.},
      volume={81},
      number={1},
       pages={191--219},
         url={http://dx.doi.org/10.4171/CMH/49},
}

\bib{Rob96}{book}{
      author={Robinson, Derek~J.S.},
       title={A course in the theory of groups},
     edition={2},
      series={Graduate Texts in Mathematics},
   publisher={Springer},
     address={Berlin},
        date={1996},
      volume={80},
}

\bib{RT94}{article}{
      author={Ronan, Mark~A.},
      author={Tits, Jacques},
       title={Twin trees. {I}},
        date={1994},
     journal={Invent. Math.},
      volume={116},
      number={1-3},
       pages={463--479},
         url={http://dx.doi.org/10.1007/BF01231569},
}

\bib{Ti77}{article}{
      author={Tits, Jacques},
       title={Endliche Spiegelungsgruppen, die als Weylgruppen auftreten},
        date={1977},
     journal={Invent. Math.},
      volume={43},
      number={3},
       pages={283--295},
}


\bib{Ti89}{incollection}{
    author={Tits, Jacques},
    title={Immeubles jumel\'es (cours 1988--1989)},
    pages={157--172},
    date={1989},
    book={
        title={R{\'e}sum{\'e} de cours},
        series={Documents Math\'ematiques},
        publisher={Soci\'et\'e Math\'ematique de France},
        date={2013},
    }
}

\bib{Ti92}{inproceedings}{
    author={Tits, Jacques},
    title={Twin buildings and groups of Kac-Moody type},
    date={1992},
    pages={249--286},
    book={
        title={Groups, combinatorics and geometry},
        editor={Liebeck, Martin~W.},
        editor={Saxl, Jan},
        series={LMS Lecture Note Series},
        volume={165},
        publisher={Cambridge University Press},
        address={Cambridge},
    }
}

\bib{Ti96}{incollection}{
    author={Tits, Jacques},
    title={Arbres jumel\'es (cours 1995--1996)},
    pages={275--298},
    date={1996},
    book={
        title={R{\'e}sum{\'e} de cours},
        series={Documents Math\'ematiques},
        publisher={Soci\'et\'e Math\'ematique de France},
        date={2013},
    }
}

\bib{TW}{book}{
      author={Tits, Jacques},
      author={Weiss, Richard},
       title={Moufang polygons},
   publisher={Springer},
     address={Berlin},
        date={2002},
}

\end{biblist}
\end{bibdiv}

\end{document}